\documentclass{amsart}
\usepackage{amssymb}
\usepackage{graphicx}
\usepackage{enumerate}
\usepackage{multirow}
\usepackage{amsmath,color}
\usepackage{hyperref}
\usepackage{url}

\newtheorem{theorem}{Theorem}[section]

\newtheorem{proposition}[theorem]{Proposition}

\newtheorem{lemma}[theorem]{Lemma}
\newtheorem{corollary}[theorem]{Corollary}

\def\R{\mbox{$\mathbb{R}$}}
\def\F{\mbox{$\mathbb{F}$}}

\newcommand{\ep}{\epsilon}
\newcommand{\la}{\lambda}
\newcommand{\G}{\Gamma}

\newcommand{\AL}{\mathbb{A}}

\def\rk{\mathop{\rm rk }\nolimits}
\def\dist{\mathop{\rm dist }\nolimits}

\begin{document}

\title{Geometric aspects of $2$-walk-regular graphs}

\author{Marc C\'amara}
\address{Department of Econometrics and O.R., Tilburg University,
	P.O. Box 90153, 5000 LE Tilburg, The Netherlands}
\email{marc.camara@gmail.com}
\author{Edwin R. van Dam}
\address{Department of Econometrics and O.R., Tilburg University,
	P.O. Box 90153, 5000 LE Tilburg, The Netherlands}
\email{Edwin.vanDam@uvt.nl}
\author{Jack H. Koolen}
\address{School of Mathematical Sciences,
University of Science and Technology of China, 96 Jinzhai Road, Hefei, 230026, Anhui, P.R. China}
\email{koolen@postech.ac.kr}
\author{Jongyook Park}
\address{School of Mathematical Sciences,
University of Science and Technology of China,
96 Jinzhai Road, Hefei, 230026, Anhui, P.R. China}
\email{jongyook@hanmail.net}
\thanks{The work of Marc C\'amara was (partly) financed by the Netherlands Organisation for Scientific Research (NWO) and by the Ministerio de Educaci\'{o}n y Ciencia of Spain under project MTM2011-28800-C02-01.
This work was done while Jongyook Park was a postdoctoral fellow at Tilburg University, (partly) financed by the
mathematics cluster DIAMANT of the Netherlands Organisation for Scientific Research (NWO), and while Jack Koolen was
visiting Tilburg University and Tohoku University, supported by an NWO visitor grant and a JSPS grant, respectively.
Jack Koolen was also partially supported by a grant of the `One hundred talent program' of the Chinese government \medskip\\
This version is published in Linear Algebra and its Applications 439 (2013), 2692-2710.}

\subjclass[2010]{05E30, 05C50}

\keywords{walk-regular graphs, distance-regular graphs, geometric graphs}

\begin{abstract}A $t$-walk-regular graph is a graph for which the number of walks of given length between two vertices
depends only on the distance between these two vertices, as long as this distance is at most $t$. Such graphs
generalize distance-regular graphs and $t$-arc-transitive graphs. In this paper, we will focus on $1$- and in
particular $2$-walk-regular graphs, and study analogues of certain results that are important for distance-regular
graphs. We will generalize Delsarte's clique bound to $1$-walk-regular graphs, Godsil's multiplicity bound and
Terwilliger's analysis of the local structure to $2$-walk-regular graphs. We will show that $2$-walk-regular graphs
have a much richer combinatorial structure than 1-walk-regular graphs, for example by proving that there are finitely
many non-geometric $2$-walk-regular graphs with given smallest eigenvalue and given diameter (a geometric graph is the
point graph of a special partial linear space); a result that is analogous to a result on distance-regular graphs.
Such a result does not hold for $1$-walk-regular graphs, as our construction methods will show.
\end{abstract}

\maketitle

\section{Introduction}
Walk-regular graphs were introduced by Godsil and McKay \cite{Godsil198051} in their study of cospectral graphs. They
showed that the property that the vertex-deleted subgraphs of a graph $\G$ are all cospectral is equivalent to the
property that the number of closed walks of a given length $\ell$ in $\G$ is independent of the starting vertex, for
every $\ell$. They also observed that walk-regular graphs generalize both vertex-transitive graphs and distance-regular
graphs. Distance-regular graphs \cite{bcn89,DKT13} play a crucial role in the area of algebraic combinatorics, and it
was shown by Rowlinson \cite{R} that such graphs can be characterized in terms of the numbers of walks between two
vertices; in particular that this number only depends on their length and the distance between the two vertices.
Motivated by this characterization, Dalf\'o, Fiol, and Garriga \cite{fg07,DFG09} introduced $t$-walk-regular graphs;
such graphs have the property of Rowlinson's characterization at least for those vertices that are at distance at most
$t$. These $t$-walk-regular graphs were further studied by Dalf\'o, Fiol, and coauthors
\cite{DFGtcliques,DFG10,DDF11,DvDFGG11}. Dalf\'o, Van Dam, and Fiol \cite{DDF11} characterized $t$-walk-regular graphs
in terms of the cospectrality of certain perturbations, thus going back to the roots of walk-regular graphs. Dalf\'o,
Van Dam, Fiol, Garriga, and Gorissen \cite{DvDFGG11} among others raised the question of when $t$-walk-regularity
implies distance-regularity.

Our motivation for studying $t$-walk-regular graphs lies in the generalization of distance-regular graphs. In order to
better understand the latter, we would like to know which results for these graphs can be generalized to
$t$-walk-regular graphs. In this way, we aim to have a better understanding of which properties of distance-regular
graphs are most relevant.

Here we will focus on $1$- and in particular $2$-walk-regular graphs. We will for example generalize Delsarte's clique
bound \cite{D73} to $1$-walk-regular graphs. It seems however that $1$-walk-regularity is still far away from
distance-regularity, but going to $2$-walk-regularity is an important step (or jump) forward. Indeed, we will see that
several important results on distance-regular graphs have interesting generalizations to $2$-walk-regular graphs (but
not to $1$-walk-regular graphs), such as Godsil's multiplicity bound \cite{g88} and Terwilliger's analysis of the local
structure \cite{T86}. On the other hand, there are very basic construction methods for $1$-walk-regular graphs that
cannot be generalized to $2$-walk-regular graphs; indeed, most known examples of the latter come from groups as graphs
that are obtained in an elementary way (such as the line graph and halved graph) from $s$-arc-transitive graphs. We
will indeed show that $2$-walk-regular graphs have a much richer combinatorial structure than 1-walk-regular graphs. We
will show that there are finitely many non-geometric $2$-walk-regular graphs with given smallest eigenvalue and given
diameter (a geometric graph is the point graph of a special partial linear space); a result that is analogous to a
result on distance-regular graphs. In fact, this result shows that the class of $2$-walk-regular graphs is quite
limited. Again, such a result does not hold for $1$-walk-regular graphs, as our construction methods (Proposition
\ref{prop: kron}, in particular) will show.

This paper is organized as follows: in the next section, we give some technical background. In Section \ref{sec:
constructions}, we give elementary construction methods for $t$-walk-regular graphs that we will use in the remaining
sections. In Section \ref{sec:godsil}, Godsil's multiplicity bound for distance-regular graphs is generalized to
$2$-walk-regular graphs. Similarly we generalize in Section \ref{sec:terwilliger} Terwilliger's analysis of local
graphs. In Section \ref{sec:smallmult}, we study $t$-walk-regular graphs with an eigenvalue with small multiplicity.
Finally, in Section \ref{sec: geometric}, we generalize Delsarte's clique bound and study geometric $2$-walk-regular
graphs.

\section{Preliminaries}
Let $\Gamma$ be a connected graph with vertex set $V=V(\Gamma)$ and denote $x\sim y$ if the vertices $x,y\in V$ are
adjacent. The {\em distance} $\mbox{dist}_{\Gamma}(x,y)$ between two vertices $x,y\in V$ is the length of a shortest
path connecting $x$ and $y$ (we omit the index $\G$ when this is clear from the context). The maximum distance between
two vertices in $\Gamma$ is the {\em diameter} $D=D(\Gamma)$. We use $\Gamma_i(x)$ for the set of vertices at distance
$i$ from $x$ and write, for the sake of simplicity, $\Gamma(x):=\Gamma_1(x)$. The {\em degree} of $x$ is the number
$|\Gamma(x)|$ of vertices adjacent to it. A graph is {\em regular} with {\em valency} $k$ if the degree of each of its
vertices is $k$.

A graph $\Gamma$ is called {\em bipartite} if it has no odd cycle. For a connected graph $\Gamma$,
the {\em bipartite double} $\widetilde{\Gamma}$ of $\Gamma$ is the graph whose vertices are the symbols $x^{+},x^{-} (x\in V)$ and where $x^+$ is adjacent to $y^-$ if and only of $x$ is adjacent to $y$ in $\G$.

Let $\overline{\G}$ be a graph with vertex set $V(\overline{\G})$. Let $\Gamma$ be a graph whose vertices are
partitioned in $|V(\overline{\G})|$ classes of the same size. We say that $\G$ is a {\em cover} of $\overline{\G}$ if
the following three properties hold: The vertices of each class induce an empty graph in $\Gamma$; the classes give an
equitable partition in $\Gamma$ (that is, for every two classes, every vertex in one of these classes has the same
number of neighbors in the other class); and the quotient graph provided by the classes (that is, the graph on the
classes, where two classes are adjacent if there are edges (of $\G$) between them) is isomorphic to $\overline{\G}$.
This quotient graph is also called the folded graph of $\G$.

Given a graph $\G$ and $x\in V$, the {\em local graph $\Delta(x)$ at vertex $x$} is the subgraph of $\G$ induced on the
vertices that are adjacent to $x$. When all the local graphs are isomorphic, we simply write $\Delta$, and say that
$\G$ is locally $\Delta$.

For a connected graph $\Gamma$ with diameter $D$, the {\em distance-$i$ graph} $\Gamma_i$ of $\Gamma$ $(1\leq i\leq D)$
is the graph whose vertices are those of $\Gamma$ and whose edges are the pairs of vertices at mutual distance $i$ in
$\Gamma$. In particular, $\Gamma_1=\Gamma$. The {\em distance-$i$ matrix} $A_i=A_i(\Gamma)$ is the matrix whose rows
and columns are indexed by the vertices of $\Gamma$ and the ($x, y$)-entry is $1$ whenever $\mbox{dist}(x,y)=i$ and
$0$ otherwise. The {\em adjacency matrix} $A$ of $\Gamma$ equals $A_1$.

The {\em eigenvalues} of the graph $\Gamma$ are the eigenvalues of $A$. We use $\{\theta_0>\cdots>\theta_d\}$ for the
set of distinct eigenvalues of $\Gamma$. The {\em multiplicity} of an eigenvalue $\theta$ is denoted by $m(\theta)$.
Note that if $\Gamma$ is connected and regular with valency $k$, then $\theta_0=k$ and $m(\theta_0)=1$. Let
$\{v_1,\ldots,v_{m(\theta)}\}$ be an orthonormal basis of eigenvectors with eigenvalue $\theta$, and let $U$ be a
matrix whose columns are these vectors. Then the matrix $E_{\theta}=UU^{\top}$ is called a {\em minimal idempotent}
associated to $\theta$. We abbreviate $E_{\theta_i}$ by $E_i$ ($i=0,\dots,d$).

Fiol and Garriga \cite{fg07} introduced $t$-walk-regular graphs as a generalization of both distance-regular and
walk-regular graphs. A graph is {\em $t$-walk-regular} if the number of walks of every given length $\ell$ between two
vertices $x,y\in V$ only depends on the distance between them, provided that $\mbox{dist}(x,y)\leq t$ (where it is
implicitly assumed that the diameter of the graph is at least $t$). The `Spectral Decomposition Theorem' leads
immediately to
$$
A^{\ell}=\sum_{i=0}^d \theta_i^{\ell}E_i.
$$
From that, we obtain that a graph is $t$-walk-regular if and only if for every minimal idempotent the $(x,y)$-entry
only depends on $\mbox{dist}(x,y)$, provided that the latter is at most $t$ (see Dalf\'o, Fiol, and Garriga
\cite{DFG09}). In other words, for a fixed eigenvalue $\theta$ with minimal idempotent $E$, there exist constants
$\alpha_j:=\alpha_j(\theta)$ ($0\leq j\leq t$), such that $A_j\circ E=\alpha_j A_j$, where $\circ$ is the entrywise
product.

If a graph comes from one of the relations in an association scheme (see Brouwer, Cohen, and Neumaier \cite{bcn89}),
then the minimal idempotents of the graph as described above do not have to be the same as the minimal idempotents in
the scheme. Mainly for this purpose, we will develop our theory somewhat more general than seems necessary at first. In
particular, we define a {\em $t$-walk-regular idempotent} as a nonzero idempotent $E$ such that $A_j\circ E=\alpha_j
A_j$ for certain constants $\alpha_j$ ($0\leq j\leq t$). We call $E$ a {\em $t$-walk-regular idempotent for eigenvalue
$\theta$} if moreover $AE=\theta E$ holds.

If $E$ is a $t$-walk-regular idempotent with rank $m$, then clearly its diagonal elements $\alpha_0$ are positive, and
we can write $E=UU^{\top}$, where the $m$ columns of $U$ form an orthonormal basis of the eigenspace of $E$ for its
eigenvalue $1$ (note that we do not explicitly require $U$ to contain eigenvectors of $A$, as we did
above). For every vertex $x\in V$ we now denote by $\hat{x}$ the $x$-th row of $U$. The map $x\mapsto \hat{x}$ is
called a {\em representation} of $\Gamma$. Note that the vectors $\hat{x}$ ($x\in V)$ all have the same length (the
square of which is $E_{xx}=\alpha_0$); in this case we call the representation {\em spherical}. Given two vertices
$x,y\in V$, we will often refer to $u_{xy}:=E_{xy}/\alpha_0$ as the {\em $xy$-cosine}, as it can be interpreted as the
cosine of the angle between the vectors $\hat{x}$ and $\hat{y}$. We remark that if $\Gamma$ is $t$-walk-regular and
$\dist(x,y)=s\leq t$, then $u_{xy}=\alpha_s/\alpha_0$ does not depend on $x$ and $y$, but only on $s$. In this case, we
write $u_s:=\alpha_s/\alpha_0$.

Given a vertex $x$ in a graph $\Gamma$ and a vertex $y$ at distance $j$ from $x$, we consider the numbers
$a_j(x,y)=|\Gamma(y)\cap \Gamma_j(x)|$, $b_j(x,y)=|\Gamma(y)\cap \Gamma_{j+1}(x)|$, and
$c_j(x,y)=|\Gamma(y)\cap\Gamma_{j-1}(x)|$. A graph $\Gamma$ with diameter $D$ is {\em distance-regular} if these
parameters do not depend on $x$ and $y$, but only on $j$, for $0\leq j\leq D$. If this is the case then these numbers
are denoted simply by $a_j$, $b_j$, and $c_j$, for $0\leq j\leq D$, and they are called the {\em intersection numbers}
of $\Gamma$. Also, if a graph $\Gamma$ is $t$-walk-regular, then the intersection numbers are well-defined for $0\leq
j\leq t$, as they do not depend on $x$ nor on the chosen $y\in \Gamma_j(x)$ (see Dalf\'o et al.
\cite[Prop.~3.15]{DvDFGG11}). More generally, let $x$ and $y$ be two vertices at distance $h$ in a $t$-walk-regular
graph. Then the numbers $p^h_{ij}=|\Gamma_i(x)\cap\Gamma_j(y)|$ exist (i.e., they only depend on $h$, $i$ and $j$) for
nonnegative integers $h,i,j \leq t$. This follows from working out the product $A_iA_j \circ A_h$, for example; see
also Dalf\'o, Fiol, and Garriga \cite[Prop.~1]{DFG10}. Moreover, if $k_h=|\Gamma_h(x)|$, then relations such as $k_h
p^h_{ij} = k_i p^i_{hj}$ hold. From the above it is clear that a $D$-walk-regular graph is distance-regular.
For such a graph (and a minimal idempotent for $\theta$), the sequence $(u_0,\ldots,u_D)$ is known
as the {\em standard sequence} of $\Gamma$ with respect to $\theta$.

Now let $E=UU^{\top}$ be a $t$-walk-regular idempotent for eigenvalue $\theta$, with $t \geq 1$. From
$AE=\theta E$ and $U^{\top}U=I$, it follows that $AU=\theta U$, so in this case the columns of $U$ are clearly also
eigenvectors of $A$ (even though we did not require this in the definition; note also that $U$ does not necessarily
contain a basis of the eigenspace of $A$ for eigenvalue $\theta$). For the corresponding representation, this implies
that
\begin{equation}\label{eq: representation lambda}
\theta \hat{x}=\sum_{y\sim x} \hat{y}.
\end{equation}
By looking at an $(x,y)$-entry with $\mbox{dist}(x,y)=j$ in the equation $A E=\theta E$ we obtain the following
relations:
\begin{align}
k\alpha_1&=\theta \alpha_0\label{eq: relation alpha0 alpha1}\\
c_j\alpha_{j-1}+a_j\alpha_j+b_j\alpha_{j+1}&=\theta\alpha_j \qquad (1\leq j\leq t-1).\label{eq: relations alpha's}
\end{align}
In particular, $\theta$ follows from the first cosine (given $k$; and the other way around).

\section{Construction methods}\label{sec: constructions}
Highly symmetric examples of $t$-walk-regular graphs exist for $t \leq 7$ in the form of $t$-arc-transitive graphs. For
example, the infinite family of $3$-arc-transitive graphs constructed by Devillers, Giudici, Li, and Praeger
\cite{DGHP12} is also an infinite family of $3$-walk-regular graphs. Indeed, every $t$-arc-transitive graph with
diameter at least $t$ is $t$-walk-regular. By a covering construction due to Conway (see \cite[Ch.~19]{Biggs74}) and
independently Djokovi\'{c} \cite{Djokovic}, infinite families of $5$-arc-transitive graphs with valency $3$ and
$7$-arc-transitive graphs with valency $4$ were constructed. Conder and Walker \cite{CW98} also constructed infinitely
many $7$-arc-transitive graphs with valency $4$. In turn, these give rise to infinite families of cubic
$5$-walk-regular graphs and $7$-walk-regular graphs with valency $4$. The validity of the Bannai-Ito conjecture
\cite{BDKM09} (in particular the fact that there are finitely many distance-regular graphs with valency four
\cite{BK99}) for example implies that there are infinitely many $7$-walk-regular graphs that are not distance-regular.

It is worth mentioning that less-known (and less restrictive) concepts such as $t$-geodesic-transitivity and
$t$-distance-transitivity have been introduced by Devillers, Jin, Li, and Praeger \cite{DJLP11}, and both concepts are
stronger than $t$-walk-regularity.

It is rather straightforward to show that the bipartite double of a $t$-arc-transitive graph is again
$t$-arc-transitive. This could for example be applied to the infinite family of non-bipartite $2$-arc-transitive graphs
constructed by Nochefranca \cite{N91}, to obtain also an infinite family of bipartite such graphs. For $t$-walk-regular
graphs, a similar result holds, but we have to take into account the odd-girth (note that for $t$-arc-transitive graphs
with diameter at least $t$, the odd-girth is at least $2t+1$).

\begin{proposition}\label{prop: bipartitedouble} Let $\Gamma$ be a $t$-walk-regular graph with odd-girth $2s+1$. Then
the bipartite double of $\Gamma$ is $\min\{s,t\}$-walk-regular.
\end{proposition}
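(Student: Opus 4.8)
The plan is to relate walks in the bipartite double $\widetilde{\Gamma}$ to walks in $\Gamma$ and then use the walk-regularity characterization together with the odd-girth hypothesis. Recall that $A(\widetilde{\Gamma})$ is the bipartite adjacency matrix built from $A = A(\Gamma)$; more precisely, after ordering the vertices as $x^+$ first and $x^-$ second, $A(\widetilde{\Gamma}) = \begin{pmatrix} 0 & A \\ A & 0 \end{pmatrix}$. Hence $A(\widetilde{\Gamma})^{2\ell} = \begin{pmatrix} A^{2\ell} & 0 \\ 0 & A^{2\ell}\end{pmatrix}$ and $A(\widetilde{\Gamma})^{2\ell+1} = \begin{pmatrix} 0 & A^{2\ell+1} \\ A^{2\ell+1} & 0\end{pmatrix}$. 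So the number of walks in $\widetilde{\Gamma}$ of length $\ell$ between two vertices is either $0$ or equals the number of walks of length $\ell$ in $\Gamma$ between the corresponding vertices of $\Gamma$, depending on parity and on the signs.

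Next I would pin down the distances in $\widetilde{\Gamma}$. The key fact, valid when $\Gamma$ has odd-girth $2s+1$, is that for $\mathrm{dist}_\Gamma(x,y) = j \le s$ one has $\mathrm{dist}_{\widetilde\Gamma}(x^+, y^+) = j$ if $j$ is even and $\mathrm{dist}_{\widetilde\Gamma}(x^+, y^-) = j$ if $j$ is odd, while the "wrong parity" pair $x^+,y^-$ (resp. $x^+,y^+$) is at distance strictly greater than $s$ (in fact, a path of length $\le s$ of the wrong parity would produce, together with a shortest path in $\Gamma$, a closed walk in $\Gamma$ of odd length $< 2s+1$, contradicting the odd-girth). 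I would prove this carefully since it is the crux. Set $r = \min\{s,t\}$. Then for any two vertices $\tilde x, \tilde y$ of $\widetilde\Gamma$ at distance $h \le r$: the underlying vertices $x,y$ of $\Gamma$ satisfy $\mathrm{dist}_\Gamma(x,y) = h$ (one inequality is immediate by projecting a path down; the other follows from the odd-girth argument ruling out the short wrong-parity connection), and the sign pattern of $\tilde x, \tilde y$ is the one forced by the parity of $h$.

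Finally I would assemble these. Fix a walk-length $\ell$ and two vertices $\tilde x,\tilde y$ of $\widetilde\Gamma$ with $\mathrm{dist}_{\widetilde\Gamma}(\tilde x,\tilde y) = h \le r$. If $\ell \not\equiv h \pmod 2$ the relevant off-diagonal/diagonal block of $A(\widetilde\Gamma)^\ell$ forces the walk count to be $0$ (this is consistent: the walk count depends only on $h$). If $\ell \equiv h \pmod 2$, then the number of $\ell$-walks in $\widetilde\Gamma$ between $\tilde x$ and $\tilde y$ equals the number of $\ell$-walks in $\Gamma$ between $x$ and $y$, and since $\Gamma$ is $t$-walk-regular and $\mathrm{dist}_\Gamma(x,y) = h \le r \le t$, this number depends only on $h$ and $\ell$. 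In both cases the count depends only on $h$ and $\ell$, so $\widetilde\Gamma$ is $r$-walk-regular, i.e. $\min\{s,t\}$-walk-regular. The main obstacle is the distance computation in $\widetilde\Gamma$ up to distance $s$: one must show both that the projection of a geodesic is a geodesic and, more delicately, that a short path of the wrong parity cannot exist, which is exactly where the odd-girth bound $2s+1$ is used (and where the $\min$ with $t$ is unavoidable once $s$ could be smaller than $t$).
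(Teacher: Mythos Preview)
Your proof is correct and follows essentially the same route as the paper: both hinge on the same odd-girth computation showing that, for $h\le r=\min\{s,t\}$, two vertices of $\widetilde\Gamma$ at distance $h$ project to vertices of $\Gamma$ at distance $h$ (with the sign pattern forced by the parity of $h$). The only cosmetic difference is that you work directly with the walk-count definition via the block form of $A(\widetilde\Gamma)^\ell$, whereas the paper passes through the idempotent characterization, writing the idempotents of $\widetilde\Gamma$ as $\tfrac12\bigl(\begin{smallmatrix}E_i&\pm E_i\\\pm E_i&E_i\end{smallmatrix}\bigr)$ and the distance-$\ell$ matrices of $\widetilde\Gamma$ in block form in terms of $N_\ell$, then taking entrywise products; the two arguments are equivalent reformulations of one another.
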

\begin{proof} Let $r=\min\{s,t\}$, and let $N$ be the adjacency matrix of $\G$, with minimal idempotents $E_i$
($i=0,\dots,d$). Then the bipartite double $\widetilde{\G}$ of $\G$ has adjacency matrix $$A=\begin{bmatrix}O & N\\
N & O\end{bmatrix}$$ with (not necessarily minimal) idempotents
$$
\frac12\begin{bmatrix}E_i & \pm E_i\\
\pm E_i & E_i\end{bmatrix}\quad(i=0,\dots,d).
$$ Combinatorially it means that for every vertex $x$ of $\G$, there are two
vertices, $x^+, x^-$, of $\widetilde{\G}$ (one in each of the stable sets) and $x^+$ is adjacent to $y^-$ in
$\widetilde{\G}$ if and only if $x$ is adjacent to $y$ in $\G$.

We will now show that each distance matrix of $\widetilde{\G}$ can be expressed in terms of the corresponding distance
matrix of $\G$, at least up to distance $r$. Indeed, first of all, if $\ell$ is even and $\dist_{\G}(x,y)=\ell$, then
also $\dist_{\widetilde{\G}}(x^{\ep},y^{\ep})=\ell$, where the symbol $\ep$ can be $+$ or $-$. If moreover $\ell \leq
r$, then $\dist_{\widetilde{\G}}(x^{\ep},y^{-\ep}) \geq 2s+1-\ell \geq r+1$ (the first inequality follows because
otherwise there would be a walk between $x$ and $y$ in $\G$ of odd length less than $2s+1-\ell$, which together with
the walk of even length $\ell$ would give a closed walk of odd length less than $2s+1$, a contradiction). Similarly, it
follows that if $\ell$ is odd and $\dist_{\G}(x,y)=\ell$, then also $\dist_{\widetilde{\G}}(x^{\ep},y^{-\ep})=\ell$,
and if moreover $\ell \leq r$, then $\dist_{\widetilde{\G}}(x^{\ep},y^{\ep}) \geq r+1$. Altogether this shows that for
$\ell \leq r$ we have that
$$
A_{\ell}=\begin{bmatrix}O & N_{\ell}\\
N_{\ell} & O\end{bmatrix} \quad(\ell \text{ odd) \quad and \quad} A_{\ell}=\begin{bmatrix}N_{\ell} & O\\
O & N_{\ell} \end{bmatrix}\quad (\ell \text{ even)},
$$
where $N_{\ell}$ is the distance-$\ell$ matrix of $\G$. By taking entrywise products of these matrices with the
idempotents, the result now follows.
\end{proof}

The graph on the flags of the $11$-point biplane as described by Dalf\'o, Van Dam, Fiol, Garriga, and Gorissen
\cite{DvDFGG11} and characterized by Blokhuis and Brouwer \cite{BB12} (see also Figure \ref{fig: F11 flag graph}) is
$3$-walk-regular with odd-girth $5$, so its bipartite double is $2$-walk-regular (and it is not $3$-walk-regular).
Proposition \ref{prop: bipartitedouble} also shows that the bipartite double of the dodecahedral graph is
$2$-walk-regular because the dodecahedral graph is $5$-walk-regular with odd-girth $5$. This bipartite double is even
$3$-walk-regular, however.

\begin{figure}
\begin{center}
\includegraphics[width=100mm]{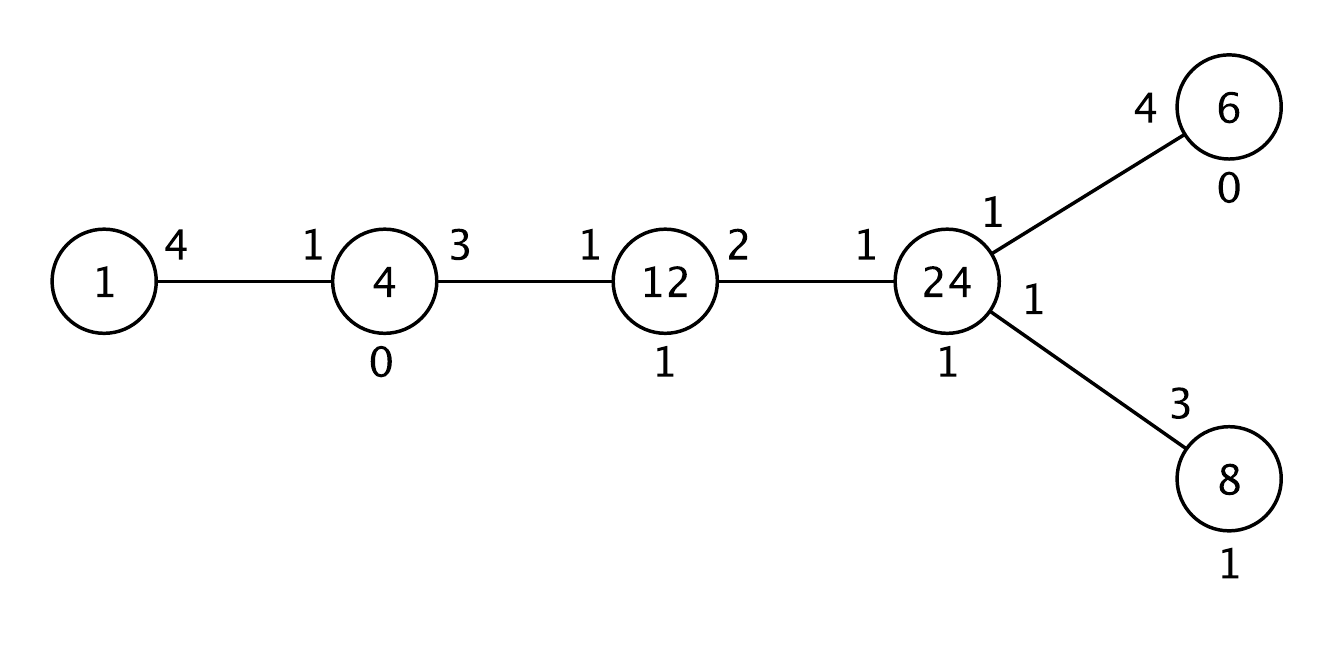}
\caption{Distance distribution diagram of a graph on the flags of the eleven point biplane}
\label{fig: F11 flag graph}
\end{center}
\end{figure}

The following result is in some sense going in the opposite direction.

\begin{proposition}\label{prop: distance2graph}
Let $t \geq 2$, and let $\Gamma$ be a $t$-walk-regular graph with valency $k$ and odd-girth $2s+1$. If $\G$ is not
complete multipartite, then the distance-$2$ graph $\G_2$ of $\G$ is $\min\{\lfloor s/2 \rfloor,\lfloor t/2
\rfloor\}$-walk-regular.
\end{proposition}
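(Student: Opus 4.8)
The plan is to mimic the proof of Proposition~\ref{prop: bipartitedouble}: exhibit idempotents for $\G_2$ built from those of $\G$, and show that, up to the claimed distance, each distance matrix of $\G_2$ is a polynomial (with constant coefficients) in the distance matrices $N_0,N_1,\dots$ of $\G$. Let $r=\min\{\lfloor s/2\rfloor,\lfloor t/2\rfloor\}$. First I would record the spectral side: if $N$ is the adjacency matrix of $\G$ with eigenvalues $\eta$ and minimal idempotents $F_\eta$, then the adjacency matrix of $\G_2$ is $A=N_2$, and since $\G$ is $2$-walk-regular we have $N^2 = N_0\circ N^2 + N_1\circ N^2 + N_2\circ N^2 = kI + a_1 N + N_2$ (here $N_1\circ N^2 = a_1 N$ and $N_0\circ N^2 = kI$, using that the intersection numbers $c_1=1$, $a_1$, $b_1$ are well-defined for a $2$-walk-regular graph). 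Hence $N_2 = N^2 - a_1 N - kI$ is a polynomial in $N$, so the $F_\eta$ are eigenvectors of $N_2$ with eigenvalue $\eta^2 - a_1\eta - k =: \mu$. Grouping the $F_\eta$ for which $\mu$ takes a common value gives (not necessarily minimal) idempotents $E$ of $\G_2$; we want to show $A_j(\G_2)\circ E = \alpha_j A_j(\G_2)$ for $0\le j\le r$.

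The combinatorial heart is the comparison of distances. The key claim is: for $x,y\in V$ with $\ell := \dist_\G(x,y)$, if $\ell\le 2r$ and $\ell$ is even, then $\dist_{\G_2}(x,y)=\ell/2$; and there is no pair at $\G_2$-distance $\le r$ whose $\G$-distance is odd (or larger than $2r$ in a way that would collide). One direction is trivial: a shortest $xy$-path in $\G$ of even length $\ell$ yields an $xy$-walk in $\G_2$ of length $\ell/2$, so $\dist_{\G_2}(x,y)\le \ell/2$. For the reverse inequality and the parity statement, suppose $\dist_{\G_2}(x,y)=m\le r$. Walking $m$ steps in $\G_2$ gives an $xy$-walk in $\G$ of length exactly $2m$, in particular of even length. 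If $\dist_\G(x,y)$ were odd, this even walk together with a shortest (odd) $xy$-path would give a closed walk of odd length $2m+\dist_\G(x,y) \le 2r + \dist_\G(x,y)$; since $\G$ is not complete multipartite it has odd-girth $2s+1\ge 5$, and one checks $2m+\dist_\G(x,y) < 2s+1$ when $\dist_\G(x,y)\le r$ — but we must first rule out $\dist_\G(x,y)$ being odd and \emph{large}. If $\dist_\G(x,y)$ is odd and $>r$, then on the $\G$-walk of length $2m\le 2r$ realizing the $\G_2$-path there is nonetheless an $xy$-walk of length $2m$, forcing $\dist_\G(x,y)\le 2m\le 2r$; combined with oddness and the odd-girth bound $2s+1 > 2r + r$ one gets a contradiction. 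So $\dist_\G(x,y)$ is even, say $=\ell$; then $\ell \le 2m \le 2r$, and since $\dist_{\G_2}(x,y)\ge \lceil \ell/2\rceil = \ell/2$, we get $m = \ell/2$. The condition ``$\G$ not complete multipartite'' is exactly what guarantees $s\ge 2$ (equivalently, odd-girth $\ge 5$ or $\G$ triangle-free-enough), without which $\G_2$ need not even be connected nicely; I would state this dependence explicitly.

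Granting the claim, I conclude as in the previous proposition: for $0\le j\le r$, the distance-$j$ matrix of $\G_2$ satisfies
$$
A_j(\G_2) = N_{2j},
$$
the distance-$2j$ matrix of $\G$ (as $2j\le 2r\le s \le$ diameter, this is well-defined and, by $t$-walk-regularity with $2j\le 2r\le t$, behaves regularly). Taking the entrywise product of the idempotent $E=\sum_{\mu} F_\eta$ (sum over $\eta$ with $\eta^2-a_1\eta-k=\mu$) with $A_j(\G_2)=N_{2j}$: since $\G$ is $t$-walk-regular and $2j\le t$, each $F_\eta\circ N_{2j} = \alpha_{2j}(\eta) N_{2j}$, and the sum over a fixed-$\mu$ class — while not obviously collapsing to a single scalar times $N_{2j}$ — does collapse, because $E\circ A_j(\G_2)$ has all its nonzero entries (those at $\G_2$-distance exactly $j$, i.e. $\G$-distance $2j$) equal, as $E$ itself, being a sum of idempotents of the $2$-walk-regular graph $\G$, has constant entries on each $N_\ell$ with $\ell\le t$. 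Hence $A_j(\G_2)\circ E = \alpha_j A_j(\G_2)$ with $\alpha_j := \alpha_{2j}$ the common value. Thus $E$ is an $r$-walk-regular idempotent of $\G_2$ for every eigenvalue-class, and $\G_2$ is $r$-walk-regular.

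The main obstacle I anticipate is the distance-comparison claim in the second paragraph, specifically handling the ``odd and large'' case cleanly and pinning down exactly which bound among $2s+1$, $\lfloor s/2\rfloor$, $\lfloor t/2\rfloor$ is binding — the floors in $r=\min\{\lfloor s/2\rfloor,\lfloor t/2\rfloor\}$ strongly suggest that the sharp inequality one needs is $2\cdot 2r \le 2s$ (so that a $\G_2$-path of length $r$ corresponds to a $\G$-walk of length $\le 2s$, too short to force an odd closed walk below the odd-girth), together with $2r\le t$ to keep the $\G$-side intersection numbers and idempotent entries regular. A secondary, more routine point is verifying that distinct eigenvalue-classes of $\G_2$ really do arise from disjoint sets of $\G$-eigenvalues in a way that makes the $E$'s genuine idempotents (this is immediate since $N_2$ is a polynomial in $N$).
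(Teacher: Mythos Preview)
Your approach is essentially the paper's: write $N_2$ as a polynomial in $N$ so that the minimal idempotents of $\G$ serve as idempotents for $\G_2$, then prove $B_i=N_{2i}$ for $i\le r$ via an odd-girth argument. Two corrections are needed.

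First, a minor slip: $N_2\circ N^2=c_2 N_2$, not $N_2$, so $N_2=\tfrac{1}{c_2}(N^2-a_1N-kI)$. This does not affect the logic, only the formula. Your distance-comparison argument is also unnecessarily tangled: once you observe that $\dist_{\G_2}(x,y)=m\le r$ gives a $\G$-walk of length $2m$, you already have $\dist_\G(x,y)\le 2m\le 2r$, so there is no ``odd and large'' case to rule out; the odd closed walk then has length at most $4m\le 4r\le 2s<2s+1$, and you are done. The paper organizes this more cleanly by starting from $\dist_\G$ and splitting into the three cases $\dist_\G(x,y)=2j\le 2r$, $\dist_\G(x,y)=2j+1<2r+1$, and $\dist_\G(x,y)\ge 2r+1$.

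The genuine gap is the role of the hypothesis ``$\G$ is not complete multipartite.'' You claim it forces $s\ge 2$, but this is false: plenty of $2$-walk-regular graphs with triangles (odd-girth $3$, $s=1$) are not complete multipartite --- the icosahedron, for one. The hypothesis is there to guarantee that $\G_2$ is \emph{connected}, and this requires a real argument, which you do not supply. The paper shows that if $\G_2$ were disconnected then the largest eigenvalue of $B=\tfrac{1}{c_2}(A^2-a_1A-kI)$ would have multiplicity at least two, forcing $a_1-k$ to be an eigenvalue of $\G$; since $\G$ is not bipartite this gives $a_1\neq 0$, and one then argues (using $p^1_{22}$ and the inequality $a_1\le 2a_2$ when the diameter is at least three) that $\G$ must be complete multipartite. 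Without this, the conclusion ``$\G_2$ is $r$-walk-regular'' is not even well-posed.
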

\begin{proof}
Let $r=\min\{\lfloor s/2 \rfloor,\lfloor t/2 \rfloor\}$, and let $A$ be the adjacency matrix of $\G$. Because $\G$ is
$2$-walk-regular, the adjacency matrix $B=A_2$ of $\G_2$ satisfies $$B=\frac1{c_2}(A^2-a_1A-kI)$$ and hence it has the
minimal idempotents of $A$ as (not necessarily minimal) idempotents. This, together with the claim that $B_{i}=A_{2i}$
for $i=0,\dots,r$ proves the desired result, as long as $\G_2$ is connected.

In order to prove this claim, consider two arbitrary vertices $x$ and $y$ of $\G$. If $\dist_{\G}(x,y)=2j \leq 2r$,
then $\dist_{\G_2}(x,y)=j$. If $\dist_{\G}(x,y)=2j+1<2r+1$, then $\dist_{\G_2}(x,y) \geq 2r-j>r$ (where the first
inequality follows because otherwise this would give a walk in $\G$ between $x$ and $y$ of even length less than
$2(2r-j)$, which together with the odd walk of length $2j+1$ gives a closed walk of odd length less than $4r+1$, a
contradiction). Finally, also if $\dist_{\G}(x,y) \geq 2r+1$, then $\dist_{\G_2}(x,y) >r$, which shows the claim.

The assumption that $\G$ is not complete multipartite nor bipartite (because it has finite odd-girth) ensures that
$\G_2$ is connected. Indeed, if $\G_2$ is not connected, then by using that the largest eigenvalue of
$B=\frac1{c_2}(A^2-a_1A-kI)$ occurs with multiplicity at least two, we find that $a_1-k$ must be an eigenvalue of $\G$.
Because $\G$ is not bipartite, this implies that $a_1 \neq 0$. We may also assume that $a_2=0$, because otherwise
$p^1_{22} \neq 0$, which implies that $\G_2$ is connected. Thus, if the diameter of $\G$ is two, then $\G$ is complete
multipartite. But if the diameter is at least three, then consider vertices $x$ and $y_i$ $(i=1,2,3)$ such that
$y_i\in\G_i(x)$ and $y_1\sim y_2\sim y_3$. Then
$0<a_1=|\G(y_2)\cap\G(y_3)|\leq|\G_2(x)\cap\G(y_2)|+|\G_2(y_1)\cap\G(y_3)|=2a_2=0$ (cf.~\cite[Prop.~5.5.1]{bcn89}), a
contradiction.
\end{proof}

For example, the distance-2 graph of the dodecahedral graph is $1$-walk-regular but not $2$-walk-regular. Another
example comes from the Biggs-Smith graph, whose distance-2 graph is 2-walk-regular.

We remark that the halved graphs of a bipartite graph are degenerate cases of the distance-2 graph. We thus obtain the
following.

\begin{corollary}\label{cor: halvedgraph}
Let $t \geq 2$, and let $\Gamma$ be a $t$-walk-regular bipartite graph. Then the halved graphs of $\G$ are $\lfloor t/2
\rfloor$-walk-regular.
\end{corollary}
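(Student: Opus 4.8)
The plan is to specialize the argument of Proposition~\ref{prop: distance2graph}, using that a bipartite graph has infinite odd-girth (so the exponent $\min\{\lfloor s/2\rfloor,\lfloor t/2\rfloor\}$ becomes $\lfloor t/2\rfloor=:r$) and that $a_1=0$. Let $V=V^+\cup V^-$ be the two color classes of the connected bipartite graph $\G$ and order the vertices so that $A=\begin{bmatrix}O&N\\N^{\top}&O\end{bmatrix}$. The distance-$2$ graph $\G_2$ is then the disjoint union of the two halved graphs $\G^+$ and $\G^-$ (on $V^+$ and $V^-$), since two vertices of $\G$ are at even distance exactly when they lie in the same class; moreover, each halved graph is connected because $\G$ is (a path in $\G$ between two vertices of a class has even length and can be traversed two edges at a time). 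As $\G$ is $2$-walk-regular, $c_2$ is well-defined and positive and $B:=A_2(\G)=\frac1{c_2}(A^2-kI)=\begin{bmatrix}A^+&O\\O&A^-\end{bmatrix}$, where $A^+=\frac1{c_2}(NN^{\top}-kI)$ and $A^-=\frac1{c_2}(N^{\top}N-kI)$ are the adjacency matrices of $\G^+$ and $\G^-$.

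I would then record the metric fact that, for every $i$, two vertices of $V^+$ are at distance $i$ in $\G^+$ if and only if they are at distance $2i$ in $\G$ (and similarly for $V^-$), so that $A_i(\G^+)$ equals the $V^+\times V^+$ block of $A_{2i}(\G)$. One direction holds because a length-$\ell$ path in $\G^+$ gives a length-$2\ell$ walk in $\G$; the other because a shortest $x$--$y$ path in $\G$ between vertices of $V^+$ has even length, say $2j$, and meets $V^+$ exactly at its even positions, consecutive ones being at distance precisely $2$ in $\G$ (distance $0$ is impossible on a shortest path, distance $1$ impossible within a class), which yields a length-$j$ path in $\G^+$ and forces $j$ to be the $\G^+$-distance.

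Now for the idempotents. Since $B$ is block diagonal, every minimal idempotent $\widehat E$ of $B$ is block diagonal, $\widehat E=\begin{bmatrix}\widehat E^{++}&O\\O&\widehat E^{--}\end{bmatrix}$, and its nonzero blocks are precisely minimal idempotents of $A^+$ and $A^-$; conversely, since every eigenvalue of $A^+$ (and of $A^-$) is an eigenvalue of $B$, every minimal idempotent of $\G^+$ arises as such a block. On the other hand $B=\frac1{c_2}(A^2-kI)$ is a polynomial in $A$, so $\widehat E$ is a sum of minimal idempotents of $\G$; hence, for $2i\le t$, the $t$-walk-regularity relations $A_{2i}(\G)\circ E_\theta=\alpha_{2i}(\theta)A_{2i}(\G)$ sum to $A_{2i}(\G)\circ\widehat E=\gamma_iA_{2i}(\G)$ for a constant $\gamma_i$. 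Restricting this identity to the $V^+\times V^+$ block and invoking the metric fact gives $A_i(\G^+)\circ\widehat E^{++}=\gamma_iA_i(\G^+)$ for all $i\le r$ and all minimal idempotents $\widehat E^{++}$ of $\G^+$; since one also checks easily that $\G^+$ has diameter at least $r$ (using $k_{2i}>0$ for $2i\le t$ in a $t$-walk-regular graph), the idempotent characterization of $t$-walk-regularity shows $\G^+$, and likewise $\G^-$, is $r$-walk-regular.

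I expect the point needing most care to be the idempotent bookkeeping in the third paragraph — verifying that the nonzero blocks of the minimal idempotents of $B$ are exactly the minimal idempotents of the halved graphs (that none is missed and that the ranks match up), which rests on the block structure of $B$ together with the symmetry of the spectrum of a bipartite graph. The rest is a routine specialization of Proposition~\ref{prop: distance2graph}; the genuinely new ingredient is merely that a halved graph of a connected bipartite graph is connected, which here takes over the role played by the ``not complete multipartite'' hypothesis there.
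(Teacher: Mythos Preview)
Your proposal is correct and follows essentially the same approach as the paper. The paper does not give a separate proof of the corollary at all; it simply remarks that ``the halved graphs of a bipartite graph are degenerate cases of the distance-$2$ graph'' and invokes Proposition~\ref{prop: distance2graph}. Your write-up is exactly the explicit specialization of that proposition to the bipartite case, supplying the details (block structure of $A_2$, the metric identity $A_i(\G^+)=$ the $V^+\times V^+$ block of $A_{2i}(\G)$, connectedness of the halves, and the idempotent bookkeeping) that the paper leaves implicit.
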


Using that the minimal idempotents of the line graph of a regular graph are easily deduced from the minimal idempotents
of the graph, we obtain the following.

\begin{proposition}\label{prop: line graph t-w-r}
Let $t\geq 0$. Let $\Gamma$ be a $(t+1)$-walk-regular graph with valency $k$ and girth larger than
$2t+1$. Then the line graph of $\Gamma$ is $t$-walk-regular.
\end{proposition}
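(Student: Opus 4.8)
The plan is to realize the adjacency matrix of the line graph $L(\G)$ in terms of the incidence matrix $N$ of $\G$ and to transport the minimal idempotents of $\G$ accordingly. Recall the standard facts: if $N$ is the $|V| \times |E|$ vertex-edge incidence matrix of the $k$-regular graph $\G$, then $N N^{\top} = A + kI$ and $N^{\top} N = A(L(\G)) + 2I$. Consequently, if $E_i = U_i U_i^{\top}$ is a minimal idempotent of $\G$ for eigenvalue $\theta_i \ne -k$, then $\frac{1}{\theta_i + k} N^{\top} E_i N$ is an idempotent of $L(\G)$ (not necessarily minimal) for the eigenvalue $\theta_i + k - 2$; and when $-k$ is an eigenvalue of $\G$ the kernel of $N$ produces the eigenvalue $-2$ of $L(\G)$, whose idempotent is $I - \sum_{\theta_i \ne -k} \frac{1}{\theta_i+k} N^\top E_i N$. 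So every minimal idempotent of $L(\G)$ is a known polynomial-type expression in an idempotent of $\G$ conjugated by $N$.

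The first substantive step is therefore to control distances in $L(\G)$ in terms of distances in $\G$, up to radius $t$. Here the girth hypothesis enters: if $\G$ has girth greater than $2t+1$, then for two edges $e = \{x,y\}$ and $f = \{u,v\}$ of $\G$ one has $\dist_{L(\G)}(e,f) \le t$ if and only if, up to relabelling the endpoints, $\dist_{\G}(y,u) \le t-1$, and in that case $\dist_{L(\G)}(e,f) = \dist_{\G}(y,u) + 1$ (with $\dist_{L(\G)}(e,f) = 0$ when $e=f$). The girth bound guarantees that the shortest edge-path is essentially unique and that no ``shortcut'' through a short cycle can shorten it; one argues that a walk of length $\le t$ in $L(\G)$ corresponds to a walk of length $\le t+1$ in $\G$ between endpoints, and a short closed detour would force a cycle of length $\le 2t+1$. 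Having established this dictionary, one checks that for $\ell \le t$ the distance-$\ell$ matrix $A_\ell(L(\G))$, when its $(e,f)$-entry is expanded via $(N^\top M N)_{ef} = \sum$ over the endpoints, picks out exactly the $(\cdot,\cdot)$-entries of $N^\top A_{\ell-1}(\G) N$ (together with $N^\top N$ contributions for the small cases $\ell=0,1$), because the girth condition makes the relevant sums collapse to a single term.

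The final step is the bookkeeping: given the minimal idempotent $E$ of $L(\G)$, write it as above in terms of an idempotent $E'$ of $\G$ conjugated by $N$, and take the entrywise (Hadamard) product $A_\ell(L(\G)) \circ E$ for $\ell \le t$. Using the distance dictionary, this entrywise product only involves, for each pair of edges at distance $\ell$, the single $\G$-entry $E'_{y u}$ with $\dist_\G(y,u) = \ell - 1 \le t-1 \le (t+1)-1$; since $\G$ is $(t+1)$-walk-regular, $E'_{yu}$ depends only on $\dist_\G(y,u)$, hence $A_\ell(L(\G)) \circ E = \alpha_\ell A_\ell(L(\G))$ for a constant $\alpha_\ell$. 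One does this for every minimal idempotent of $L(\G)$, which by the structure above all arise from idempotents of $\G$, and concludes that $L(\G)$ is $t$-walk-regular. (Note $L(\G)$ is connected since $\G$ is connected and has at least one edge.)

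The main obstacle I expect is making the distance dictionary between $L(\G)$ and $\G$ fully rigorous in the borderline cases and verifying that the girth bound $2t+1$ is exactly what is needed: one must rule out that two edges sharing no vertex but both lying on a short cycle end up closer in $L(\G)$ than the endpoint-distance predicts, and one must handle carefully the edges $e,f$ sharing a vertex (distance $1$ in $L(\G)$) versus sharing both vertices (equal). Everything else — the incidence-matrix identities and the Hadamard-product bookkeeping — is routine linear algebra once that combinatorial lemma is in place.
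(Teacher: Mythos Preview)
Your overall architecture matches the paper's: transport idempotents of $\G$ to $L(\G)$ via the incidence matrix $N$, set up a distance dictionary under the girth hypothesis, and verify that each idempotent of $L(\G)$ has constant entries at each distance $\le t$. The gap is in the last step. For edges $e=\{x,y\}$ and $f=\{u,v\}$ one has
\[
(N^\top E' N)_{ef}=E'_{xu}+E'_{xv}+E'_{yu}+E'_{yv},
\]
a sum of four terms, not one; the girth condition does not make three of them vanish. What the girth hypothesis does guarantee (and what the paper actually proves) is that when $\dist_{L(\G)}(e,f)=\ell\le t$, the four $\G$-distances among the endpoints are exactly $\ell+1,\ell,\ell,\ell-1$ (with the adjustment that for $\ell=0$ they are $1,0,0,1$). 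Hence to conclude that the above sum depends only on $\ell$ one needs $E'$ to have constant entries at all $\G$-distances up to $\ell+1\le t+1$, and this is precisely why $(t+1)$-walk-regularity of $\G$ is assumed. Your version, invoking only the entry at distance $\ell-1\le t-1$, would need nothing more than $t$-walk-regularity of $\G$; the stronger hypothesis would be unexplained, and the argument as written does not actually establish constancy of the idempotent entries.
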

\begin{proof}
Let $k=\theta_0>\cdots >\theta_d$ be the distinct eigenvalues of $\G$. Consider an eigenvalue $\theta_i\neq -k$ and let
$E_i$ be the minimal idempotent associated to $\theta_i$. Let $N$ be the vertex-edge incidence matrix of $\Gamma$, then
$A=NN^{\top}-kI$ is the adjacency matrix of $\G$, and $B=N^{\top}N-2I$ is the adjacency matrix of the line graph
$L(\G)$ of $\G$. It is well-known and easy to check that if $u$ is an eigenvector of $A$ with eigenvalue $\theta_i\neq
-k$, then $N^{\top}u$ is an eigenvector of $B$ with eigenvalue $\sigma_i=\theta_i+k-2$. It follows that
$F_{\sigma_i}=\frac1{k+\theta_i}N^{\top} E_i N$ is a minimal idempotent for $L(\G)$ with
corresponding eigenvalue $\sigma_i$. Moreover, if $-2$ is an eigenvalue of $L(\G)$, then
\begin{equation}
\label{F-2}
F_{-2}=I-\sum_{\sigma_i\neq -2} F_{\sigma_i}
\end{equation}
completes the set of all minimal idempotents of $L(\G)$.

Let $e_1=uv$ and $e_2=xy$ be two edges in $\Gamma$, that is, two vertices in $L(\G)$. Assume that
$\dist_{L(\G)}(e_1,e_2)=s\leq t$, then, as the girth is larger than $2s+1$, we can assume without loss of generality
that $\dist_{\G}(u,y)=s+1$, $\dist_{\G}(u,x)=\dist_{\G}(v,y)=s$, and $\dist_{\G}(v,x)=s-1$ (except for the case $s=0$;
then $\dist_{\G}(v,x)=1$). Let $\sigma_i\neq -2$, $F=F_{\sigma_i}$, and $E=E_i$, then the $(e_1,e_2)$-entry of $F$ will
be $F_{e_1e_2}=\frac1{k+\theta_i}(E_{ux}+E_{uy}+E_{vx}+E_{vy}),$ which does not depend on the
chosen $e_1$ and $e_2$, but only on the distance $s$ between them in $L(\G)$. As this holds for every $\sigma_i \neq
-2$, by \eqref{F-2} also the entries of $F_{-2}$ depend only on distance. So we conclude that $L(G)$ is
$t$-walk-regular.
\end{proof}

An example is the already mentioned graph on the flags of the $11$-point biplane whose distance distribution diagram is
in Figure~\ref{fig: F11 flag graph}. Since this graph has girth $5$ and it is $3$-walk-regular (and therefore
$2$-walk-regular), its line graph is $1$-walk-regular (and not $2$-walk-regular). This shows that the condition on the
girth is necessary. Also the line graphs of $s$-arc-transitive graphs (with large girth) provide new examples of
$t$-walk-regular graphs. Note by the way that the line graph of a $(t+1)$-arc-transitive graph with valency at least 3
is not $t$-arc-transitive (for $t \geq 2$), since it has triangles.

We will proceed with a straightforward construction method for $1$-walk-regular graphs. Let us first recall the
coclique extension of a graph $\G$, that is, the graph with adjacency matrix $A \otimes J$, where $A$ is the adjacency
matrix of $\G$, $J$ is a square all-ones matrix and $\otimes$ stands for the Kronecker product. It is fairly easy to
see (combinatorially) that if $\G$ is a $1$-walk-regular graph, then also every coclique extension of $\G$ is
$1$-walk-regular. A variation on the coclique extension is the Kronecker product $\G \otimes \G'$ of two graphs $\G$
and $\G'$, that is, the graph with adjacency matrix $A \otimes B$, where $A$ and $B$ are the adjacency matrices of $\G$
and $\G'$.

\begin{proposition}\label{prop: kron}
Let $\G$ and $\G'$ be two $1$-walk-regular graphs. Then the Kronecker product $\G \otimes \G'$ is $1$-walk-regular.
\end{proposition}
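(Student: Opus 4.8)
The plan is to mimic the structure of the proof of Proposition~\ref{prop: line graph t-w-r}: first describe all minimal idempotents of the product graph in terms of those of the factors, and then check that the entrywise product of each such idempotent with $I$, $A\otimes B$, and the distance-$0$/distance-$1$ matrices behaves as required. Write $k=\theta_0>\cdots>\theta_d$ for the distinct eigenvalues of $\G$ with minimal idempotents $E_i$, and $k'=\eta_0>\cdots>\eta_{d'}$ for those of $\G'$ with minimal idempotents $F_j$. Since $\sum_i E_i=I$ and $\sum_j F_j=I$, we get $\sum_{i,j}E_i\otimes F_j=I\otimes I=I$, and each $E_i\otimes F_j$ is symmetric idempotent with $(A\otimes B)(E_i\otimes F_j)=\theta_i\eta_j\,(E_i\otimes F_j)$; so the eigenspaces of $A\otimes B$ are spanned by sums of those $E_i\otimes F_j$ whose eigenvalue product $\theta_i\eta_j$ takes a common value. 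Thus the minimal idempotent of $\G\otimes\G'$ for an eigenvalue $\mu$ is $G_\mu=\sum_{\theta_i\eta_j=\mu}E_i\otimes F_j$.

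Next I would record that $\G\otimes\G'$ is always regular (valency $kk'$), and that for a pair of vertices $(x,x')$, $(y,y')$ one has $\dist_{\G\otimes\G'}\big((x,x'),(y,y')\big)=1$ exactly when $x\sim y$ in $\G$ and $x'\sim y'$ in $\G'$. Equivalently, $A_1(\G\otimes\G')=A(\G)\otimes A(\G')$ is precisely the adjacency matrix, and $A_0(\G\otimes\G')=I\otimes I$. So to prove $1$-walk-regularity it suffices, by the characterization recalled in the Preliminaries (a graph is $t$-walk-regular iff $A_\ell\circ E=\alpha_\ell A_\ell$ for $\ell\le t$ and every minimal idempotent $E$), to show that the diagonal entries of $E_i\otimes F_j$ are constant and that the entries of $E_i\otimes F_j$ on the support of $A\otimes B$ are constant; these two properties then pass to each $G_\mu$ by summation, and constancy on $A_0$ and on $A_1$ is exactly $1$-walk-regularity.

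For the diagonal: the $\big((x,x'),(x,x')\big)$-entry of $E_i\otimes F_j$ equals $(E_i)_{xx}(F_j)_{x'x'}$, which is constant in $x$ (resp.\ $x'$) because $\G$ (resp.\ $\G'$) is $1$-walk-regular, hence walk-regular in particular on the diagonal. For an edge of $\G\otimes\G'$, i.e.\ $x\sim y$ and $x'\sim y'$: the $\big((x,x'),(y,y')\big)$-entry of $E_i\otimes F_j$ equals $(E_i)_{xy}(F_j)_{x'y'}$, and $1$-walk-regularity of $\G$ gives $(E_i)_{xy}=\alpha_1(\theta_i)$ depending only on the distance $1$, while $1$-walk-regularity of $\G'$ gives $(F_j)_{x'y'}=\alpha_1(\eta_j)$; so the product is independent of the chosen edge. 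Summing over the pairs $(i,j)$ with $\theta_i\eta_j=\mu$ shows $G_\mu\circ I$ and $G_\mu\circ(A\otimes B)$ are scalar multiples of $I$ and $A\otimes B$ respectively, and this is all $1$-walk-regularity requires.

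The main obstacle is not the computation but two bookkeeping points: first, $\G\otimes\G'$ need not be connected (it is disconnected precisely when both $\G$ and $\G'$ are bipartite), so I either add the hypothesis that the product is connected, restrict to the case where not both factors are bipartite, or note that $t$-walk-regularity is naturally interpreted componentwise and the components of $\G\otimes\G'$ are isomorphic; second, one must be slightly careful that the $G_\mu$ really are \emph{all} the minimal idempotents (they are, since they are mutually orthogonal, sum to $I$, and each lies in a single eigenspace of $A\otimes B$ — any eigenvector of $A\otimes B$ is a linear combination of the $E_i\otimes F_j$), so that checking the condition on this family suffices. Once these points are settled, the entrywise-product argument above finishes the proof.
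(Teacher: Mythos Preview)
Your argument is correct and follows essentially the same route as the paper: build the idempotents of $\G\otimes\G'$ as (sums of) $E_i\otimes F_j$, then use that $(A\otimes B)\circ(E_i\otimes F_j)=(A\circ E_i)\otimes(B\circ F_j)$ is a scalar multiple of $A\otimes B$ and that the diagonal is constant by walk-regularity of the factors. Your explicit attention to connectedness (the product is disconnected exactly when both factors are bipartite) is a legitimate caveat that the paper's proof does not address.
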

\begin{proof}
Let $A$ and $B$ be the adjacency matrices of $\G$ and $\G'$, and let $E_i$  and $F_j$ be the respective minimal
idempotents, say with $AE_i=\theta_iE_i$, $A \circ E_i = \gamma_i A$ ($i=0,\dots,d$), $BF_j=\theta_j'F_j$, and $B
\circ F_j = \beta_j B$ ($j=0,\dots,d'$). It follows from standard multiplication rules of the Kronecker product that
$E_i \otimes F_j$ ($i=0,\dots,d; j=0,\dots,d'$) are idempotents of $\G \otimes \G'$, with eigenvalues $\theta_i
\theta_j'$ (see \cite{Grah:81}). We remark that these are not necessarily the minimal idempotents, because some
eigenvalues $\theta_i \theta_j'$ may coincide. If the latter is the case, then the corresponding minimal idempotent is
a sum of idempotents of the form $E_i \otimes F_j$.

First of all, $E_i \otimes F_j$ has constant diagonal because both $\G$ and $\G'$ are walk-regular, and so every
minimal idempotent has constant diagonal. So $\G \otimes \G'$ is also walk-regular.

Secondly, it follows that $(A \otimes B) \circ (E_i \otimes F_j) = (A \circ E_i) \otimes (B \circ F_j) =
\gamma_i\beta_j A \otimes B$, which shows that $\G \otimes \G'$ is 1-walk-regular.
\end{proof}

Note that it was already observed by Godsil and McKay \cite[Thm.~4.5]{Godsil198051} that several kinds of products,
such as the Kronecker product and the Cartesian product of walk-regular graphs are again walk-regular. Still, the
Cartesian product (or sum \cite{cds82}) $\G \oplus\G'$ of two $1$-walk-regular graphs $\G$ and $\G'$, that is, the
graph with adjacency matrix $A \otimes I +I \otimes B$, is in general not $1$-walk-regular. However, the particular
case $\G \oplus \G$ is again $1$-walk-regular, as one can easily show (the idempotents are $E_i \otimes E_j + E_j
\otimes E_i$ ($i \neq j$) and $E_i\otimes E_i$).

As announced in the Introduction of this paper, we will observe a structural gap between $1$-  and $2$-walk-regular
graphs. Clearly, we cannot speak of such a gap for graphs with valency $2$, because all such (connected) graphs are
distance-regular. The final result of this section is that also valency $3$ is special, in the sense that cubic
$1$-walk-regular graphs are also $2$-walk-regular. In particular, every cubic $1$-arc-transitive graph
is $2$-walk-regular.

\begin{proposition}\label{proporg: cubic 1-walk} Let $\Gamma$ be a cubic $1$-walk regular graph with diameter at least two. Then
$\Gamma$ is $2$-walk regular.
\end{proposition}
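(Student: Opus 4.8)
The plan is to work directly with the representation machinery developed in the Preliminaries. Fix a minimal idempotent $E$ of $\G$ for some eigenvalue $\theta$ with cosine sequence starting $u_0=1,u_1$. Since $\G$ is $1$-walk-regular with valency $3$, all the local parameters $a_1$, $b_1=3-1-a_1$, $c_1=1$ are well-defined, and I must show that for a vertex $x$ and any $y\in\G_2(x)$ the quantity $E_{xy}$ depends only on $\theta$ (equivalently $u_2:=E_{xy}/\alpha_0$ is constant), and likewise that $a_2(x,y),c_2(x,y)$ are constant; that is exactly $2$-walk-regularity. The key point is the very limited geometry available at valency $3$: the local graph $\Delta(x)$ has only $3$ vertices, so $a_1\in\{0,1,2\}$ (the case $a_1=3$ gives $K_4$, which has diameter $1$). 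Since $\G$ is $1$-walk-regular, $a_1$ is the same at every vertex, so one can split into these three cases. When $a_1=2$ every edge lies in a triangle and in fact (valency $3$) $\G=K_4$ or $\G$ consists of a $4$-clique structure — more carefully, $a_1=2$ forces $\G$ to be $K_4$ or $K_{3,3}$-free in a way that collapses the diameter, so the only surviving cases are $a_1\in\{0,1\}$, and $a_1=1$ means every edge is in exactly one triangle.

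The main tool is to use equation~\eqref{eq: relations alpha's}, which for $2$-walk-regularity I want to establish at $j=1$: I need $c_1\alpha_0+a_1\alpha_1+b_1\alpha_2=\theta\alpha_1$, i.e. $\alpha_2=(\theta\alpha_1-\alpha_0-a_1\alpha_1)/b_1$ provided $b_1\neq 0$. The subtlety is that this relation is a \emph{consequence} of $2$-walk-regularity, not an input, so instead I would argue as follows. Take $x$ and $y\in\G_2(x)$; using \eqref{eq: representation lambda} at $y$, $\theta\hat y=\sum_{z\sim y}\hat z$, and taking the inner product with $\hat x$ gives $\theta E_{xy}=\sum_{z\sim y}E_{xz}$. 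The neighbours $z$ of $y$ lie in $\G_1(x)$ or $\G_2(x)$ (none in $\G_0(x)=\{x\}$ since $\dist(x,y)=2$, and none in $\G_3(x)$ would be relevant only if they contributed unknown cosines). For $z\in\G_1(x)$ the value $E_{xz}=\alpha_1$ is known; writing $c_2(x,y)$ for the number of such $z$, and $a_2(x,y)$ for those at distance $2$ (whose cosine is the unknown $E_{xy}$-type quantity), and noting neighbours of $y$ at distance $3$ from $x$ contribute further unknowns, I get a linear system. At valency $3$ this system is small enough to control: $c_2(x,y)+a_2(x,y)+b_2(x,y)=3$, and I would show using the triangle structure (governed by the single parameter $a_1$) that $c_2(x,y)$ is forced — e.g. when $a_1=0$ the graph is triangle-free so a common neighbour structure of $x$ and $y$ is rigid, and when $a_1=1$ one traces the unique triangle through each edge.

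Concretely, I would first handle the triangle-free case $a_1=0$: then $b_1=2$, and for $y\in\G_2(x)$ every common neighbour $w$ of $x,y$ has its third neighbour outside $\{x,y\}$; counting edges between $\G_1(x)$ and $\G_2(x)$ via $1$-walk-regularity pins down $\sum_{y\in\G_2(x)}c_2(x,y)$, and a parity/rigidity argument at valency $3$ upgrades this to $c_2(x,y)$ being constant, after which $a_2=3-c_2-b_2$ and the cosine relation close up. For $a_1=1$ I would similarly use that the unique triangle on each edge pre-determines which neighbours of $y$ fall back toward $x$. The step I expect to be the genuine obstacle is ruling out that $c_2(x,y)$ (equivalently $a_2(x,y)$) genuinely varies with the choice of $y\in\G_2(x)$ while the global edge-counts between $\G_1(x)$ and $\G_2(x)$ stay balanced — one has to exploit the smallness of valency $3$ hard here, probably by a short case analysis on the multiset of values $\{c_2(x,y):y\in\G_2(x)\}$ together with the fact that $E\circ A_2$ must be consistent with $\theta E_{xy}=\sum_{z\sim y}E_{xz}$ simultaneously at \emph{every} eigenvalue, which over-determines the unknowns unless the local picture is homogeneous. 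Once $c_2$, $a_2$, $b_2$ are shown constant and the second cosine $u_2$ is shown to depend only on $\theta$, $2$-walk-regularity follows by definition.
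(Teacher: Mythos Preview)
Your approach has a genuine gap, and it is also working harder than necessary. You correctly set up the eigenvalue relation $\theta\hat y=\sum_{z\sim y}\hat z$, but you apply it at the vertex $y\in\Gamma_2(x)$, whose neighbours may lie in $\Gamma_3(x)$ and therefore contribute unknown cosines. This forces you into trying to establish combinatorially that $c_2(x,y)$ is constant --- a step you yourself flag as the ``genuine obstacle'' and never actually carry out. The parity/rigidity sketch does not close this: knowing $\sum_{y\in\Gamma_2(x)}c_2(x,y)$ does not pin down the individual values, and the ``over-determination at every eigenvalue'' idea is too vague to constitute an argument.

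The paper's proof sidesteps all of this by applying the eigenvalue relation at a \emph{common neighbour} $z$ of $x$ and $y$ rather than at $y$. Since $k=3$, the neighbours of $z$ are exactly $x$, $y$, and one further vertex $w$. Reading off the $(z,x)$-, $(z,y)$-, and $(z,w)$-entries of $AE=\theta E$ gives the three symmetric equations
\[
1+u_{yx}+u_{wx}=\theta u_1,\qquad 1+u_{xy}+u_{wy}=\theta u_1,\qquad 1+u_{xw}+u_{yw}=\theta u_1,
\]
and subtracting pairwise forces $u_{xy}=u_{wx}=u_{wy}=(\theta u_1-1)/2$. Thus $u_{xy}$ depends only on $\theta$, which is already $2$-walk-regularity; the constancy of $c_2$ and $a_2$ is then a \emph{consequence}, not an input. (Your case split is also largely moot: $a_1=1$ cannot occur in a cubic edge-regular graph, since the number of triangles through a vertex would be $3a_1/2$, and $a_1=2$ forces $K_4$; so only $a_1=0$ is actually live.)
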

\begin{proof}
Let $\theta$ be an eigenvalue of $\G$ and consider the cosines with respect to this eigenvalue. Let $u_1$ be the cosine
for two vertices at distance $1$. Let $x$ and $y$ be two vertices at distance $2$. Consider a common neighbor $z$ of
$x$ and $y$, and let $w$ be the third neighbor of $z$.  By considering the $(z,x)$-entry in the equation
$AE_{\theta}=\theta E_\theta$, we obtain that $1 + u_{yx} + u_{wx}=\theta u_1$. Similarly, we find that $1 + u_{xy} +
u_{wy}=\theta u_1$ and $1 + u_{xw} + u_{yw}=\theta u_1$. These three equations imply that $u_{xy} = (\theta u_1 -1)/2$.
So the cosine for two vertices at distance $2$ is constant (for every eigenvalue), hence $\G$ is $2$-walk-regular.
\end{proof}

\section{Godsil's multiplicity bound}\label{sec:godsil}
Let $m \geq 2$ and let $\Gamma$ be a connected regular graph with an eigenvalue $\theta\neq \pm k$ with multiplicity
$m$. Godsil \cite{g88} proved that if such a graph is distance-regular and not complete multipartite, then both its
diameter and its valency are bounded by a function of $m$. In particular, this assures that there are finitely many
such distance-regular graphs. In this section we extend some of Godsil's results and reasonings to the class of
$2$-walk-regular graphs. The main difference with distance-regular graphs is that we are not able to bound the
diameter.

We start by pointing out that, as it happens with distance-regular graphs, the images of two vertices at distance at
most $2$ under a representation associated to $\theta\neq\pm k$ cannot be collinear. The following lemma can indeed be
read between the lines in a proof by Godsil \cite{g88}.

\begin{lemma}\label{lem: 2-walk x=y}
Let $\Gamma$ be a $2$-walk-regular graph different from a complete multipartite graph, with valency $k\geq 3$ and
eigenvalue $\theta\neq \pm k$. Let $x$ and $y$ be vertices of $\Gamma$ and consider a representation associated to a
$2$-walk-regular idempotent for $\theta$. If $\hat{x}=\pm\hat{y}$, then $\dist(x,y)>2$.
\end{lemma}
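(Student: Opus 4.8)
The plan is to derive a contradiction from the assumption that $\hat x = \pm\hat y$ while $\dist(x,y) = j$ for some $j \in \{0,1,2\}$. The key observation is that for a $2$-walk-regular idempotent the cosine $u_s = \alpha_s/\alpha_0$ is well-defined for $s \le 2$, and the relations \eqref{eq: relation alpha0 alpha1} and \eqref{eq: relations alpha's} (applied with $t \ge 2$, so that \eqref{eq: relations alpha's} holds for $j=1$) force $\theta$ to be determined by $u_1$; indeed $k u_1 = \theta u_0 = \theta$ and $c_1 u_0 + a_1 u_1 + b_1 u_2 = \theta u_1$, i.e. $1 + a_1 u_1 + b_1 u_2 = \theta u_1$. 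If $\hat x = \hat y$ then $u_j = u_{xy} = 1$, and if $\hat x = -\hat y$ then $u_j = -1$. So I would examine each of the three cases $j = 0, 1, 2$ and each sign, and in every case show that the resulting constraints are incompatible with $\theta \ne \pm k$ (or with $\Gamma$ not being complete multipartite).

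First I would dispose of $j = 0$: this is $x = y$, and then $\hat x = -\hat x$ forces $\hat x = 0$, contradicting that the representation is spherical with $\|\hat x\|^2 = \alpha_0 > 0$. Next, the case $j = 1$: here $u_1 = \pm 1$. From $k u_1 = \theta$ we would get $\theta = \pm k$, which is excluded by hypothesis. (One should note $k \ge 3 > 0$, so $u_1 = \pm 1$ genuinely forces $\theta = \pm k$ and is not vacuous.) The substantive case is $j = 2$, where $u_2 = \pm 1$ while, crucially, $|u_1| < 1$ — this last point needs the case $j=1$ already handled, since if $|u_1| = 1$ we would again be in the excluded situation $\theta = \pm k$. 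Geometrically, $\hat x = \pm\hat y$ with $\|\hat x\| = \|\hat y\|$ means $\hat x$ and $\hat y$ are antipodal or equal points on a sphere; I would use \eqref{eq: representation lambda}, $\theta \hat z = \sum_{w \sim z}\hat w$, applied to a common neighbour $z$ of $x$ and $y$ (which exists since $\dist(x,y)=2$ implies $c_2 = p^2_{11} > 0$), exactly as in the proof of Proposition \ref{proporg: cubic 1-walk}: taking the inner product of that identity with $\hat x$ yields $\theta u_{zx} = \sum_{w \sim z} u_{wx}$, and separating the neighbours of $z$ by their distance to $x$ gives $\theta u_1 = c_2(x,z)\,u_0 + a_2\text{-type terms} + \dots$; the point is that $u_2 = \pm 1 = \pm u_0$ means every neighbour $w$ of $z$ with $\dist(x,w)=2$ contributes $\pm\|\hat x\|^2$, i.e. $\hat w = \pm\hat x$ as well, so the "$\hat x = \pm\hat y$" phenomenon propagates.

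The main obstacle — and the heart of the argument — is turning this propagation into a global contradiction: I expect to show that $\hat x = \pm\hat y$ for $\dist(x,y) = 2$ forces the equivalence classes of the relation "$\hat u = \pm\hat v$" to form a system of imprimitivity on which $\Gamma$ collapses, and that the only way this is consistent with $\theta \ne k$ is if $\Gamma$ is complete multipartite (when $\theta = -k$ the graph is bipartite, and one tracks signs). Concretely, in the $+$ case one argues that $\{u : \hat u = \hat x\}$ is a coclique that behaves like a single vertex, so after identification $\Gamma$ becomes a graph in which any two nonadjacent vertices are "the same," i.e. $\Gamma$ is complete multipartite; in the $-$ case one gets a bipartition forcing $\theta = -k$. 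I would also use the $2$-walk-regular parameters $p^2_{ij}$ and relations like $k_2 p^2_{11} = k\, p^1_{12}$ to control how many such neighbours appear and to rule out the remaining degenerate configurations, but I would keep the case analysis short by leaning on Proposition \ref{proporg: cubic 1-walk}'s computation as the template for extracting $u_2$ from $u_1$ and $\theta$ whenever a vertex of degree three is available, and on a counting/eigenvalue-interlacing argument otherwise.
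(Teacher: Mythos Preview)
Your setup and the cases $j\in\{0,1\}$ are fine and match the paper. The substantive content is $j=2$, and there your plan diverges from the paper's proof: you split according to the sign in $u_2=\pm1$, whereas the paper splits according to whether $a_1=0$ or $a_1>0$.

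In the triangle-free case ($a_1=0$) your sign-split can be made to work and is essentially equivalent to the paper's argument: from \eqref{eq: relations alpha's} one gets $1+b_1u_2=\theta u_1=\theta^2/k$, so $u_2=-1$ yields $\theta^2=k(2-k)<0$ for $k\ge3$, while $u_2=+1$ gives $\theta\hat z=\sum_{w\sim z}\hat w=k\hat x$ for any common neighbour $z$ of $x$ and $y$, since every $w\sim z$ lies at distance $0$ or $2$ from $x$. This is exactly what the paper does (it first rules out $u_2=-1$ via a parity trick using a third neighbour of $z$, which amounts to the same thing).

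Where your proposal has a genuine gap is the case $a_1>0$. Your claim that in the ``$+$'' case the fibres $\{u:\hat u=\hat x\}$ collapse $\Gamma$ to a graph in which any two nonadjacent vertices are identified --- hence complete multipartite --- is not justified: $u_2=1$ only controls pairs at distance exactly~$2$, and you have not argued why the diameter must then be~$2$ (nor is this forced by the recurrence, which only gives $\theta=a_1-k$). Likewise, in the ``$-$'' case your assertion that one obtains a bipartition forcing $\theta=-k$ fails when $a_1>0$: the relation $1+a_1u_1-b_1=\theta u_1$ does not give $\theta=-k$, and there is no bipartition to speak of since triangles are present. The hand-wave toward ``eigenvalue interlacing'' does not substitute for an argument here.

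The paper's missing idea for $a_1>0$ is short and combinatorial: since every edge lies in a triangle, take $v,w$ at distance~$2$, a common neighbour $z_1$, and a common neighbour $z_2$ of $v$ and $z_1$. If $w\not\sim z_2$ then $\operatorname{dist}(w,z_2)=2$, so $\hat w=\pm\hat z_2$; combined with $\hat v=\pm\hat w$ and $v\sim z_2$ this forces $u_1=\pm1$, a contradiction. Hence $w\sim z_2$, and iterating over all neighbours of $v$ shows that $v$ and $w$ have $k$ common neighbours, i.e.\ $\Gamma$ is complete multipartite. You should replace the vague quotient/interlacing plan with this triangle argument.
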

\begin{proof}
Assume that $x,y\in V$ are such that $\hat{x}=\pm \hat{y}$. Then $u_{xy}=\langle \hat{x},\hat{y}\rangle /\alpha_0=\pm 1$.
If $x$ and $y$ are adjacent, then this implies that $u_1=u_{xy}=\pm 1$. From \eqref{eq: relation alpha0 alpha1} we find
that $u_1=\theta/k$, so $\theta=\pm k$, a contradiction. Suppose now that $\dist(x,y)=2$. Then $u_2=\pm 1$, so for
every pair of vertices $v$ and $w$ at distance $2$, we have that $\hat{v}=\pm \hat{w}$.

Assume first that $\Gamma$ is triangle free.  Let $z$ be a common neighbor of $x$ and $y$. Then, as $k\geq 3$, we must
have $\hat{x}=\hat{y}$ (otherwise $u_2=-1$, so if $w$ is another neighbor of $z$, then
$\hat{x}=-\hat{y}=\hat{w}=-\hat{x}$, a contradiction). Then \eqref{eq: representation lambda} gives $\theta
\hat{z}=\sum_{w\sim z}\hat{w}=k\hat{x}$. Since $\|\hat{x}\|=\|\hat{z}\|$, this implies that $\theta=\pm k$, again a
contradiction.

Suppose now that $\Gamma$ is not triangle free. As $\G$ is $2$-walk-regular, every edge is in a triangle. Let $v$ and
$w$ be two vertices at distance $2$. Let $z_1$ be a common neighbor of $v$ and $w$ and let $z_2$ be a common neighbor
of $v$ and $z_1$. Then $\hat{v}=\pm\hat{w}$. If $w$ is not adjacent to $z_2$, then $\hat{v}=\pm\hat{w}=\pm\hat{z_2}$
and $u_1=\pm 1$, a contradiction. So $w$ is adjacent to $z_2$. By the same argument, every other neighbor $z_3$ of $v$
is adjacent to $z_1$ or $z_2$, and hence to $w$. So every two vertices at distance $2$ have $k$ common neighbors, so
$\G$ is complete multipartite, a final contradiction.
\end{proof}

An immediate corollary is the following.

\begin{corollary}
Let $\Gamma$ be a $2$-walk-regular graph different from a complete multipartite graph, with valency $k\geq 3$ and
eigenvalue $\theta\neq k$, and consider the representation associated to a $2$-walk-regular idempotent
for $\theta$. If $u_2=\pm 1$, then $\theta=-k$ and $\Gamma$ is bipartite.
\end{corollary}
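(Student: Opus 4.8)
The corollary follows by combining Lemma \ref{lem: 2-walk x=y} with the basic relations \eqref{eq: relation alpha0 alpha1}--\eqref{eq: relations alpha's}. The plan is as follows. Assume $u_2 = \pm 1$ and aim first to rule out $u_2 = +1$, then to pin down $\theta = -k$ and bipartiteness from $u_2 = -1$.

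\emph{Step 1: $u_2 \neq 1$.} Suppose $u_2 = 1$. Pick vertices $x, y$ with $\dist(x,y) = 2$ and a common neighbor $z$. Then $u_{xz} = u_{yz} = u_1$ and $u_{xy} = u_2 = 1$, so $\hat{x}$ and $\hat{y}$ are unit-length (after scaling by $\alpha_0$) vectors with $\langle \hat{x}, \hat{y}\rangle/\alpha_0 = 1$, forcing $\hat{x} = \hat{y}$. But $\dist(x,y) = 2$, contradicting Lemma \ref{lem: 2-walk x=y}. Hence $u_2 = -1$.

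\emph{Step 2: $u_2 = -1$ forces $\theta = -k$.} With $u_2 = -1$, take $x, y$ at distance $2$; then $\hat{x} = -\hat{y}$, so by Lemma \ref{lem: 2-walk x=y} we cannot have two vertices at distance $2$ in this configuration unless... more precisely, $\hat{x} = -\hat{y}$ with $\dist(x,y)=2$ directly contradicts Lemma \ref{lem: 2-walk x=y} \emph{unless} there are no two vertices at distance exactly $2$ — i.e.\ $\G$ has diameter $1$, which is excluded since $D \geq 2$ is implicit in $2$-walk-regularity (and a complete graph is complete multipartite anyway). So I need to reexamine: the cleanest route is that $u_2 = -1$ together with the lemma is already contradictory, so this corollary should instead read off the \emph{sign pattern} at distance $2$ before the lemma is invoked. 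The honest argument: if $u_2 = -1$, consider the relation \eqref{eq: relations alpha's} with $j = 1$, namely $c_1 \alpha_0 + a_1 \alpha_1 + b_1 \alpha_2 = \theta \alpha_1$, i.e.\ $1 + a_1 u_1 + b_1 u_2 = \theta u_1$ (dividing by $\alpha_0$, using $c_1 = 1$, $b_1 = k - 1 - a_1$). Substituting $u_1 = \theta/k$ from \eqref{eq: relation alpha0 alpha1} and $u_2 = -1$ gives $1 + a_1 \theta/k - (k-1-a_1) = \theta^2/k$, which rearranges to $\theta^2 - a_1 \theta - k(a_1 - k + 2) = 0$, i.e.\ $(\theta + k)(\theta - (k - a_1 - 2)) \ldots$ — one checks the factorization $\theta^2 - a_1\theta + k a_1 - k^2 + 2k$; since $\theta = k$ is excluded and substituting $\theta = -k$ yields $k^2 + a_1 k + k a_1 - k^2 + 2k = 2k(a_1 + 1) $ which does not vanish in general, so this computational route needs the extra input from the lemma after all.

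\emph{Step 3 (the route I actually expect to work).} Invoke Lemma \ref{lem: 2-walk x=y} to conclude $u_2 \neq \pm 1$ whenever $\G$ is not complete multipartite with $k \geq 3$ and $\theta \neq \pm k$; hence for the corollary's hypothesis $u_2 = \pm 1$ to be consistent, the only remaining escape in the lemma's proof is $\theta = \pm k$, and since $\theta \neq k$ by assumption we get $\theta = -k$. Then $\theta = -k$ with $\G$ connected is equivalent to $\G$ being bipartite (a standard fact: the smallest eigenvalue equals $-k$ iff the graph is bipartite). So the corollary is essentially the contrapositive packaging of the lemma: the lemma shows $u_{xy} = \pm 1$ at distance $\leq 2$ forces $\theta = \pm k$, and here we additionally exclude $\theta = k$, so $\theta = -k$, which for a connected graph means bipartite. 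The main (and only real) obstacle is making sure the deduction ``$\theta = -k \iff$ bipartite'' is cited cleanly, and confirming that the $u_2 = +1$ case is indeed absorbed into the ``$\theta = \pm k$'' conclusion of the lemma rather than needing a separate argument — which it is, since $\hat{x} = \hat{y}$ at distance $2$ is exactly the situation the lemma rules out unless $\theta = \pm k$.
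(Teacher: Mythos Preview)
Your Step 3 is correct and is exactly the paper's argument: the corollary is the contrapositive of Lemma~\ref{lem: 2-walk x=y} (which, under the hypotheses $k\geq 3$, not complete multipartite, and $\theta\neq\pm k$, forbids $u_2=\pm1$), combined with the standard fact that a connected $k$-regular graph has $-k$ as an eigenvalue if and only if it is bipartite. Steps 1 and 2 are unnecessary detours---Step 1 is already subsumed by the lemma, and Step 2 (the attempted direct computation from \eqref{eq: relations alpha's}) cannot work on its own precisely because the lemma's hypothesis ``not complete multipartite'' has to enter somewhere, which it does not in a pure cosine calculation; you correctly recognize this and abandon it.
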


Let $\theta\neq \pm k$ be an eigenvalue of a 2-walk-regular graph $\G$ (with valency $k$) and consider the (spherical)
representation associated to a $2$-walk-regular idempotent with rank $m$ for $\theta$. Let $x$ be a
vertex of $\G$ and consider the set of vectors $\{\hat{y}\,|\, y\in \Gamma(x)\}$. These vectors lie in the hyperplane
of all vectors having inner product $\alpha_1$ with $\hat{x}$, so they lie in an $(m-1)$-dimensional sphere (in
$\R^m$). Lemma~\ref{lem: 2-walk x=y} ensures that the cardinality of the set is $k$. Also, the inner product between
two of its elements is either $\alpha_1$ or $\alpha_2$, so it is a (spherical) $2$-distance set. As pointed out by
Godsil \cite[Lemma 4.1]{g88}, Delsarte, Goethals, and Seidel \cite[Ex.~4.10]{DGS77} provide a bound for the size of
such a set, and we have the following (cf.~\cite[Thm.~1.1]{g88}):

\begin{theorem}\label{thm: bound k with m}
Let $\Gamma$ be a $2$-walk-regular graph, not complete multipartite, with valency $k\geq 3$. Assume that $\Gamma$ has
a $2$-walk-regular idempotent with rank $m$ for an eigenvalue $\theta\neq \pm k$. Then $k\leq
\frac{(m+2)(m-1)}{2}.$
\end{theorem}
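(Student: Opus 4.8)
The plan is to apply the Delsarte--Goethals--Seidel bound for spherical $2$-distance sets directly to the set $S_x = \{\hat{y} \mid y \in \Gamma(x)\}$, exactly as the discussion preceding the theorem sets up. First I would fix a vertex $x$ and a $2$-walk-regular idempotent of rank $m$ for $\theta \neq \pm k$, with the associated (spherical) representation $y \mapsto \hat{y}$ living in $\R^m$. By Lemma~\ref{lem: 2-walk x=y}, distinct neighbors of $x$ have distinct (in fact non-parallel) images, so $|S_x| = k$. All vectors $\hat{y}$ with $y \in \Gamma(x)$ have the same squared length $\alpha_0$ and the same inner product $\alpha_1$ with $\hat{x}$; hence they lie on the intersection of the sphere of radius $\sqrt{\alpha_0}$ with the affine hyperplane $\{v : \langle v, \hat{x}\rangle = \alpha_1\}$, which is an $(m-1)$-dimensional sphere. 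Moreover, for $y, y' \in \Gamma(x)$ distinct, $\dist(y,y') \in \{1,2\}$, so $\langle \hat{y}, \hat{y}'\rangle \in \{\alpha_1, \alpha_2\}$; thus $S_x$ is a spherical $2$-distance set in a sphere of dimension $m-1$.

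Next I would invoke the bound of Delsarte, Goethals, and Seidel \cite[Ex.~4.10]{DGS77}: a spherical $2$-distance set in $S^{n-1}$ has cardinality at most $\frac{n(n+3)}{2}$. Applying this with $n = m-1$ gives
\begin{equation*}
k = |S_x| \leq \frac{(m-1)(m-1+3)}{2} = \frac{(m-1)(m+2)}{2},
\end{equation*}
which is exactly the claimed bound. The only point needing a word of care is that the two inner products $\alpha_1$ and $\alpha_2$ are genuinely distinct (otherwise one might worry the set is a $1$-distance set, though the bound still holds, so this is not essential) and that $S_x$ really does span at most an $(m-1)$-sphere — but since $\hat{x}$ is orthogonal (after translation) to the hyperplane containing $S_x$ and $\hat{x} \notin \mathrm{span}(S_x)$ only if needed, the ambient dimension of the affine hull is at most $m-1$; in any case the DGS bound is monotone in the dimension, so using $m-1$ is safe.

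The main obstacle, such as it is, is entirely packaged into Lemma~\ref{lem: 2-walk x=y}: without the guarantee that the map $y \mapsto \hat{y}$ is injective on $\Gamma(x)$ (and that no two images are antipodal, which would also collapse the distance structure), one could not conclude $|S_x| = k$, and the counting argument would fail. Everything after that is a direct citation of the spherical $2$-distance set bound, so there is no real combinatorial work left once the local structure has been described as above. I would therefore present the proof as: recall the setup, cite Lemma~\ref{lem: 2-walk x=y} for injectivity, identify $S_x$ as a spherical $2$-distance set on an $(m-1)$-sphere, and apply \cite[Ex.~4.10]{DGS77}.
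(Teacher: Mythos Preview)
Your proposal is correct and follows essentially the same approach as the paper: the paper's proof is precisely the paragraph preceding the theorem, which sets up $S_x$ as a spherical $2$-distance set on a sphere in the hyperplane $\{v:\langle v,\hat{x}\rangle=\alpha_1\}$, invokes Lemma~\ref{lem: 2-walk x=y} for $|S_x|=k$, and then cites the Delsarte--Goethals--Seidel bound. You have simply made each of these steps explicit.
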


The assumption in this result is of course satisfied if $\G$ has an eigenvalue with multiplicity
$m>1$. The obtained bound will be key in Section~\ref{sec: geometric}, as well as for the study of $2$-walk-regular
graphs with an eigenvalue with multiplicity $3$ in Section~\ref{subsec: 2-walk-reg small mult}. In both cases we will
also use properties of the local graph of $2$-walk-regular graphs; we will study these in the next section. Note that
also some of the results in Terwilliger's `tree bound' paper \cite{T82} on $t$-arc-transitive graphs and in Hiraki and
Koolen's paper \cite{HirakiK} with improvements of Godsil's bound can be generalized to $t$-walk-regular graphs with
large enough girth.

\section{The local structure of 2-walk-regular graphs}\label{sec:terwilliger}
In \cite{T86} Terwilliger gave bounds for the eigenvalues of the local graphs of a distance-regular graph (see also 
\cite[Thm.~4.4.3]{bcn89} and \cite[Cor.~4.3, Ch. 13]{g93b}). We start
this section showing that these bounds also hold for 2-walk-regular graphs. We follow the proof as given by Godsil
\cite[Ch.~13]{g93b}.

\begin{proposition}\label{prop: local 1} Let $\Gamma$ be a
$2$-walk-regular graph with distinct eigenvalues $k=\theta_0>\cdots>\theta_d$. Let $x$ be a vertex of $\Gamma$
and let $\Delta$ be the subgraph of $\Gamma$ induced on the neighbors of $x$. Let
$a_1=\eta_0\geq\cdots\geq\eta_{k-1}$ be the eigenvalues of $\Delta$. Then
\begin{align*}
 \eta_{k-1} \geq & -1-\frac{b_1}{\theta_1+1},\\
 \eta_1 \leq & -1 -\frac{b_1}{\theta_d+1}.
\end{align*}
\end{proposition}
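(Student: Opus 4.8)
The plan is to follow Godsil's proof of Terwilliger's local eigenvalue bounds, adapted to the $2$-walk-regular setting by working with a $2$-walk-regular idempotent rather than a spectral idempotent of a distance-regular graph. Fix a vertex $x$ and an eigenvalue $\theta_i$ with $i \in \{1,d\}$, and let $E = E_{\theta_i}$ be a $2$-walk-regular idempotent for $\theta_i$ with associated representation $y \mapsto \hat y$; by \eqref{eq: representation lambda} we have $\theta_i \hat z = \sum_{w \sim z} \hat w$ for all $z$. The idea is to evaluate this identity at a neighbor $z$ of $x$ and split the sum over $\Gamma(z)$ according to distance from $x$: one gets $x$ itself, the $a_1$ neighbors of $z$ inside $\Delta = \Delta(x)$, and the $b_1$ neighbors of $z$ at distance $2$ from $x$. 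Writing $\hat z = \alpha_1 \hat x / \alpha_0 + \hat z^{\perp}$ for the decomposition orthogonal to $\hat x$ (here $\alpha_0 = \|\hat x\|^2$, and the coefficient is forced since $\langle \hat z, \hat x\rangle = \alpha_1$), and similarly projecting all vectors $\hat w$ for $w \in \Gamma(x)$ onto the hyperplane $\hat x^{\perp}$, the local structure appears: for $w \sim z$ with $w \in \Delta$ the projection is $\hat w^{\perp}$, for $w$ at distance $2$ one uses $\langle \hat w, \hat x \rangle = \alpha_2$.

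Next I would take inner products. Let $f$ be an eigenvector of the local graph $\Delta$ (with adjacency matrix $A_\Delta$) for eigenvalue $\eta$, extended to a vector indexed by $\Gamma(x)$. Consider the vector $\Phi := \sum_{z \in \Gamma(x)} f_z\, \hat z^{\perp} \in \hat x^{\perp}$ and compute $\|\Phi\|^2$ in two ways. On the one hand, $\|\Phi\|^2 = \sum_{z,z'} f_z f_{z'} \langle \hat z^{\perp}, \hat{z'}^{\perp}\rangle$; the inner product $\langle \hat z^{\perp}, \hat{z'}^{\perp}\rangle$ equals $\alpha_0 - \alpha_1^2/\alpha_0$ if $z = z'$, equals $\alpha_1 - \alpha_1^2/\alpha_0$ if $z \sim z'$ (so $\operatorname{dist}(z,z')=1$), and equals $\alpha_2 - \alpha_1^2/\alpha_0$ if $z \not\sim z'$, $z \neq z'$ (distance $2$ in $\Gamma$, using $2$-walk-regularity and Lemma~\ref{lem: 2-walk x=y} to know these are the only cases). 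Hence $\|\Phi\|^2$ is a linear combination of $\sum_z f_z^2 = 1$, $\sum_{z\sim z'} f_z f_{z'} = f^\top A_\Delta f = \eta$, and $(\sum_z f_z)^2$, with coefficients built from $\alpha_0, \alpha_1, \alpha_2$. On the other hand, using $\theta_i \hat z^{\perp} = \sum_{w \sim z} \hat w^{\perp}$ (the projection of the representation identity, with the $x$-term killed and the distance-$2$ terms contributing a multiple of... wait — one must be careful: the distance-$2$ neighbors $w$ project to $\hat w^{\perp}$ which is \emph{not} captured by $f$). The cleaner route, which is exactly Godsil's, is to apply $\theta_i \hat z^{\perp}$ only to reorganize things, or better: compute $\langle \Phi, \Phi\rangle$ and also $\sum_{z} f_z \langle \hat z^{\perp}, \Psi \rangle$ for an auxiliary $\Psi$; I would instead directly use the positive semidefinite Gram matrix argument below.

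The decisive step is a positivity argument. Form the vector $g := \sum_{z \in \Gamma(x)} f_z \hat z \in \R^m$ and also isolate its component along $\hat x$; since $\langle g, \hat x\rangle = \alpha_1 \sum_z f_z$, the squared norm of the part of $g$ orthogonal to $\hat x$ is $\|g\|^2 - \alpha_1^2 (\sum_z f_z)^2/\alpha_0 \ge 0$. Now feed in the relation coming from \eqref{eq: representation lambda}: apply $A$ to $g$, or equivalently use that $\sum_{z \in \Gamma(x)} (\sum_{w \sim z} \hat w) f_z = \theta_i g$ after regrouping by $w$; the term $\hat x$ appears with coefficient $a_1 \sum_z f_z$ (each $z$ contributes $\hat x$ once... actually each $z \in \Gamma(x)$ has $x$ as a neighbor, contributing $\hat x$, giving coefficient $\sum_z f_z$), the terms $\hat w$ with $w \in \Gamma(x)$ appear with coefficient $(A_\Delta f)_w = \eta f_w$, and the terms with $w$ at distance $2$ appear with some coefficient vector $h$. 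Projecting this onto $\hat x^\perp$ and pairing with $g^{\perp}$ (the projection of $g$) yields an expression of the form $\theta_i \|g^\perp\|^2 = (\text{const}) + \eta\, (\text{quadratic in } f) + \langle (\text{distance-2 part}), g^\perp\rangle$, and the distance-$2$ part, being a nonnegative combination of squared norms, can be dropped with the correct inequality sign. Substituting the Gram-matrix values for $\langle \hat z^\perp, \hat{z'}^\perp\rangle$, using \eqref{eq: relation alpha0 alpha1} ($k\alpha_1 = \theta_0 \alpha_0$) and \eqref{eq: relations alpha's} ($c_2 \alpha_1 + a_2\alpha_2 + b_2\alpha_3 = \theta\alpha_2$, etc.) to eliminate $\alpha_1/\alpha_0$ and $\alpha_2/\alpha_0$ in terms of $\theta_i, k, a_1, b_1$, and finally choosing $f$ to be an eigenvector for $\eta = \eta_{k-1}$ (smallest) resp.\ $\eta = \eta_1$ (second largest, orthogonal to the Perron vector of $\Delta$, which makes $\sum_z f_z = 0$ drop out cleanly) produces the claimed bounds $\eta_{k-1} \ge -1 - b_1/(\theta_1+1)$ and $\eta_1 \le -1 - b_1/(\theta_d+1)$.

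The main obstacle I anticipate is handling the distance-$2$ contribution correctly: one needs the projections $\hat w^{\perp}$ for $w \in \Gamma_2(x)$ to enter only through a manifestly nonnegative quantity so that discarding them gives an honest inequality (and the sign of the discarded term must match why $i=1$ gives a lower bound on $\eta_{k-1}$ while $i=d$ gives an upper bound on $\eta_1$ — this is governed by the sign of $\theta_i + 1$, which is positive for $\theta_1$ and we need it nonzero for $\theta_d$, noting $\theta_d \ge -k \ge -1$ is not automatic but $\theta_d \neq -1$ must be argued or the bound read projectively). A secondary bookkeeping obstacle is that we only have $2$-walk-regularity, so cosines $u_j$ for $j \ge 3$ are not available; the argument must be arranged so that only $\alpha_0, \alpha_1, \alpha_2$ (equivalently $u_1, u_2$) and the intersection numbers $a_1, b_1, c_2$ (well-defined by the cited results) ever appear — which is exactly why Godsil's proof, phrased locally, transfers without needing the full standard sequence. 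Once the PSD inequality is set up with these ingredients, the rest is the routine substitution using \eqref{eq: relation alpha0 alpha1}--\eqref{eq: relations alpha's}.
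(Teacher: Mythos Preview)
Your proposal contains the essential idea but buries it under an unnecessary detour. The paper's proof is direct: the principal submatrix $E_\Delta$ of $E = E_\theta$ indexed by $\Gamma(x)$ equals $\alpha_0 I + \alpha_1 A(\Delta) + \alpha_2(J - I - A(\Delta))$, since any two distinct non-adjacent vertices of $\Delta$ share the neighbor $x$ and hence lie at distance exactly $2$ in $\Gamma$. Because $E_\Delta$ is positive semidefinite, applying it to an eigenvector $w$ of $A(\Delta)$ for eigenvalue $\eta$ with $w \perp \mathbf{1}$ gives $\alpha_0 + \alpha_1\eta - \alpha_2(1+\eta) \geq 0$. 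Substituting $\alpha_1 = \alpha_0\theta/k$ and $\alpha_2 = \alpha_0(\theta^2 - a_1\theta - k)/(b_1 k)$ from \eqref{eq: relation alpha0 alpha1}--\eqref{eq: relations alpha's} and simplifying yields $(1+\eta)(\theta+1) \geq -b_1$; taking $\theta = \theta_1$ and $\theta = \theta_d$ gives the two bounds.

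Your ``on the one hand'' computation of $\|\Phi\|^2$ \emph{is} this argument and is already the entire proof once you take $f \perp \mathbf{1}$. The trouble comes from the ``on the other hand'' and ``decisive'' steps where you feed in $\theta_i\hat z = \sum_{w\sim z}\hat w$: this drags in vertices $w \in \Gamma_2(x)$ whose inner products with vertices of $\Gamma(x)$ can involve distances up to $3$, which $2$-walk-regularity does not control, and the expression $\langle H^\perp,g^\perp\rangle$ you would need to sign has no reason to be nonnegative. The ``distance-$2$ obstacle'' you flag is thus a phantom created by this detour, not a feature of the problem. You also do not need Lemma~\ref{lem: 2-walk x=y} to see that two non-adjacent $z,z' \in \Gamma(x)$ are at distance $2$ in $\Gamma$; they have the common neighbor $x$. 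Drop the eigenvalue-equation manoeuvre entirely, keep only the PSD inequality for $E_\Delta$, and you recover exactly the paper's proof.
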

\begin{proof}
Let $\theta\neq k$ be an eigenvalue of $\Gamma$ and let $E:=E_{\theta}$ be the minimal idempotent corresponding to
$\theta$. Since $\Gamma$ is $2$-walk-regular, the intersection numbers $a_j$, $b_j$ and $c_j$ $(j=0,1,2)$ are
well-defined and there exist constants $\alpha_i$ for $i\in\{0,1,2\}$ such that $E\circ A_i=\alpha_i A_i$, where
$\circ$ is the entrywise product. Also, \eqref{eq: relation alpha0 alpha1} and \eqref{eq: relations alpha's} lead to
 $\alpha_1=\alpha_0\frac{\theta}{k}$ and
$\alpha_2=\frac{1}{b_1}((\theta-a_1)\alpha_1-\alpha_0)=\alpha_0(\frac{\theta^2}{b_1k}-\frac{a_1\theta}{b_1k}-\frac{1}{b_1})$.

Let $E_{\Delta}$ be the principal submatrix of $E$ on the vertices of $\Delta$. Clearly, $E_{\Delta}$ is positive
semidefinite and has the form $E_{\Delta}=\alpha_0I+\alpha_1A(\Delta)+\alpha_2(J-I-A(\Delta))$, where $J$ is the
all-ones matrix and $A(\Delta)$ is the adjacency matrix of $\Delta$.

Let $w$ be an eigenvector of $A(\Delta)$ with corresponding eigenvalue $\eta$ that is orthogonal to the all-ones
vector. Then $E_{\Delta}w=(\alpha_0+\alpha_1\eta+\alpha_2(-1-\eta))w$, which implies that
$\alpha_0+\alpha_1\eta+\alpha_2(-1-\eta)\geq0$ as $E_{\Delta}$ is positive-semidefinite. Since $\alpha_0>0$,
$\alpha_1=\alpha_0\frac{\theta}{k}$, and
$\alpha_2=\alpha_0(\frac{\theta^2}{b_1k}-\frac{a_1\theta}{b_1k}-\frac{1}{b_1})$, we find that
$(\theta-k)((1+\eta)\theta-(a_1-k-\eta))\leq0$ and this shows that $(1+\eta)\theta-(a_1-k-\eta)\geq0$ as $\theta< k$.
So we have $\eta(\theta+1)\geq-(\theta+1)-b_1$. This completes the proof.
\end{proof}

We remark that the $2$-coclique extensions of the lattice graphs $L_2(n)$ provide examples of $1$-walk-regular graphs
for which the upper bound for the eigenvalues of the local graphs in the above proposition is not valid. In this case
$\eta_1=a_1=2n-4$ (the local graph consists of $2$ cocktailparty graphs), $b_1=2n-1$, and $\theta_d=-4$.

In what follows the symbol $\delta_{x,y}$ stands for the Kronecker delta, that is, $\delta_{x,y}=1$ if $x=y$ and $0$
otherwise.

\begin{proposition}\label{thm: lacal 2} 
Let $\Gamma$ be a $2$-walk-regular graph with distinct eigenvalues $k=\theta_0>\cdots>\theta_d$. Let $x$ be a
vertex of $\Gamma$ and let $\Delta$ be the subgraph of $\Gamma$ induced on the neighbors of $x$. Let $E$ be
a $2$-walk-regular idempotent with rank $m$ for an eigenvalue $\theta\neq \pm k$. If $m<k$, then
$\theta\in\{\theta_1,\theta_d\}$ and $b:= -1-\frac{b_1}{\theta+1}$ is an eigenvalue of $\Delta$ with multiplicity at
least $k-m+ \delta_{b, a_1}$.
\end{proposition}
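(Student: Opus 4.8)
The plan is to study the representation $x\mapsto\hat x$ associated with the $2$-walk-regular idempotent $E$ of rank $m$, restricted to the local graph $\Delta=\Delta(x)$, and to extract from it a large eigenspace of $A(\Delta)$ for the eigenvalue $b=-1-\frac{b_1}{\theta+1}$. The first step is to recall from Proposition~\ref{prop: local 1} (and its proof) that $E_\Delta=\alpha_0 I+\alpha_1 A(\Delta)+\alpha_2(J-I-A(\Delta))$ is positive semidefinite, and that for an eigenvector $w$ of $A(\Delta)$ with eigenvalue $\eta$ orthogonal to the all-ones vector, $E_\Delta w=(\alpha_0+\alpha_1\eta+\alpha_2(-1-\eta))w$. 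The computation in that proof shows the scalar $\alpha_0+\alpha_1\eta+\alpha_2(-1-\eta)$ is a positive multiple of $(1+\eta)\theta-(a_1-k-\eta)$, which vanishes exactly when $\eta=b$. So $w\mapsto$ its image under $E_\Delta$ kills precisely the $b$-eigenvectors of $A(\Delta)$ that are orthogonal to $\mathbf 1$; equivalently, $\operatorname{rk}E_\Delta$ drops by (at least) the multiplicity of $b$ as an eigenvalue of $A(\Delta)$, where we have to be a little careful about whether $\mathbf 1$ itself lies in the $b$-eigenspace, which happens iff $b=a_1$ — this is where the $\delta_{b,a_1}$ correction enters.

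Next I would bound $\operatorname{rk}E_\Delta$ from above by $m$. Write $E=UU^\top$ with $U$ an $|V|\times m$ matrix whose columns span the eigenspace; then $E_\Delta=U_\Delta U_\Delta^\top$ where $U_\Delta$ consists of the $k$ rows of $U$ indexed by the neighbours of $x$, so $\operatorname{rk}E_\Delta\le m$. Combining with the previous paragraph: if $\mathrm{mult}_\Delta(b)$ denotes the multiplicity of $b$ as an eigenvalue of $\Delta$, then the number of eigenvectors of $A(\Delta)$ with eigenvalue $b$ that are orthogonal to $\mathbf 1$ is at least $\mathrm{mult}_\Delta(b)-\delta_{b,a_1}$, and each such vector lies in the kernel of $E_\Delta$, hence $k-\operatorname{rk}E_\Delta\ge \mathrm{mult}_\Delta(b)-\delta_{b,a_1}$, i.e.
\[
\mathrm{mult}_\Delta(b)\le \operatorname{rk}E_\Delta+\delta_{b,a_1}\le m+\delta_{b,a_1}.
\]
Wait — that is the wrong direction; I actually want a \emph{lower} bound on $\mathrm{mult}_\Delta(b)$. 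The correct move is the reverse: I must show $\operatorname{rk}E_\Delta$ is \emph{small}, not merely $\le m$. Here is the point. By Lemma~\ref{lem: 2-walk x=y} the vectors $\hat y$, $y\sim x$, are $k$ distinct points forming a spherical $2$-distance set, but that alone does not force $\operatorname{rk}E_\Delta<k$. Instead, observe that every column of $U_\Delta$ is obtained from the corresponding column $u$ of $U$ (an eigenvector of $A$ for $\theta$) by restriction to $\Gamma(x)$; from $Au=\theta u$ and looking at the $x$-entry we get $\sum_{y\sim x}u_y=\theta u_x$, so the restriction of $u$ to $\Gamma(x)$ is \emph{not} arbitrary — its sum is determined. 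More usefully, one shows directly that $E_\Delta$ has rank at most $m$ \emph{and} that the all-ones vector behaves predictably: since $E\circ A_1=\alpha_1 A_1$ and $E\circ A_0=\alpha_0 I$, summing a row of $E$ over $\Gamma(x)$ gives $E_\Delta\mathbf 1=(\alpha_1+(k-1-a_1)\alpha_2+\dots)$-type expression, i.e. $\mathbf 1$ is an eigenvector of $E_\Delta$. So $\mathbb R^k$ splits $A(\Delta)$- and $E_\Delta$-invariantly as $\langle\mathbf 1\rangle\oplus\mathbf 1^\perp$, and on $\mathbf 1^\perp$ the operator $E_\Delta$ acts as $p(A(\Delta))$ for the linear polynomial $p(\eta)=\alpha_0+\alpha_1\eta-\alpha_2(1+\eta)$ whose unique root is $b$. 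Hence on $\mathbf 1^\perp$, $\ker E_\Delta=$ the $b$-eigenspace of $A(\Delta)$ intersected with $\mathbf 1^\perp$, which has dimension $\mathrm{mult}_\Delta(b)-\delta_{b,a_1}$; and $\operatorname{rank}(E_\Delta|_{\mathbf 1^\perp})=(k-1)-(\mathrm{mult}_\Delta(b)-\delta_{b,a_1})$. Since $\operatorname{rk}E_\Delta\le m$, we get $(k-1)-\mathrm{mult}_\Delta(b)+\delta_{b,a_1}\le m$, i.e. $\mathrm{mult}_\Delta(b)\ge k-1-m+\delta_{b,a_1}$. That is off by one from the claim, so the final refinement is to check whether $\mathbf 1$ contributes to $\operatorname{rk}E_\Delta$: since $E_\Delta$ is PSD and nonzero on $\mathbf 1$ (its $\mathbf 1$-eigenvalue is a nonnegative combination that one verifies is positive, as $E_{yy}=\alpha_0>0$ forces the trace to be positive and in fact the $\mathbf 1$-component is $\alpha_0+a_1\alpha_1+(k-1-a_1)\alpha_2$ which is $\ge0$ and, when it equals $0$, forces $b=a_1$ and a separate easy contradiction), we have $\operatorname{rk}E_\Delta=1+\operatorname{rank}(E_\Delta|_{\mathbf 1^\perp})\le m$, giving the sharper $\operatorname{rank}(E_\Delta|_{\mathbf 1^\perp})\le m-1$ and hence $\mathrm{mult}_\Delta(b)\ge k-m+\delta_{b,a_1}$, as required. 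Finally, the claim $\theta\in\{\theta_1,\theta_d\}$: if $\theta$ were an intermediate eigenvalue, then by Proposition~\ref{prop: local 1} the value $b=-1-\frac{b_1}{\theta+1}$ would be strictly less than $\eta_1$ \emph{and} strictly greater than $\eta_{k-1}$, so $b$ could only be a repeated intermediate eigenvalue of $\Delta$; but the multiplicity count just established, combined with $m<k$, forces $\mathrm{mult}_\Delta(b)\ge 1$ and interlacing of $\Delta$'s eigenvalues against $\Gamma$'s (or a direct argument via $\theta_1$ being the largest eigenvalue of $\Gamma-x$-type local bounds) rules out the intermediate case — I would run Godsil's argument from \cite[Ch.~13]{g93b} verbatim here.

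The main obstacle I anticipate is precisely the bookkeeping around the all-ones vector: getting the $\delta_{b,a_1}$ correction exactly right requires distinguishing whether $\mathbf 1$ lies in the $b$-eigenspace of $A(\Delta)$ (which is the case $b=a_1$) and separately confirming that $\mathbf 1$ always contributes exactly one to $\operatorname{rk}E_\Delta$ (i.e. $E_\Delta\mathbf 1\ne 0$), which needs the strict positivity of the relevant nonnegative combination of $\alpha_0,\alpha_1,\alpha_2$ — equivalently that $\Delta$ is not edgeless-or-complete in a way that collapses $E_\Delta$. The inequality $k\le\frac{(m+2)(m-1)}{2}$ from Theorem~\ref{thm: bound k with m} is available if one needs to exclude degenerate small cases. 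Everything else — the PSD inequality, the linear-polynomial-in-$A(\Delta)$ description of $E_\Delta$ on $\mathbf 1^\perp$, and the rank bound $\operatorname{rk}E_\Delta\le m$ — is routine once the decomposition $\mathbb R^k=\langle\mathbf 1\rangle\oplus\mathbf 1^\perp$ is in place.
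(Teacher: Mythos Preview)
Your approach matches the paper's: study $E_\Delta = \alpha_0 I + \alpha_1 A(\Delta) + \alpha_2(J-I-A(\Delta))$, split $\mathbb{R}^k = \langle\mathbf{1}\rangle \oplus \mathbf{1}^\perp$, observe that on $\mathbf{1}^\perp$ the matrix $E_\Delta$ acts as the linear polynomial $p(\eta)=(\alpha_1-\alpha_2)\eta+(\alpha_0-\alpha_2)$ in $A(\Delta)$ with unique root $b$, and combine with $\rk E_\Delta \le m$. Two points, however, are not handled correctly.

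First, you never rule out $\theta = -1$. If $\theta = -1$ then $b$ is undefined, and concretely $\alpha_1 = \alpha_2 = -\alpha_0/k$, so your polynomial $p$ degenerates to the nonzero constant $\alpha_0-\alpha_1$. In that case $E_\Delta = \alpha_0 I + \alpha_1(J-I)$ has rank $k$, contradicting $m < k$; this is how the paper opens.

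Second, your treatment of the case $E_\Delta\mathbf{1} = 0$ contains an error. The $\mathbf{1}$-eigenvalue of $E_\Delta$ equals $\alpha_0 + a_1\alpha_1 + b_1\alpha_2 = (\theta^2/k)\alpha_0$, so it vanishes exactly when $\theta = 0$; but then $b = -1 - b_1 = a_1 - k \ne a_1$, so your assertion that vanishing ``forces $b = a_1$'' is false, and there is no immediate contradiction. Without $E_\Delta\mathbf{1}\ne 0$ your count stays off by one, as you noticed. The paper recovers the missing unit of rank by enlarging to the principal submatrix $E_{\overline{\Delta}}$ on $\{x\}\cup\Gamma(x)$: since $\alpha_1 = 0$ when $\theta=0$, the $x$-row of $E_{\overline{\Delta}}$ is $(\alpha_0,0,\dots,0)$, linearly independent of the rows indexed by $\Gamma(x)$, whence $\rk E_{\overline{\Delta}} = 1 + \rk E_\Delta$. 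As $\rk E_{\overline{\Delta}}\le m$ too, one gets $\rk E_\Delta \le m-1$, and then your bookkeeping goes through (with $\delta_{b,a_1}=0$). This passage to $E_{\overline{\Delta}}$ is the one genuinely missing idea in your proposal.
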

\begin{proof}
Let $E_{\Delta}$ be the principal submatrix of $E$ indexed by the vertices of $\Delta$. Clearly,
$E_{\Delta}=\alpha_0I+\alpha_1A(\Delta)+\alpha_2(J-I-A(\Delta))$.

Now it follows first of all that $\theta\neq-1$, because if $\theta=-1$, then \eqref{eq: relation
alpha0 alpha1} and \eqref{eq: relations alpha's} imply that $k\alpha_1=-\alpha_0$ and
$\alpha_0+\alpha_1a_1+\alpha_2(k-1-a_1)=-\alpha_1$, and from this it follows that $\alpha_2=\alpha_1= -\alpha_0/k$.
This implies that $m =\rk(E) \geq \rk(E_{\Delta})=\rk(\alpha_0I+\alpha_1(J-I))=k$, which is indeed a contradiction.

Next, we obtain that $0$ is an eigenvalue of $E_{\Delta}$ with multiplicity at least $k-m$ as $\rk(E_{\Delta})\leq
\rk(E)=m<k$ and $E_{\Delta}$ is a $k\times k$ matrix. Let us the consider the possible eigenvectors for this
eigenvalue.

If $w$ is an eigenvector of $A(\Delta)$ orthogonal to the all-ones vector, with corresponding eigenvalue $\eta$, then
$w$ is an eigenvector of $E_{\Delta}$ with eigenvalue $\alpha_0+\alpha_1\eta+\alpha_2(-1-\eta)$. If the latter
eigenvalue equals $0$, then by a derivation similar to that in the proof of Proposition \ref{prop: local 1} and using
that $\theta \neq -1$, it follows that $\eta=-1-\frac{b_1}{\theta+1}$. By Proposition \ref{prop: local 1}, we have that
$\eta \leq -1-\frac{b_1}{\theta_{d}+1}$, so if $\theta<-1$, then $\theta=\theta_{d}$. Similarly, it follows that if
$\theta>-1$, then $\theta=\theta_1$.

What remains is to check the all-ones eigenvector. The corresponding eigenvalue of $E_{\Delta}$ is equal to
$\alpha_0+\alpha_1a_1+\alpha_2(k-1-a_1)=\theta\alpha_1=(\theta^2/k) \alpha_0$, where the two equalities follow as
before. Because $\alpha_0 \neq 0$, it follows that if this eigenvalue of $E_{\Delta}$ is $0$ then $\theta = 0$. Because
$\theta_1$ and $\theta_d$ are both nonzero, the above shows that in this case $\rk(E_{\Delta})\geq k-1$. Now consider
the subgraph $\overline{\Delta}$ of $\Gamma$ induced on $x$ and its neighbors. Because
$\alpha_1=\alpha_0\frac{\theta}{k}=0$, the corresponding submatrix $E_{\overline{\Delta}}$ of $E$ has rank
$\rk(E_{\Delta})+1$, which is at least $k$, and this contradicts $m<k$.

It thus follows that $\theta\in\{\theta_1,\theta_{d}\}$ and that  $b=-1-\frac{b_1}{1+\theta}$ is an eigenvalue of
$\Delta$ with multiplicity at least $k-m$. Moreover, if $b=a_1$, then the all-ones vector is also an eigenvector for
eigenvalue $b$, so that the multiplicity is at least $k-m+ \delta_{b, a_1}$.
\end{proof}

By taking for the matrix $E$ the minimal idempotent corresponding to an eigenvalue of $\Gamma$ we obtain (cf.~\cite[Thm.~4.4.4]{bcn89} and \cite[Thm.~4.2, Ch. 13]{g93b}):
\begin{corollary}\label{thm: lacal' 2} Let $\Gamma$ be a $2$-walk-regular graph with distinct
eigenvalues $k=\theta_0>\cdots>\theta_d$ and local graph $\Delta$. Let $\theta \neq k$ be an eigenvalue of
$\Gamma$ with multiplicity $m$. If $m<k$, then $\theta\in\{\theta_1,\theta_d\}$ and $b:= -1-\frac{b_1}{\theta+1}$ is an
eigenvalue of $\Delta$ with multiplicity at least $k-m+ \delta_{b, a_1}$.
\end{corollary}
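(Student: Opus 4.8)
The plan is to deduce this directly from Proposition \ref{thm: lacal 2} by choosing $E$ appropriately. Let $\theta \neq k$ be an eigenvalue of $\Gamma$ with multiplicity $m < k$, and let $E := E_\theta$ be the minimal idempotent associated to $\theta$. Then $\rk(E) = m$, and since $\Gamma$ is $2$-walk-regular, $E$ is by definition a $2$-walk-regular idempotent for $\theta$ in the sense used earlier in the paper (the entries of $E$ depend only on distance, for distances up to $2$). The first thing to check is the hypothesis $\theta \neq \pm k$ of Proposition \ref{thm: lacal 2}: we have $\theta \neq k$ by assumption; and if $\theta = -k$ then $m(\theta) = m \geq k$ is impossible here since $m < k \leq k$, but more to the point, the case $\theta = -k$ forces $\Gamma$ bipartite and one should note that then $\Delta$ is the empty graph on $k$ vertices, so the statement must implicitly exclude this — however, $m < k$ rules it out anyway because for $\theta = -k$ one has $m(-k) \geq 1$ with no such small-multiplicity obstruction; the cleanest route is simply to observe that $\theta = -k$ combined with $m < k$ is handled since then $E_\Delta = \alpha_0 I + \alpha_1(J - I) \cdot$ (something) and $\rk(E_\Delta)$ would again be forced up to $k$, contradicting $m < k$, exactly as in the $\theta = -1$ argument inside the proof of Proposition \ref{thm: lacal 2}. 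In fact the simplest thing is to note that Proposition~\ref{thm: lacal 2} already does all the work once we know $\theta \ne \pm k$, so I would just dispatch $\theta \ne -k$ in one line and invoke the Proposition.

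Concretely, I would argue as follows. Suppose first $\theta = -k$; then $u_1 = \theta/k = -1$, and for the minimal idempotent $E_{-k}$ one computes $\alpha_1 = -\alpha_0$, and $\alpha_2$ from \eqref{eq: relations alpha's}, finding that $E_\Delta$ has rank $k$ (it is a scalar multiple of $\alpha_0 I + \alpha_1(J-I)$ up to the triangle terms, which forces full rank just as the $\theta = -1$ case does in the previous proof); this contradicts $\rk(E_\Delta) \leq \rk(E) = m < k$. Hence $\theta \neq \pm k$, and now $E = E_\theta$ is a $2$-walk-regular idempotent of rank $m$ for $\theta$. Applying Proposition \ref{thm: lacal 2} directly yields $\theta \in \{\theta_1, \theta_d\}$ and that $b = -1 - \frac{b_1}{\theta+1}$ is an eigenvalue of $\Delta$ with multiplicity at least $k - m + \delta_{b, a_1}$, which is exactly the claim.

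The only genuine subtlety — and the step I expect to be the main (minor) obstacle — is verifying that $\theta = -k$ cannot occur under the hypothesis $m < k$, or else observing that the corollary should be read with $\Gamma$ not bipartite / $\theta \neq -k$; but since the proof of Proposition \ref{thm: lacal 2} already contains a rank argument for $E_\Delta$ in the degenerate case $\theta = -1$, the same style of argument disposes of $\theta = -k$ quickly, so there is no real difficulty. Everything else is a pure specialization: the hypotheses of Proposition \ref{thm: lacal 2} are met by the minimal idempotent, and the conclusion transfers verbatim. I would therefore keep the proof to a few lines: dispatch $\theta = -k$, then cite Proposition \ref{thm: lacal 2}.
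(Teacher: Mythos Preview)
Your core approach is exactly the paper's: take $E=E_\theta$ to be the minimal idempotent for $\theta$ and invoke Proposition~\ref{thm: lacal 2}. The paper's proof is literally that one sentence.

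You are right that there is a hypothesis mismatch --- Proposition~\ref{thm: lacal 2} assumes $\theta\neq\pm k$ while the corollary only assumes $\theta\neq k$ --- and the paper does not comment on it. However, your attempt to dispose of $\theta=-k$ is incorrect. If $\theta=-k$ then $\Gamma$ is bipartite, so $a_1=0$, $b_1=k-1$, and from $\alpha_1=-\alpha_0$ together with \eqref{eq: relations alpha's} one gets $\alpha_2=\alpha_0$. Since $A(\Delta)=0$, this gives
\[
E_\Delta=\alpha_0 I+\alpha_2(J-I)=\alpha_0 J,
\]
which has rank $1$, not $k$. There is no contradiction with $\rk(E_\Delta)\le m<k$; in fact $m=m(-k)=1$ here. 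Your earlier remarks (that $m(-k)\ge k$, or that $m<k$ ``rules it out'') are also off: for a connected bipartite $k$-regular graph one simply has $m(-k)=1<k$.

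The right way to close the gap is to verify the conclusion directly in this degenerate case: with $\theta=-k=\theta_d$ and $b_1=k-1$ one computes $b=-1-\dfrac{k-1}{-k+1}=0=a_1$, and the empty local graph $\Delta$ has eigenvalue $0$ with multiplicity $k=k-1+1=k-m+\delta_{b,a_1}$. So the corollary holds for $\theta=-k$ as well, and for $\theta\neq\pm k$ your one-line specialization of Proposition~\ref{thm: lacal 2} is all that is needed.
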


We remark that instead of taking the local graph $\Delta$, we may take any regular induced subgraph $\Sigma$ with the
property that every two distinct non-adjacent vertices in $\Sigma$ have distance $2$ in $\Gamma$. See also Koolen
\cite{K94}.

In the next part we are going to derive the `fundamental bound' for 2-walk-regular graphs. This bound was obtained for
distance-regular graphs by Juri\v{s}i\'{c}, Koolen, and Terwilliger \cite{JKTtight}. We follow the proof of this bound
as given by Juri\v{s}i\'{c} and Koolen \cite{JK00} and start with the following lemma. For the convenience of the
reader we provide a proof of it.

\begin{lemma}\label{lem: Jurisic and Koolen} \cite[Thm.~2.1]{JK00}
Let $\Delta$ be a regular graph with valency $k$ and $n$ vertices. Let $k= \eta_0  \geq \cdots \geq
\eta_{n-1}$ be the eigenvalues of $\Delta$. Let $\sigma$ and $\tau$  be numbers such that $\sigma \geq \eta_1 \geq
\eta_{n-1} \geq	\tau$. Then $n(k+ \sigma\tau) \leq (k- \sigma)(k-\tau),$ with equality if and only if
$\eta_i\in\{\sigma,\tau\}$ $(1\leq i\leq n-1)$. In particular, if equality holds then $\Delta$ is empty, complete, or
strongly regular.
\end{lemma}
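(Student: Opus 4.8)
The plan is to exploit the standard eigenvalue-interlacing argument via a suitably chosen matrix, together with the spectral decomposition of the adjacency matrix of $\Delta$. Let $A := A(\Delta)$ be the adjacency matrix of $\Delta$, and recall that the all-ones vector $\mathbf{1}$ is an eigenvector of $A$ with eigenvalue $k$ (since $\Delta$ is $k$-regular on $n$ vertices), and that all other eigenvalues lie in the interval $[\tau,\sigma]$ by hypothesis. The key idea is to consider the matrix $M := (A-\sigma I)(A-\tau I)$. On the orthogonal complement of $\mathbf{1}$, every eigenvalue $\eta_i$ of $A$ contributes an eigenvalue $(\eta_i-\sigma)(\eta_i-\tau) \leq 0$ of $M$ (because $\tau \le \eta_i \le \sigma$), while on $\mathbf{1}$ the eigenvalue of $M$ is $(k-\sigma)(k-\tau) \ge 0$. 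Hence $\mathbf{1}^\top M \mathbf{1} \ge \sum_{\text{all } i} (\eta_i-\sigma)(\eta_i-\tau)\cdot(\text{coefficient})$; more precisely, I would compute the trace-like quantity $\mathbf{1}^\top M \mathbf{1}$ in two ways.

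First I would expand $\mathbf{1}^\top M \mathbf{1} = \mathbf{1}^\top (A^2 - (\sigma+\tau)A + \sigma\tau I)\mathbf{1}$. Using $A\mathbf{1} = k\mathbf{1}$ and $\mathbf{1}^\top\mathbf{1} = n$, this equals $k^2 n - (\sigma+\tau)k n + \sigma\tau n = n(k-\sigma)(k-\tau)$. On the other hand, decompose $\mathbf{1}$ — but $\mathbf{1}$ is itself an eigenvector, so instead I would argue as follows: write $M = \sum_i \mu_i P_i$ where the $P_i$ are the spectral idempotents of $A$ and $\mu_i = (\eta_i-\sigma)(\eta_i-\tau)$. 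Then $n(k-\sigma)(k-\tau) = \mathbf{1}^\top M \mathbf{1} = \sum_i \mu_i \|P_i \mathbf{1}\|^2 = (k-\sigma)(k-\tau)\|P_0\mathbf{1}\|^2 + \sum_{i\ge 1}\mu_i\|P_i\mathbf{1}\|^2$. Since $\mathbf{1}$ lies entirely in the $\eta_0=k$ eigenspace, $\|P_0\mathbf{1}\|^2 = n$ and $\|P_i\mathbf{1}\|^2 = 0$ for $i \ge 1$, which gives an identity rather than an inequality — so that particular quadratic form is too crude. The right move is instead to use the all-ones \emph{matrix} $J$ and the inequality $\tr(MJ) \le 0$ on the complement, which I spell out next.

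The cleaner route: observe $M \succeq 0$ is false, but $M$ restricted to $\mathbf{1}^\perp$ is negative semidefinite, i.e. $M + c J \succeq 0$ is not quite it either; rather, $-M$ is positive semidefinite on $\mathbf{1}^\perp$, so $(n M + \text{something})$... The genuinely correct and standard argument is: since $(\eta_i - \sigma)(\eta_i-\tau) \le 0$ for $i \ge 1$, we have $\tr M = (k-\sigma)(k-\tau) + \sum_{i\ge1}(\eta_i-\sigma)(\eta_i-\tau) \le (k-\sigma)(k-\tau)$. Now compute $\tr M = \tr(A^2) - (\sigma+\tau)\tr A + \sigma\tau\, n = nk - 0 + \sigma\tau n = n(k+\sigma\tau)$, using $\tr(A^2) = nk$ (number of closed walks of length $2$ equals $nk$) and $\tr A = 0$. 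Combining, $n(k+\sigma\tau) \le (k-\sigma)(k-\tau)$, which is the desired bound. Equality holds iff $\sum_{i\ge1}(\eta_i-\sigma)(\eta_i-\tau) = 0$, i.e. iff each term vanishes, i.e. iff $\eta_i \in \{\sigma,\tau\}$ for all $i \ge 1$. Finally, if equality holds then $\Delta$ has at most three distinct eigenvalues ($k$, $\sigma$, $\tau$, possibly with coincidences), and a connected regular graph with at most three distinct eigenvalues is complete or strongly regular, while a disconnected one (or the degenerate cases where $\sigma$ or $\tau$ equals $k$, or where $A$ has a single eigenvalue) is empty or a disjoint union of complete graphs — I would dispatch these degenerate possibilities by noting $\Delta$ regular with two distinct eigenvalues is a disjoint union of complete graphs (hence empty if $k=0$), and with three is strongly regular after possibly passing to connected components, the standard facts. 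The only mild obstacle is bookkeeping in this last equality-case classification, making sure the "empty" and "complete" degenerate cases are correctly accounted for when $\Delta$ need not be connected; everything else is a one-line trace computation.
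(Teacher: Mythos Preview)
Your proposal is correct and, once you arrive at the trace computation for $M=(A-\sigma I)(A-\tau I)$, it is exactly the paper's argument: the paper just writes $\sum_{i=1}^{n-1}(\eta_i-\sigma)(\eta_i-\tau)\le 0$ and expands using $\sum\eta_i=0$ and $\sum\eta_i^2=nk$, which is your $\tr M$ identity in eigenvalue form. The initial detours through $\mathbf{1}^\top M\mathbf{1}$ and $J$ can simply be deleted, and the equality-case classification is left as a standard fact in the paper as well.
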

\begin{proof}
Note that by assumption of the lemma we have
\begin{equation}
\label{eq:fundmental}
\sum_{i=1}^{n-1}(\eta_i-\sigma)(\eta_i-\tau)\leq0.
\end{equation}
As $\displaystyle\sum_{i=0}^{n-1}\eta_i=0$, $\displaystyle\sum_{i=0}^{n-1}\eta_i^2=nk$ and $\eta_0=k$, the inequality
in the lemma immediately follows.

In case of equality we obtain equality in \eqref{eq:fundmental} which in turn implies that $\eta_i\in\{\sigma,\tau\}$
$(1\leq i\leq n-1)$.
\end{proof}

As a consequence of Proposition~\ref{prop: local 1} and Lemma~\ref{lem: Jurisic and Koolen} we obtain the following
`fundamental bound' (cf.~\cite[Thm.~6.2]{JKTtight} and \cite[Thm.~2.1]{JK00}).

\begin{theorem} 
Let $\Gamma$ be a $2$-walk-regular graph with distinct eigenvalues $k=\theta_0>\cdots>\theta_d$. Then
$$(\theta_1 +\frac{k}{ a_1 +1}  )(\theta_d +\frac{k}{ a_1 +1 } )\geq -\frac{k a_1 b_1}{  (a_1 +1)^2}.$$
If $a_1 \neq 0$, then equality holds if and only if every local graph $\Delta$ is strongly regular with eigenvalues
$a_1$, $-1-\frac{b_1}{\theta_d+1}$, and $-1-\frac{b_1}{\theta_1+1}$. If $a_1=0$, then equality holds if and only if
$\Gamma$ is bipartite.
\end{theorem}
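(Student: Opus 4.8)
The plan is to apply Lemma~\ref{lem: Jurisic and Koolen} to a local graph $\Delta$ (which is regular with valency $a_1$ on $k$ vertices by $2$-walk-regularity) with the specific choice $\sigma = -1 - \frac{b_1}{\theta_1+1}$ and $\tau = -1 - \frac{b_1}{\theta_d+1}$. Proposition~\ref{prop: local 1} tells us exactly that $\eta_1 \leq \sigma$ and $\eta_{k-1} \geq \tau$, so the hypotheses of the lemma are met (one should note $\theta_1 + 1 > 0$ and $\theta_d + 1$ can be negative, but $b_1 > 0$, $\theta_d > -k \geq -k$ gives $\theta_d+1 \neq 0$ unless $\theta_d = -1$; if $\theta_d = -1$ the bound for $\eta_1$ is vacuous or must be handled separately — actually when $\theta_d=-1$ we have $\eta_1 \le -1 - b_1/0$, so I would treat that degenerate case by noting $\Gamma$ is then complete multipartite or argue directly). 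Feeding $\sigma,\tau$ into the inequality $k(a_1 + \sigma\tau) \leq (a_1 - \sigma)(a_1 - \tau)$ and clearing denominators — here $a_1 - \sigma = a_1 + 1 + \frac{b_1}{\theta_1+1} = \frac{(a_1+1)(\theta_1+1)+b_1}{\theta_1+1} = \frac{(a_1+1)\theta_1 + k}{\theta_1+1}$ using $a_1 + b_1 + 1 = k$, and similarly for $\tau$ — should, after routine algebra, rearrange into the stated inequality $\bigl(\theta_1 + \frac{k}{a_1+1}\bigr)\bigl(\theta_d + \frac{k}{a_1+1}\bigr) \geq -\frac{k a_1 b_1}{(a_1+1)^2}$.

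For the equality analysis when $a_1 \neq 0$: by Lemma~\ref{lem: Jurisic and Koolen}, equality in the fundamental bound forces equality in the lemma applied to every local graph $\Delta$ (the bound holds for each $\Delta$, so if the $\Gamma$-level quantities give equality, each $\Delta$ must be extremal), hence each $\Delta$ has all its non-principal eigenvalues in $\{\sigma, \tau\}$ and is therefore empty, complete, or strongly regular with eigenvalues $a_1, \sigma, \tau$. Since $a_1 \neq 0$, $\Delta$ is not empty; I would rule out $\Delta$ complete by observing that this would make $\Gamma$ locally complete, forcing (with $2$-walk-regularity) $\Gamma$ to be a disjoint union of cliques or complete multipartite-like structure incompatible with having the eigenvalue structure $\theta_1 > \theta_d$ properly — more carefully, $\Delta = K_{a_1+1}$ means $a_1 = k-1$, so $b_1 = 0$, and then the right-hand side is $0$ and the left-hand side is $(\theta_1 + 1)(\theta_d+1)$, which is negative unless $\theta_d = -1$, i.e. $\Gamma = K_k$; so in the relevant case $\Delta$ must be strongly regular. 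Conversely, if every $\Delta$ is strongly regular with those eigenvalues, the multiplicities of $\sigma$ and $\tau$ as eigenvalues of $\Delta$ are forced and summing $\sum \eta_i = 0$, $\sum \eta_i^2 = k a_1$ gives equality in \eqref{eq:fundmental}, hence equality in the bound. For $a_1 = 0$: then $\Gamma$ is triangle-free, each local graph is empty, $\sigma\tau$ relation degenerates, and equality $\bigl(\theta_1 + k\bigr)\bigl(\theta_d + k\bigr) = 0$ forces $\theta_d = -k$, which for a connected regular graph is equivalent to bipartiteness; conversely bipartite gives $\theta_d = -k$ and the equality.

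The main obstacle I anticipate is the bookkeeping in the equality case — specifically, translating "equality at the $\Gamma$-level" into "equality at every $\Delta$" cleanly, and handling the sign of $\theta_d + 1$ and the borderline cases ($\theta_d = -1$, $b_1 = 0$, $\theta = 0$ issues touched on in Proposition~\ref{thm: lacal 2}) so that Lemma~\ref{lem: Jurisic and Koolen} genuinely applies with $\sigma \geq \eta_1 \geq \eta_{k-1} \geq \tau$. A secondary subtlety: to conclude the converse direction one needs that strong regularity of $\Delta$ with the prescribed spectrum pins down the eigenvalue multiplicities, which follows from $\operatorname{tr} A(\Delta) = 0$ and $\operatorname{tr} A(\Delta)^2 = k a_1$ together with $\eta_0 = a_1$ — a short computation but one that must be stated. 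Everything else is the forward substitution into Lemma~\ref{lem: Jurisic and Koolen} and algebraic simplification using $a_1 + b_1 + 1 = k$.
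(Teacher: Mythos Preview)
Your approach is exactly the paper's: apply Lemma~\ref{lem: Jurisic and Koolen} to the local graph $\Delta$ using the bounds from Proposition~\ref{prop: local 1}, then simplify with $a_1+b_1+1=k$. One slip to fix: you have $\sigma$ and $\tau$ interchanged---Proposition~\ref{prop: local 1} gives $\eta_1 \leq -1-\dfrac{b_1}{\theta_d+1}$ and $\eta_{k-1} \geq -1-\dfrac{b_1}{\theta_1+1}$, so it is $\sigma=-1-\dfrac{b_1}{\theta_d+1}$ and $\tau=-1-\dfrac{b_1}{\theta_1+1}$ (the lemma's inequality is symmetric in $\sigma,\tau$, so the algebra survives, but your sentence ``Proposition~\ref{prop: local 1} tells us exactly that $\eta_1\le\sigma$'' is false as written). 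Your worry about $\theta_d=-1$ or $\theta_1=-1$ is unnecessary: a connected regular non-complete graph has $\theta_d<-1<\theta_1$, and a $2$-walk-regular graph has diameter at least $2$.
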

\begin{proof}
Let $x$ be a vertex of $\Gamma$ and let $\Delta:=\Delta(x)$ be the subgraph of $\Gamma$ induced on the neighbors of
$x$. Let $a_1=\eta_0\geq\cdots\geq\eta_{k-1}$ be the eigenvalues of $\Delta$. Let
$\sigma=-1-\frac{b_1}{\theta_d+1}$ and $\tau=-1-\frac{b_1}{\theta_1+1}$. Then we have
$\sigma\geq\eta_1\geq\eta_{k-1}\geq\tau$ by Proposition~\ref{prop: local 1}. As $\Delta$ is a regular graph with
valency $a_1$ and $k$ vertices, we obtain the fundamental bound by reformulating the inequality in Lemma~\ref{lem:
Jurisic and Koolen}; we omit the technical details. If $a_1=0$ (and the local graph is empty), then equality holds if
and only if $\theta_d=-k$.
\end{proof}

\section{Small multiplicity}\label{sec:smallmult}
This section is devoted to study $t$-walk-regular graphs having eigenvalues with small multiplicity. We start by
answering the following question: How small can the multiplicity of an eigenvalue be of a $t$-walk-regular graph that
is not distance-regular? Afterwards, in Sections \ref{subsec: 1-walk-reg small mult} and \ref{subsec: 2-walk-reg small
mult}, we will use this answer and the results in the previous sections to describe $1$- and $2$-walk-regular graphs
having an eigenvalue (with absolute value smaller than the spectral radius) with small multiplicity.

\subsection{Distance-regularity from a small multiplicity}
Dalf\'o, Van Dam, Fiol, Garriga and Gorissen \cite{DvDFGG11} posed the following problem: What is the smallest $t$ such
that every $t$-walk-regular graph is distance-regular? More precisely, they considered $t$ as a function of either the
diameter $D$ of $\G$ or the number $d+1$ of distinct eigenvalues. We will give an answer to this question, but in terms
of the minimum multiplicity of an eigenvalue $\theta\neq \pm k$ of $\G$ (where $k$ is the valency), or actually a bit stronger, in terms of the minimum rank of a $t$-walk-regular idempotent for $\theta$. Notice that the
minimum multiplicity is related to $d$ and the number of vertices. The following result follows from revisiting the
proof of a result by Godsil \cite[Thm.~3.2]{g88}.

\begin{proposition}\label{prop: m-walk-regular mult m b_m=1}
Let $t\geq 2$ and let $\Gamma$ be a $t$-walk-regular graph with valency $k \geq 3$ and diameter $D>t$. If $\G$ has a $t$-walk-regular idempotent for an eigenvalue $\theta\neq \pm k$ with rank at most $t$, then $b_t=1$.
\end{proposition}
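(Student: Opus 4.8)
The plan is to work with a $t$-walk-regular idempotent $E$ of rank $m\le t$ for $\theta\neq\pm k$, and with the associated spherical representation $x\mapsto\hat x$ in $\R^m$, normalized so that $\|\hat x\|^2=\alpha_0$. Since $\Gamma$ is $t$-walk-regular, the cosines $u_0=1,u_1,\dots,u_t$ depend only on the distance; moreover $AU=\theta U$ gives the local recurrence $\theta\hat x=\sum_{y\sim x}\hat y$, and \eqref{eq: relation alpha0 alpha1}--\eqref{eq: relations alpha's} hold. First I would fix a vertex $x$ and argue, exactly as in the discussion preceding Theorem~\ref{thm: bound k with m} together with Lemma~\ref{lem: 2-walk x=y}, that the images $\{\hat y : y\in\Gamma_j(x)\}$ are pairwise distinct for each $j\le t$; the key point (due to Godsil) is that when the ambient dimension $m$ is small, the geometry is forced. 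Concretely, for $j$ with $1\le j\le t$, consider a vertex $z$ with $\dist(x,z)=j-1$ and look at its neighbours: by the recurrence $\theta\hat z=\sum_{w\sim z}\hat w$, the $c_j$ neighbours of $z$ in $\Gamma_{j-1}(x)$, the $a_j$ in $\Gamma_j(x)$, and the $b_j$ in $\Gamma_{j+1}(x)$ (here I would replace $z$ by a vertex $z'\in\Gamma_t(x)$ to talk about $b_t$) have prescribed cosines with $\hat x$.

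The crux is a dimension count for the vectors living ``just outside'' distance $t$. Take $z\in\Gamma_t(x)$. Its neighbours split into $\Gamma_{t-1}(x)$ (there are $c_t$ of them, each with cosine $u_{t-1}$ relative to $\hat x$), $\Gamma_t(x)$ ($a_t$ of them, cosine $u_t$), and $\Gamma_{t+1}(x)$ ($b_t$ of them). I would project everything onto the line spanned by $\hat x$ and its orthogonal complement. Writing $\hat y = u_{\dist(x,y)}\hat x + \hat y^{\perp}$ with $\hat y^\perp\perp\hat x$, the recurrence $\theta\hat z=\sum_{w\sim z}\hat w$ splits into the scalar identity \eqref{eq: relations alpha's} and the orthogonal identity $\theta\hat z^\perp=\sum_{w\sim z}\hat w^\perp$. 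The vectors $\hat z^\perp$ for $z\in\Gamma_t(x)$, together with the $\hat w^\perp$ for $w\in\Gamma_{t+1}(x)$ adjacent to such $z$, all lie in the $(m-1)$-dimensional space $\hat x^\perp$. Godsil's argument (the proof of \cite[Thm.~3.2]{g88}) is, in effect, that one can further peel off the subspace spanned by $\{\hat y^\perp : \dist(x,y)\le t-1\}$ and show that, modulo that subspace, each $\hat z^\perp$ ($z\in\Gamma_t(x)$) together with the contributions of its $b_t$ neighbours in $\Gamma_{t+1}(x)$ occupies enough independent directions that $b_t\ge 2$ would force the total dimension to exceed $m$. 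Since $m\le t$, this is the contradiction, and it yields $b_t\le 1$; as $D>t$ we have $b_t\ge 1$, hence $b_t=1$.

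I expect the main obstacle to be making the ``peeling'' rigorous: one must set up the right filtration of $\R^m$ by the subspaces $W_j=\mathrm{span}\{\hat y : \dist(x,y)\le j\}$ and control $\dim(W_j/W_{j-1})$, using $2$-walk-regularity (hence well-definedness of the $p^h_{ij}$ and the intersection numbers up to distance $t$) to guarantee that distinct neighbours of $z$ in $\Gamma_{t+1}(x)$ genuinely contribute linearly independent vectors modulo $W_t$. The cleanest route is to mimic Godsil's computation of the Gram matrix of the relevant vectors: its rank is at most $m$, and evaluating it via the cosines $u_{t-1},u_t,u_{t+1}$ and the intersection numbers $c_t,a_t,b_t$ shows that $b_t\ge 2$ makes this Gram matrix have rank $>m$. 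One subtlety to check is that $u_{t+1}$, which is \emph{not} a priori constant since $\Gamma$ is only $t$-walk-regular, does not actually enter — only the \emph{differences} $\hat w^\perp$ of neighbours of a fixed $z$ matter, and these are controlled purely by the within-$z$-neighbourhood combinatorics. I would also keep Lemma~\ref{lem: 2-walk x=y} and the hypothesis that $\Gamma$ is not complete multipartite in reserve to rule out the degenerate collinear cases that would otherwise collapse the dimension count; since $D>t\ge 2$, $\Gamma$ is automatically not complete multipartite, which tidies this up.
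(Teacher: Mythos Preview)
Your proposal is a sketch with an acknowledged gap (``making the peeling rigorous''), and the approach you outline---a filtration $W_j=\mathrm{span}\{\hat y:\dist(x,y)\le j\}$ and a Gram-matrix rank count at distances $t-1,t,t+1$---is not Godsil's argument, nor the paper's. The genuine missing idea is a \emph{path-shift} trick, and without it your dimension count does not close: you correctly worry that $u_{t+1}$ is not constant, and ``only differences of neighbours of a fixed $z$ matter'' does not by itself force those differences to be independent of anything in $W_t$, nor does it let you compare contributions across different $z\in\Gamma_t(x)$.

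The paper's proof is short and concrete. Take a geodesic $x_0,x_1,\dots,x_t$. Since $\rk E\le t$, the $t+1$ vectors $\hat x_0,\dots,\hat x_t$ are dependent; let $q$ be maximal with $\hat x_0,\dots,\hat x_q$ independent, so $q+1\le t$ and $\hat x_{q+1}=\sum_{i=0}^q p_i\hat x_i$. The key step is that for \emph{any} neighbour $z$ of $x_{q+1}$ with $\dist(x_0,z)=q+2$ one has $\hat z=\sum_{i=0}^q p_i\hat x_{i+1}$. This holds because the Gram matrix of $(\hat x_0,\dots,\hat x_{q+1})$ and that of $(\hat x_1,\dots,\hat x_{q+1},\hat z)$ are identical: every pairwise distance occurring in either tuple is at most $q+1\le t$, so every inner product is the same $\alpha_{|i-j|}$. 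Hence the same linear relation holds after the shift, and $\hat z$ is determined independently of $z$. If $b_{q+1}\ge 2$, two such neighbours $z_1,z_2$ (at mutual distance $\le 2$) satisfy $\hat z_1=\hat z_2$, contradicting Lemma~\ref{lem: 2-walk x=y} (and $D>t\ge 2$ rules out complete multipartite). Thus $b_{q+1}=1$, and monotonicity of the $b_j$'s gives $b_t\le b_{q+1}=1$.

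Note in particular that the argument does not operate at distances $t$ and $t+1$; it operates at the first point of dependence along a single geodesic, which can be well before $t$. This is precisely what lets one avoid ever touching the (non-constant) cosine at distance $t+1$.
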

\begin{proof}
Consider the representation associated to the $t$-walk-regular idempotent $E$ for $\theta$. Let $x=x_0$ and $y=x_t$ be two vertices at distance $t$ in $\Gamma$, and let $P=x_0\ldots x_{t}$ be a (shortest)
path joining them. Let $Q=x_0\ldots x_q$ be the longest subpath of $P$ starting at $x$ such that
$\{\hat{x}_i\}_{0\leq i\leq q}$ are linearly independent (clearly $q+1\leq t$, since the maximum number of linearly
independent $\hat{x}_i$ is at most the rank of $E$). Therefore,
$\hat{x}_{q+1}=p_0\hat{x}_0+p_1\hat{x}_1+\cdots+p_q\hat{x}_q$, for certain coefficients $p_i$. If $z$ is a vertex
adjacent to $x_{q+1}$ that is at distance $q+2$ from $x$, then we claim that
$\hat{z}=p_0\hat{x}_1+p_1\hat{x}_2+\cdots+p_q\hat{x}_{q+1}$. Indeed,
$$
0=\| x_{q+1}-(p_0\hat{x}_0+p_1\hat{x}_1+\cdots+p_q\hat{x}_q)\|=\|
z-(p_0\hat{x}_1+p_1\hat{x}_2+\cdots+p_q\hat{x}_{q+1})\|,
$$
which holds because $\langle \hat{z}, \hat{x}_{q+2-i} \rangle =\langle \hat{x}_{j+i}, \hat{x}_{j}\rangle =\alpha_i$ for
$i=1,\dots,q+1$ and $j=0,\dots,q+1-i$.

Recall that the intersection numbers $a_i$, $b_i$, and $c_i$ are well-defined for $i=0,\dots,t$. Suppose now that
$b_{q+1}>1$, and let $z_1$ and $z_2$ be two vertices adjacent to $x_{q+1}$ and at distance $q+2$ from $x$. Then by the
above, we have that $\hat{z}_1=p_0\hat{x}_1+p_1\hat{x}_2+\cdots+p_q\hat{x}_{q+1}=\hat{z}_2$. By Lemma~\ref{lem: 2-walk
x=y}, this implies that $\theta=\pm k$, a contradiction, so $b_{q+1}=1$. Now observe that in the same way as for
distance-regular graphs (see \cite[Prop.~4.1.6]{bcn89}), we have that $b_i\leq b_j$ if $j\leq i\leq t$. Therefore
$b_t=1$.
\end{proof}
\begin{proposition}\label{prop: t-wr bt=1 DRG}
Let $\Gamma$ be a $t$-walk-regular graph. If $b_t=1$, then $\Gamma$ is distance-regular.
\end{proposition}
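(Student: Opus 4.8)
The plan is to show that $b_t = 1$ forces the intersection numbers to be well-defined for \emph{all} distances, not just up to $t$, and that they satisfy the recurrence defining a distance-regular graph. The key structural fact is that $b_t = 1$ (together with $t$-walk-regularity) means each vertex at distance $t$ from a fixed vertex $x$ has a \emph{unique} neighbor further away; this rigidity should propagate outward along the distance partition around $x$.

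First I would recall that in a $t$-walk-regular graph the numbers $p^h_{ij} = |\Gamma_i(x) \cap \Gamma_j(y)|$ are well-defined for $h,i,j \le t$, and in particular $a_j, b_j, c_j$ are well-defined for $0 \le j \le t$; moreover $b_i \le b_j$ whenever $j \le i \le t$, exactly as in the distance-regular case (\cite[Prop.~4.1.6]{bcn89}). So $b_t = 1$ already forces $b_j \ge 1$ to be understood carefully: actually we get $1 = b_t \le b_j$ for $j\le t$ gives nothing, but the monotonicity is what we want going the other way. The real point is to extend beyond distance $t$. Fix a vertex $x$ and consider the distance partition $\{x\}, \Gamma_1(x), \dots, \Gamma_D(x)$. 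For a vertex $y \in \Gamma_t(x)$, $b_t(x,y) = 1$ says $y$ has exactly one neighbor in $\Gamma_{t+1}(x)$. I would then argue, by induction on $j \ge t$, that every vertex $z \in \Gamma_j(x)$ has exactly one neighbor in $\Gamma_{j+1}(x)$ and that $c_j(x,z)$, $a_j(x,z)$, $b_j(x,z)$ depend only on $j$: the unique-upward-neighbor condition means the subgraph induced on $\Gamma_{\ge t}(x)$ is, roughly, a union of paths hanging off $\Gamma_t(x)$, so counting neighbors at distances $j-1, j, j+1$ becomes forced once one knows the valency $k$ and the values at distance $t$. Concretely, if $z \in \Gamma_{j}(x)$ with $j > t$ has unique up-neighbor, then walking back from $z$ toward $x$ along any shortest path, and using that $b_{j-1} \le b_t = 1$ at each step once $j-1 \ge t$, one pins down the local structure; I expect $c_{j} = 1$ to follow as well for $j > t$ (since otherwise one builds two vertices at distance $t$ from some vertex with the same representation image, contradicting Lemma~\ref{lem: 2-walk x=y} applied from an appropriate base point, or one directly gets $b$ too large), collapsing the tail into actual paths and making all parameters constant.

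Having established that $c_j, a_j, b_j$ are well-defined (independent of the choice of $x$ and of $y \in \Gamma_j(x)$) for every $j$ with $0 \le j \le D$, the graph is distance-regular by definition. The cleanest packaging is: $t$-walk-regularity gives the parameters up to distance $t$; the condition $b_t = 1$ together with $b_j \le b_t$ for $j \ge t$ forces $b_j \in \{0,1\}$ for $j \ge t$; the monotonicity plus $k$-regularity then forces $c_j, a_j$ at those distances, and a short argument (invoking Lemma~\ref{lem: 2-walk x=y} to rule out coincident representation images, or a direct combinatorial count) shows the values do not depend on the chosen vertices. The main obstacle I anticipate is the base-point independence at distances between $t$ and $D$: $t$-walk-regularity only controls balls of radius $t$, so to show that, say, $c_{t+1}(x,z)$ is the same for all $x$ and all $z \in \Gamma_{t+1}(x)$, one cannot just quote the hypothesis — one has to use the rigidity from $b_t = 1$ to transport information from radius-$t$ neighborhoods outward. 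I would handle this by showing that for $z \in \Gamma_{j}(x)$ with $j \ge t$, a shortest $x$--$z$ path is essentially unique from distance $t$ onward (because of the unique-upward-neighbor property read in reverse), so that $z$'s neighborhood decomposes canonically, and the relevant counts reduce to counts inside a radius-$t$ ball centered at the point of the path at distance $t$ from $x$, where $t$-walk-regularity does apply.
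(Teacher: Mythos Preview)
Your approach has a genuine gap. The claim that $c_j = 1$ for $j > t$ is false in general: take $\Gamma$ to be the $n$-cube $Q_n$, which is distance-regular with $b_i = n-i$ and $c_i = i$. With $t = n-1$ we have $b_t = 1$ but $c_{t+1} = c_n = n$, so the ``union of paths hanging off $\Gamma_t(x)$'' picture is simply wrong, and the appeal to Lemma~\ref{lem: 2-walk x=y} does not rescue it. More broadly, there is no purely local combinatorial mechanism that pins down $c_{t+1}(x,z)$ from $b_t = 1$ alone: knowing that each $y \in \Gamma_t(x)$ has a unique neighbor in $\Gamma_{t+1}(x)$ tells you nothing directly about how many neighbors a vertex of $\Gamma_{t+1}(x)$ has back in $\Gamma_t(x)$. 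Your invocation of the monotonicity $b_j \le b_t$ for $j \ge t$ is also circular: that inequality only makes sense once $b_j$ is already known to be well-defined, which is part of what you are trying to prove.

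The paper's argument sidesteps this by working spectrally rather than combinatorially. Instead of trying to show the intersection numbers at distance $t+1$ are well-defined, one shows directly that $\Gamma$ is $(t+1)$-walk-regular, and then inducts (using $b_{t+1} \le b_t = 1$, which \emph{is} now legitimate once $(t+1)$-walk-regularity is established). For the inductive step, take $x, z$ at distance $t+1$ and any $y \in \Gamma_t(x) \cap \Gamma(z)$; since $b_t = 1$, the unique neighbor of $y$ in $\Gamma_{t+1}(x)$ is $z$, so the $(x,y)$-entry of $AE = \theta E$ for a minimal idempotent $E$ reads $c_t \alpha_{t-1} + a_t \alpha_t + E_{xz} = \theta \alpha_t$. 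Hence $E_{xz}$ is determined by $t$-walk-regular data and depends only on $\dist(x,z) = t+1$. Equivalently in walk language: the number of walks of length $\ell$ from $x$ to $z$ is forced by the number of walks of length $\ell+1$ from $x$ to $y$ minus the contributions from the neighbors of $y$ at distances $t-1$ and $t$ from $x$. This single-line spectral computation is the idea your sketch is missing.
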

\begin{proof}
We will show that if $t<D$ and $b_t=1$, then $\Gamma$ is also $(t+1)$-walk-regular. Since $b_{t+1}\leq b_t$
(cf.~\cite[Prop.~4.1.6]{bcn89}), the statement then follows by induction.

Let $x$ and $z$ be vertices at distance $t+1$, and let $y$ be a neighbor of $z$ at distance $t$ from $x$. Because
$b_t=1$, the only neighbor of $y$ at distance $t+1$ from $x$ is $z$. Let $E$ be the minimal idempotent of an eigenvalue
$\theta$ of $\Gamma$. By considering the $(x,y)$-entry of $AE=\theta E$, we thus obtain that
$c_t\alpha_{t-1}+a_t\alpha_t+E_{xz}=\theta\alpha_t.$ This shows that $E_{xz}$ does not depend on $x$ and $z$, but only
on their distance $t+1$. Therefore $\Gamma$ is $(t+1)$-walk-regular.
\end{proof}

We remark that the second part of the proof generalizes, in the sense that it actually proves that if $E$ is a
$t$-walk-regular idempotent for an eigenvalue in a $t$-walk-regular graph with $b_t=1$, then $E$ is a
$(t+1)$-walk-regular idempotent. The following result now follows immediately.

\begin{theorem} \label{thm: m-walk multiplicity m implies DRG}
Let $\Gamma$ be a $t$-walk-regular graph with a $t$-walk-regular idempotent for an eigenvalue $\theta\neq \pm k$ with rank at most $t$. If $t\geq 2$, then $\Gamma$ is distance-regular.
\end{theorem}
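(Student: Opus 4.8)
The plan is to combine the two preceding propositions in the obvious way. Theorem~\ref{thm: m-walk multiplicity m implies DRG} asserts that a $t$-walk-regular graph ($t \geq 2$) with a $t$-walk-regular idempotent for $\theta \neq \pm k$ of rank at most $t$ is distance-regular, so the entire content is already packaged into Proposition~\ref{prop: m-walk-regular mult m b_m=1} (which gives $b_t = 1$) and Proposition~\ref{prop: t-wr bt=1 DRG} (which concludes distance-regularity from $b_t = 1$).

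First I would dispose of the trivial cases. If $k \leq 2$, then every connected graph of valency at most $2$ (a path or a cycle) is distance-regular, so there is nothing to prove; hence I may assume $k \geq 3$. Next, if $D \leq t$, then a $t$-walk-regular graph with diameter $D \leq t$ is automatically $D$-walk-regular, and as noted in the Preliminaries a $D$-walk-regular graph is distance-regular — so again there is nothing to prove. Thus I may assume $k \geq 3$ and $D > t$, which are precisely the hypotheses of Proposition~\ref{prop: m-walk-regular mult m b_m=1}.

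Then I would simply invoke Proposition~\ref{prop: m-walk-regular mult m b_m=1} with the given $t$-walk-regular idempotent for $\theta \neq \pm k$ of rank at most $t$ to conclude $b_t = 1$, and then invoke Proposition~\ref{prop: t-wr bt=1 DRG} to conclude that $\Gamma$ is distance-regular. That completes the argument.

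There is essentially no obstacle here: the real work has already been done in the two propositions (the first resting on the non-collinearity Lemma~\ref{lem: 2-walk x=y} and the monotonicity $b_i \leq b_j$ for $j \leq i \leq t$; the second on a short entrywise computation with $AE = \theta E$ together with an induction on $t$). The only thing to be careful about is making the case distinction on $k$ and on $D$ versus $t$ explicit, so that the hypotheses of Proposition~\ref{prop: m-walk-regular mult m b_m=1} are genuinely met before it is applied; I would state those reductions in one or two sentences and then cite the two propositions.
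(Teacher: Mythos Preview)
Your proposal is correct and follows exactly the approach the paper intends: the paper simply states that the theorem ``follows immediately'' from Propositions~\ref{prop: m-walk-regular mult m b_m=1} and~\ref{prop: t-wr bt=1 DRG}, and you have filled in precisely the case distinctions (on $k$ and on $D$ versus $t$) needed to make the hypotheses of Proposition~\ref{prop: m-walk-regular mult m b_m=1} apply.
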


Let us stress once more that if $\G$ has an eigenvalue $\theta\neq \pm k$ with multiplicity at most
$t$, then it has a corresponding $t$-walk-regular idempotent for $\theta$ with rank at most $t$,
so we obtain the following result.
\begin{corollary} \label{cor: m-walk multiplicity m implies DRG} Let $\Gamma$ be a $t$-walk-regular
graph with an eigenvalue $\theta\neq \pm k$ with multiplicity at most $t$. If $t\geq 2$, then $\Gamma$ is
distance-regular.
\end{corollary}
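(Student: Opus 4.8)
The plan is to derive the corollary from Theorem \ref{thm: m-walk multiplicity m implies DRG} by exhibiting, from an eigenvalue $\theta\neq\pm k$ of multiplicity $m\le t$, a $t$-walk-regular idempotent for $\theta$ of rank at most $t$. The natural candidate is simply the minimal idempotent $E=E_\theta$ itself: since $\Gamma$ is $t$-walk-regular, the minimal idempotents satisfy $A_j\circ E_\theta=\alpha_j A_j$ for $0\le j\le t$ (this is precisely the characterization of $t$-walk-regularity recalled in the Preliminaries, via the Spectral Decomposition Theorem), so $E_\theta$ is a $t$-walk-regular idempotent in the sense defined there. Moreover $AE_\theta=\theta E_\theta$ holds by construction, so $E_\theta$ is a $t$-walk-regular idempotent \emph{for eigenvalue} $\theta$. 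Its rank equals $m(\theta)=m\le t$.

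With that observation in hand the proof is immediate: I would simply invoke Theorem \ref{thm: m-walk multiplicity m implies DRG} with this $E_\theta$, whose rank is $m\le t$, to conclude that $\Gamma$ is distance-regular, using the hypothesis $t\ge 2$. So the body of the proof is one or two sentences: ``Since $\Gamma$ is $t$-walk-regular, the minimal idempotent $E_\theta$ satisfies $A_j\circ E_\theta=\alpha_j A_j$ for $0\le j\le t$ and $AE_\theta=\theta E_\theta$, hence is a $t$-walk-regular idempotent for $\theta$ with rank $m(\theta)\le t$. The claim now follows from Theorem \ref{thm: m-walk multiplicity m implies DRG}.'' In fact this reduction is already flagged in the excerpt in the remark preceding the corollary (``if $\G$ has an eigenvalue $\theta\neq\pm k$ with multiplicity at most $t$, then it has a corresponding $t$-walk-regular idempotent for $\theta$ with rank at most $t$''), so the corollary is essentially a restatement.

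There is no genuine obstacle here; the only thing to be careful about is the bookkeeping of hypotheses. One must check that the condition $\theta\neq\pm k$ is carried over verbatim (it is needed in Proposition \ref{prop: m-walk-regular mult m b_m=1}, hence in Theorem \ref{thm: m-walk multiplicity m implies DRG}, to rule out the degenerate cases where Lemma \ref{lem: 2-walk x=y} fails), and that $k\ge 3$ is automatic or handled: if $k\le 2$ then every connected $t$-walk-regular graph is a cycle or a path-like graph and is distance-regular anyway, but in fact a connected regular graph of valency $k\le 2$ is a cycle, which is trivially distance-regular, so the statement holds vacuously in that range. The only mildly nontrivial point worth a sentence is why multiplicity $m$ of $\theta$ forces a minimal idempotent of rank exactly $m$ that is simultaneously $t$-walk-regular — but that is exactly the content of the Preliminaries' discussion of minimal idempotents of $t$-walk-regular graphs, so it can be cited rather than reproved.

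I would therefore write the proof as a short two-line deduction, with the understanding that all the real work has been done in Propositions \ref{prop: m-walk-regular mult m b_m=1} and \ref{prop: t-wr bt=1 DRG} and assembled in Theorem \ref{thm: m-walk multiplicity m implies DRG}. If a referee wanted more detail I would expand the first sentence to recall explicitly that a minimal idempotent $E_\theta=UU^\top$ of a $t$-walk-regular graph has $\operatorname{rk}(E_\theta)=m(\theta)$ and that the $t$-walk-regularity of $\Gamma$ is by definition equivalent to $A_j\circ E_i=\alpha_j(\theta_i)A_j$ for all $i$ and all $j\le t$; but in the present context a one-line proof citing the earlier theorem is entirely appropriate, and I expect the authors' proof to be of exactly this form.
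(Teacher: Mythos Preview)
Your proposal is correct and matches the paper's approach exactly: the corollary is stated in the paper with no separate proof, only the preceding remark that the minimal idempotent $E_\theta$ is a $t$-walk-regular idempotent for $\theta$ of rank $m(\theta)\le t$, after which Theorem~\ref{thm: m-walk multiplicity m implies DRG} applies directly. Your additional remarks on the degenerate cases $k\le 2$ are fine but unnecessary, since those cases are already absorbed into Theorem~\ref{thm: m-walk multiplicity m implies DRG} (if $D\le t$ the graph is $D$-walk-regular hence distance-regular, and if $k\le 2$ it is a cycle).
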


Note that we can extend this result with $t=1$, as we will show next that $1$-walk-regular graphs with an eigenvalue
$\theta\neq \pm k$ with multiplicity $1$ do not exist.

\subsection{$1$-Walk-regular graphs with a small multiplicity}\label{subsec: 1-walk-reg small mult}
Let $\Gamma$ be a 1-walk-regular graph, and suppose that it has an eigenvalue $\theta$ with multiplicity $1$. Let $x$
and $y$ be two adjacent vertices. Since the minimal idempotent $E_{\theta}$ has rank 1, by considering the determinant
of the $2 \times 2$ principal submatrix of $E_{\theta}$ on $x$ and $y$, it follows that $\alpha_1=\pm \alpha_0$, and
hence by \eqref{eq: relation alpha0 alpha1} we obtain that $\theta=\pm k$. In other words, a $1$-walk-regular graph has
no eigenvalues different from $\pm k$ with multiplicity $1$. In the following proposition we consider $1$-walk-regular
graphs with an eigenvalue with multiplicity $2$.

\begin{proposition}\label{prop: 1-walk reg mult 2}
Let $\Gamma$ be a $1$-walk-regular graph with a $1$-walk-regular idempotent for an eigenvalue with
rank $2$. Then $\Gamma$ is a cover of a cycle.
\end{proposition}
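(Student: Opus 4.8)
The idea is that a rank-$2$ walk-regular idempotent $E$ gives a spherical representation $x\mapsto\hat x$ of $\Gamma$ into $\R^2$ (after rescaling, onto a circle), and I want to show this representation is injective on "almost antipodal" classes in a way that realizes $\Gamma$ as a cover of a cycle. First I would write $E=UU^\top$ with $U$ an $|V|\times 2$ matrix of orthonormal columns, so that $\|\hat x\|^2=\alpha_0$ for every vertex $x$; all the $\hat x$ lie on a circle of radius $\sqrt{\alpha_0}$ in $\R^2$. By \eqref{eq: representation lambda}, $\theta\hat x=\sum_{y\sim x}\hat y$, and by $1$-walk-regularity the inner product $\langle\hat x,\hat y\rangle=\alpha_1$ for every edge $xy$; since $E$ has rank $2$ and $\theta\neq\pm k$ (the excluded cases $\theta=\pm k$ force rank $1$, or rather would make the representation degenerate — here I should note $\theta=\pm k$ is impossible since then $E$ would have rank $\le1$ on an edge as in the preceding paragraph), we have $\alpha_1\neq\pm\alpha_0$, i.e. neighbours are neither equal nor antipodal on the circle. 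Write the points of the representation in terms of angles: $\hat x=\sqrt{\alpha_0}(\cos\phi_x,\sin\phi_x)$. The edge condition says $\cos(\phi_x-\phi_y)=\alpha_1/\alpha_0=:u_1$ for every edge, so each neighbour of $x$ is at angular distance $\pm\arccos(u_1)$ from $x$; call this angle $\beta\in(0,\pi)$.

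Next I would show the fibres of $x\mapsto\phi_x\bmod$ (something) are independent sets and form an equitable partition. The key structural step: consider the subgroup $H$ of the circle generated by $\beta$. If $\beta$ is an irrational multiple of $2\pi$, the relation $\theta\hat x=\sum_{y\sim x}\hat y$ together with $\sum_{y\sim x}\hat y$ lying at angle $\phi_x$ (it must be a positive multiple of $\hat x$, as $\theta$ is real and the sum of $k$ unit-ish vectors at angles $\phi_x\pm\beta$ points along $\phi_x$) forces the multiset of signs among the $k$ neighbours of $x$ to be balanced: exactly $k/2$ neighbours at $\phi_x+\beta$ and $k/2$ at $\phi_x-\beta$, giving $\theta=k\cos\beta$ — but then every vertex has even valency and the set of attained angles is $\{\phi_0+n\beta: n\in\Z\}$, which is infinite, contradicting finiteness. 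Hence $\beta=2\pi p/q$ for coprime integers $p,q$ with $q\ge3$ (the cases $q\le2$ are ruled out since $\beta\in(0,\pi)$ and $\beta\neq 0$ would only allow... actually $q=1,2$ give $\beta\in\{0,\pi\}$, excluded). So all $\phi_x$ lie in a single coset of $\frac{2\pi}{q}\Z$: indeed the graph is connected, so starting from one vertex every angle is reached by adding multiples of $\beta$, hence of $\frac{2\pi}{q}$. Thus the representation lands in exactly $q$ points of the circle, forming a regular $q$-gon.

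Now define $\overline\Gamma=C_q$ on these $q$ angular positions $0,\frac{2\pi}{q},\dots,\frac{2\pi(q-1)}{q}$, with $i\sim j$ iff they differ by $\pm\beta=\pm\frac{2\pi p}{q}$ — since $\gcd(p,q)=1$ this is a $q$-cycle. The partition of $V(\Gamma)$ into fibres $V_i=\{x:\phi_x=2\pi i/q\}$ has: (i) each $V_i$ independent, since an edge forces angular distance $\beta\neq 0$; (ii) the partition is equitable — by $1$-walk-regularity, for $x\in V_i$ the numbers $|\Gamma(x)\cap V_{i+p}|$ and $|\Gamma(x)\cap V_{i-p}|$ are constant (they sum to $k$, and by a Perron-type/counting argument using that $\Gamma$ is walk-regular and the quotient matrix must have row sums $k$ and be consistent with the eigenvalue $\theta=k\cos\beta$, both equal $k/2$ — or more simply, the fibre sizes are all equal to $|V|/q$ because $U^\top U=I$ distributes the representation uniformly, and double counting edges between $V_i$ and $V_{i\pm p}$ gives constancy); (iii) the quotient graph is $C_q$ by construction. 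Hence $\Gamma$ is a cover of the cycle $C_q$.

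\medskip
The main obstacle I expect is the step that pins $\beta$ down to a rational multiple of $2\pi$ and simultaneously shows the neighbour-signs are balanced — i.e. ruling out irrational $\beta$ and odd-looking valency distributions. This rests on the fact that $\sum_{y\sim x}\hat y=\theta\hat x$ is a real multiple of $\hat x$, which in $\R^2$ is a genuine constraint (the imaginary parts must cancel), combined with finiteness of $V$. One must also be a little careful that $U$ might not consist of $A$-eigenvectors a priori; but Proposition-level remarks earlier in the paper (the paragraph deriving \eqref{eq: representation lambda}) need $t\ge 1$ and $AE=\theta E$, so I should take $E$ to be a $1$-walk-regular idempotent \emph{for} an eigenvalue $\theta$, which is exactly the hypothesis, so \eqref{eq: representation lambda} is available. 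A secondary technical point is verifying the fibres all have equal size and the partition is equitable with the claimed quotient; this is routine from walk-regularity plus the rigidity of the $\R^2$ picture, but needs to be written carefully.
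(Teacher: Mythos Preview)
Your approach is correct and genuinely different from the paper's. The paper never passes to angles: it instead computes the possible cosines at distance $2$ by forcing the $3\times 3$ principal minor of $E$ on a path $x\sim y\sim z$ to vanish (getting $u_{xz}\in\{1,\,2u_1^2-1\}$), forms the quotient by identifying vertices with the same image, and then splits into the cases $a_1=0$ and $a_1>0$. In the first case it reads off the equitable quotient numbers from the $(y,x)$-entry of $AE=\theta E$; in the second it uses a triangle to force $u_1=-1/2$ and obtains a $C_3$ quotient. Your argument replaces all of this with the single observation that, writing $\hat y=\sqrt{\alpha_0}\,e^{i\phi_y}$, the identity $\theta=s_+e^{i\beta}+s_-e^{-i\beta}$ has zero imaginary part, so $s_+=s_-=k/2$ at every vertex; closure of the attained angle set under $\pm\beta$ plus finiteness then forces $\beta/2\pi\in\mathbb Q$. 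This is cleaner and avoids the case split on $a_1$ entirely; the paper's route, on the other hand, makes the distance-$2$ structure (and the special role of triangles) more transparent.

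Two small repairs to your sketch. First, the parenthetical claim that ``$U^\top U=I$ distributes the representation uniformly'' and hence all fibres have size $|V|/q$ is not valid: for $q=4$, say, the orthonormality conditions only give $n_0+n_2=n_1+n_3$, not equality of all $n_j$. You do not need this, since the balance $s_+=s_-=k/2$ (which holds for \emph{every} $\beta\in(0,\pi)$, not only irrational ones) already gives equitability directly, and double-counting the edges between consecutive fibres then forces $n_j=n_{j+p}$, hence all fibre sizes equal. Second, tidy the logic: derive $s_+=s_-$ once, up front, from $\sin\beta\neq 0$; then use it both to rule out irrational $\beta$ (infinitely many attained angles) and, in the rational case, to conclude equitability without a separate ``Perron-type'' argument.
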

\begin{proof}
Let $E$ be a $1$-walk-regular idempotent for an eigenvalue $\theta$ with rank $2$, then $\theta$
has multiplicity at least $2$, so $\theta \neq \pm k$. Consider the representation associated to $\theta$, and notice
that the images $\hat{x}$ under this representation are in $\R^2$, i.e., the representation is in the plane.

We may assume that $\Gamma$ is not a complete graph, because it is straightforward to derive that
the only complete graph with a $1$-walk-regular idempotent for an eigenvalue with rank 2 is the $3$-cycle. Consider
two vertices $x$ and $z$ at distance $2$ and let $y$ be a common neighbor of them. Because the determinant of the
principal submatrix of $\frac1{\alpha_0}E$ on $x,y,z$ equals $0$, and the fact that $u_{xy}=u_{yz}=u_1$, it follows
that the cosine $u_{xz}$ between two vertices at distance $2$ equals $1$ or $2u_1^2-1$ (note that the latter is the
cosine of twice the angle with cosine $u_1$).

Consider the quotient graph $\overline{\G}$ obtained by identifying vertices that have the same image under the
representation (that is, $x$ and $x'$ are identified if and only if $\hat{x}=\hat{x}'$). Let $\overline{x}$ be a class
of vertices, with $x\in\overline{x}$. Assume that the class of vertices $\overline{y}$, with $y\in \overline{y}$, is
adjacent to $\overline{x}$ in $\overline{\G}$. Then $\langle \hat{x},\hat{y}\rangle= u_1$. In the plane, this is only
possible for two different vectors $\hat{y}$, so $\overline{x}$ will be adjacent to at most two other classes.

Consider two adjacent vertices $x$ and $y$ in $\G$. Let $s$ denote the number of neighbors $z$ of $y$ that are at
distance $2$ from $x$ with $\langle x,z\rangle= 2u_1^2-1$. Then by looking at the $(y,x)$-entry of
$\frac{1}{\alpha_0}AE=\frac{1}{\alpha_0}\theta E$, we find that
$$
1+a_1u_1+s(2u_1^2-1)+(k-1-a_1-s)1=\theta u_1,
$$
so $s=\frac{k}{2}\left(1-a_1/(\theta+k)\right)$. If $a_1=0$, then $s=k/2$, and $\overline{y}$, and hence all the
vertices in $\overline{\G}$, have degree $2$, so $\overline{\G}$ is a cycle. Moreover, the partition given by the
classes of vertices is equitable (with every vertex being adjacent to $k/2$ vertices in each neighboring class).

Assume finally that $a_1> 0$. By considering the principal submatrix of $\frac{1}{\alpha_0}E$ on the three vertices of
a triangle, we find that $u_1=-1/2$ (because $u_1 \neq 1$), and hence that $2u_1^2-1=-1/2$ and $\theta=-k/2$. Let
$x\sim y$, then the common neighbors of $x$ and $y$, and the vertices $z\in \G_2(x) \cap \G(y)$ such that
$\langle\hat{x},\hat{z}\rangle=2u_1^2-1=-1/2$ are in the same class in the quotient graph. Hence, the quotient graph
$\overline{\G}$ is a triangle, and again the partition is equitable.
\end{proof}

Every coclique extension of a cycle is 1-walk-regular (see Section \ref{sec: constructions}), and it has eigenvalues
with multiplicity 2, except for coclique extensions of the $4$-cycle (which are complete bipartite graphs). But this
certainly does not cover all the possibilities.

Indeed, let $\G$ be any 1-walk-regular graph (for example, a strongly regular graph) and let $\G'$ be any cycle,
except the $4$-cycle. Then by applying Proposition~\ref{prop: kron} one obtains a 1-walk-regular graph, which typically
has an eigenvalue with multiplicity 2. Indeed, if $k$ is the valency of $\G$ and $\theta \neq 0$ is an eigenvalue of
$\G'$ with multiplicity 2, then the product $k\theta$ is a good candidate eigenvalue with multiplicity 2 of $\G \otimes
\G'$; sometimes however this eigenvalue coincides with other (product) eigenvalues. The latter clearly happens when
$\G'$ is the $4$-cycle, because its only eigenvalue with multiplicity 2 is $\theta=0$.

To show that not all $1$-walk-regular graphs with an eigenvalue with multiplicity 2 come from the above product
construction, we next present examples that have eigenvalue 0 with multiplicity 2, and the $4$-cycle as a quotient.

Consider a connected regular graph $\G$ with $n$ vertices and adjacency matrix $A$, minimal idempotents $E_0=\frac1n J,
E_1,\dots,E_d$, and  spectrum $k=\theta_0^1,\theta_1^{m_1},\dots,\theta_d^{m_d}$, where the superscripts stand for the
multiplicities. Let $\overline{A}$ be the adjacency matrix of the complement of $\G$. Then the graph $\G'$ with
adjacency matrix
$$N=\begin{bmatrix}A & \overline{A}\\
\overline{A} & A\end{bmatrix}$$ has (not necessarily minimal) idempotents
$$\frac1{2n}J, \frac12\begin{bmatrix}E_0 & -E_0\\ -E_0 &
E_0\end{bmatrix}, \frac12\begin{bmatrix}I-E_0 & I-E_0\\ I-E_0 & I-E_0\end{bmatrix}, F_i=\frac12\begin{bmatrix}E_i & -
E_i\\ - E_i & E_i\end{bmatrix} (i=1,\dots,d),$$ and (corresponding) spectrum
$n-1^1,2k-n+1^1,-1^{n-1},2\theta_1+1^{m_1},\dots,2\theta_d+1^{m_d}$. Clearly, $\G'$ is walk-regular if $\G$ is
walk-regular. If $\G$ is strongly regular with parameters $(n,k,\la,\mu)$, then one can show that $\G'$ has an
eigenvalue ($2k-n+1$) with multiplicity 1 if and only if $n \neq 4k-2\mu-2\la$. We remark that in the exceptional case
where $n=4k-2\mu-2\la$, the graph $\G$ is in the switching class of a regular two-graph, and $\G'$ is the corresponding
distance-regular Taylor graph (cf.~\cite[Thm.~1.5.6]{bcn89}). Note that this shows that there are infinitely many
walk-regular graphs with an eigenvalue (whose absolute value is smaller than the valency) with
multiplicity 1. Several such infinite families (but with only four distinct eigenvalues) were already constructed by
Van Dam \cite{vdam4}, who also studied the structure of such graphs.

Now consider the bipartite double $\widetilde{\G}'$ of $\G'$. If $\G$ is a conference graph, then the above eigenvalue
of $\G'$ with multiplicity 1 equals 0, so $\widetilde{\G}'$ has eigenvalue 0 with multiplicity 2 (its entire spectrum
consists of eigenvalues $\pm (n-1)^1,0^2,\pm 1^{n-1},\pm \sqrt{n}^{n-1}$). Moreover, $\widetilde{\G}'$ is
1-walk-regular (whereas $\G'$ is not; cf.~Proposition \ref{prop: bipartitedouble}), which follows by considering the
minimal idempotents of $\widetilde{\G}'$ (which can be obtained by summing the appropriate idempotents given in the
proof of Proposition \ref{prop: bipartitedouble}); the crucial point is that $N \circ (F_1-F_2)$ is a multiple of $N$,
but we omit details.

Thus, for every $n \geq 3$, there are infinitely many examples of 1-walk-regular graphs with an eigenvalue with
multiplicity 2 being covers of $C_n$.

We end this section by observing that the smallest multiplicity of an eigenvalue different from $k$ in a
$1$-walk-regular graph provides a bound for its clique number.

\begin{proposition}\label{prop:cliquerank}
Let $\Gamma$ be a $1$-walk-regular graph with valency $k$. Let $E$ be a $1$-walk-regular idempotent with rank $m$ for an eigenvalue $\theta\neq k$. Then every clique of $\Gamma$ has at most $m+1$ vertices.
\end{proposition}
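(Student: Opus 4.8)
The plan is to use the representation associated to the $1$-walk-regular idempotent $E$ and to bound the number of linearly independent images of vertices in a clique. Concretely, let $E = UU^\top$ with $U$ having $m$ orthonormal columns, and for each vertex $x$ write $\hat{x}$ for the $x$-th row of $U$, so all the $\hat{x}$ live in $\R^m$ and have squared length $\alpha_0 > 0$. Since $E$ is a $1$-walk-regular idempotent, for any two adjacent vertices $x \sim y$ we have $\langle \hat{x}, \hat{y}\rangle = \alpha_1$, and by \eqref{eq: relation alpha0 alpha1} we know $\alpha_1 = \alpha_0 \theta / k$.

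First I would observe that since $\theta \neq k$, we have $\alpha_1 \neq \alpha_0$, i.e. $u_1 = \alpha_1/\alpha_0 \neq 1$. Now take a clique $C = \{x_0, x_1, \dots, x_{r}\}$ on $r+1$ vertices in $\Gamma$. The Gram matrix of the vectors $\{\hat{x}_0, \dots, \hat{x}_r\}$ is exactly the principal submatrix $E_C$ of $E$ indexed by $C$, and it equals $\alpha_0 I + \alpha_1 (J - I)$, since any two distinct vertices of $C$ are adjacent. This matrix has eigenvalues $\alpha_0 + r\alpha_1$ (once) and $\alpha_0 - \alpha_1$ (with multiplicity $r$). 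Because $\alpha_1 \neq \alpha_0$, the second eigenvalue is nonzero, so $E_C$ is nonsingular; hence it has rank $r+1$. But $E_C$ is a principal submatrix of $E$, so $r + 1 = \rk(E_C) \leq \rk(E) = m$. This already gives $|C| = r+1 \leq m$, which is even a bit stronger than the claimed $m+1$; presumably the paper's statement allows the slack because the argument in the intended proof works slightly differently, or one wants to avoid the implicit claim $\alpha_1 \neq \alpha_0$ in a corner case. I would state it cleanly and note the improvement.

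There is essentially no hard part here: the only thing to be careful about is the step $\alpha_1 \neq \alpha_0$, which follows from \eqref{eq: relation alpha0 alpha1} together with $\theta \neq k$ (so $\alpha_1 = \alpha_0 \theta/k \neq \alpha_0$), and the fact that the Gram matrix of the clique's images is precisely the principal submatrix $E_C$ of $E$ — this is immediate from $E = UU^\top$ and the definition of $\hat{x}$. An alternative phrasing avoiding eigenvalue computations is to note that $\alpha_0 I + \alpha_1(J-I) = (\alpha_0 - \alpha_1) I + \alpha_1 J$ is nonsingular whenever $\alpha_0 \neq \alpha_1$ and $\alpha_0 + r\alpha_1 \neq 0$; but in fact $\det(E_C) = (\alpha_0 - \alpha_1)^r(\alpha_0 + r\alpha_1)$, and if this vanished with $\alpha_0 \neq \alpha_1$ we would need $\alpha_0 + r\alpha_1 = 0$, i.e. the all-ones vector would be in the kernel — and then by positive semidefiniteness one can still extract the rank bound from the remaining $r$-dimensional subspace, giving $\rk(E_C) \geq r$ and hence $r \leq m$, i.e. $|C| \leq m+1$. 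I would present the clean nonsingular case as the main line and mention this as the reason the bound is stated as $m+1$ rather than $m$.
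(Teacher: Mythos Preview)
Your approach is the same as the paper's: look at the principal submatrix $E_C=\alpha_0 I+\alpha_1(J-I)$, use $\alpha_0\neq\alpha_1$ (from \eqref{eq: relation alpha0 alpha1} and $\theta\neq k$), and bound its rank by $m$. However, your main line contains a genuine slip. From ``the second eigenvalue $\alpha_0-\alpha_1$ is nonzero'' you conclude ``so $E_C$ is nonsingular''; that does not follow, since the other eigenvalue $\alpha_0+r\alpha_1$ may vanish. In fact it \emph{does} vanish precisely when $C$ is a Delsarte clique (cf.\ Proposition~\ref{prop: bound w with theta}), and the paper notes immediately after that proposition that the coclique extensions of the triangle attain $|C|=m+1$ with $m=2$. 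So your claimed improvement $|C|\leq m$ is false in general, and the bound $m+1$ is not ``slack'' but sharp.

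What you wrote in your final paragraph is the correct argument and is exactly the paper's proof: since $\alpha_0\neq\alpha_1$, the eigenvalue $\alpha_0-\alpha_1$ of $E_C$ has multiplicity $c-1$, so $\rk(E_C)\geq c-1$, hence $c-1\leq m$. You should promote that to the main line and drop the nonsingularity claim.
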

\begin{proof}
Let $C$ be a clique with size $c$. Then the principal submatrix of $E$ indexed by the vertices of $C$ equals
$\alpha_0 I+\alpha_1 (J-I)$. Therefore, since $\alpha_0\neq \alpha_1$ (see \eqref{eq: relation alpha0 alpha1}),
$E$ has rank at least $c-1$.
\end{proof}

The coclique extensions of the triangle satisfy the bound with equality (with $m=2$), for example.

\subsection{$2$-Walk-regular graphs with a small multiplicity}\label{subsec: 2-walk-reg small mult}
Let $\theta\neq k$ be an eigenvalue of a $2$-walk-regular graph $\Gamma$ with valency $k$. Recall that $\theta$, as
proven in Section~\ref{subsec: 1-walk-reg small mult}, cannot have multiplicity one. If $\theta$ has multiplicity $2$,
then by Corollary \ref{cor: m-walk multiplicity m implies DRG} we know that $\Gamma$ is distance-regular, and the only
distance-regular graphs with an eigenvalue with multiplicity $2$ are the polygons and the regular complete tripartite
graphs (see \cite[Prop.~4.4.8]{bcn89}). In Theorem~\ref{thm: multi 3}, we will discuss the case of multiplicity $3$.
For that we use the following lemma (cf.~\cite[Thm.~1]{T82}), which is interesting on its own.

\begin{lemma}\label{lem: m<k and a_1=0}
Let $\Gamma$ be a $2$-walk-regular graph with valency $k$. If $\G$ has a $2$-walk-regular idempotent for an eigenvalue $\theta \neq \pm k$ with rank less than $k$, then the intersection number $a_1$ is positive.
\end{lemma}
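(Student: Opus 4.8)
The plan is to argue by contradiction, assuming $a_1 = 0$ (so $\Gamma$ is triangle-free) and deriving that the rank $m$ of the $2$-walk-regular idempotent $E$ for $\theta$ must be at least $k$. The natural tool is Proposition~\ref{thm: lacal 2} (the local-structure result): since $a_1 = 0$, the local graph $\Delta$ at any vertex $x$ is empty (it has $k$ vertices and no edges), so its only eigenvalue is $0$ with multiplicity $k$. If $m < k$, that proposition forces $b := -1 - \frac{b_1}{\theta+1}$ to be an eigenvalue of $\Delta$, hence $b = 0$; and since $b_1 = k - a_1 - c_1 = k - 1$ here, we get $-1 - \frac{k-1}{\theta+1} = 0$, i.e.\ $\theta = -(k-1) - 1 + 1 = \dots$ — rather, $\theta + 1 = -(k-1)$, so $\theta = -k$. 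But $\theta \neq \pm k$ by hypothesis, a contradiction. So I would first check carefully that the hypotheses of Proposition~\ref{thm: lacal 2} are met: we need $\Gamma$ not complete multipartite (a triangle-free complete multipartite graph is complete bipartite, which is handled — see below) and $k \geq 3$ (valency $2$ graphs are cycles, trivially distance-regular, and there $a_1 = 0$ but no eigenvalue has rank $< k = 2$ other than via multiplicity, so the statement is vacuous or the few small cases are checked directly).

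The main subtlety is the degenerate cases excluded by Proposition~\ref{thm: lacal 2}, namely complete bipartite graphs $K_{k,k}$ (the triangle-free complete multipartite graphs) and small valency. For $\Gamma = K_{k,k}$ with $k \geq 2$, the eigenvalues are $k, 0, -k$ with $0$ of multiplicity $2k-2$; a $2$-walk-regular idempotent for $\theta = 0$ has rank at most $2k-2$, which can indeed be less than $k$ when $k \geq 3$ — so here I must check the conclusion $a_1 > 0$ directly, but $K_{k,k}$ has $a_1 = 0$, which would be a counterexample! Let me reconsider: in fact $K_{k,k}$ is complete multipartite, hence excluded by the phrase ``valency $k$'' together with the implicit standing exclusions, OR — more likely — the statement is simply understood to exclude complete multipartite graphs as elsewhere in the paper, OR the rank-$<k$ hypothesis fails. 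Indeed a $2$-walk-regular idempotent for $0$ in $K_{k,k}$ must have $E_{\Delta} = \alpha_0 I + \alpha_2 (J - I)$ on the $k$ neighbors of a vertex (all mutually at distance $2$), and computing $\alpha_0, \alpha_1 = 0, \alpha_2$ one finds $E_{\Delta}$ has full rank $k$, forcing $m \geq k$. So the argument actually goes through; I would make this explicit rather than invoking a blanket exclusion.

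So the cleaned-up plan: Let $E = UU^{\top}$ be the given $2$-walk-regular idempotent for $\theta \neq \pm k$, with $\rk(E) = m < k$, and suppose for contradiction $a_1 = 0$. Then $\Gamma$ is triangle-free, so for every vertex $x$ the local graph $\Delta(x)$ is the empty graph on $k$ vertices, and the principal submatrix $E_{\Delta}$ of $E$ indexed by $\Gamma(x)$ equals $\alpha_0 I + \alpha_2(J - I)$, where as in Proposition~\ref{prop: local 1} we have $\alpha_1 = \alpha_0 \theta / k = 0$ and $\alpha_2 = \alpha_0(\theta^2/(b_1 k) - 1/b_1)$ with $b_1 = k-1$ (using $a_1 = c_1 = 0$... wait $c_1 = 1$ always); so $b_1 = k - 1$. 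Now $E_{\Delta}$ is $k \times k$; its eigenvalues are $\alpha_0 + (k-1)\alpha_2$ (on the all-ones vector) and $\alpha_0 - \alpha_2$ (with multiplicity $k-1$). Since $\rk(E_{\Delta}) \leq \rk(E) = m < k$, at least one of these must vanish. If $\alpha_0 - \alpha_2 = 0$: then $\alpha_2 = \alpha_0$, so $\theta^2/(b_1 k) - 1/b_1 = 1$, giving $\theta^2 = k(b_1 + 1) \cdot$ — let me just say $\theta^2 = (k-1)k + k = k^2$, so $\theta = \pm k$, contradiction. Here $\rk(E_\Delta) = 1 < k$ is consistent, but the eigenvalue computation kills it. If instead $\alpha_0 - \alpha_2 \neq 0$, then it is the all-ones eigenvalue that vanishes: $\alpha_0 + (k-1)\alpha_2 = 0$; but note $\alpha_0 + (k-1)\alpha_2 = \theta \alpha_1 + \alpha_0 \cdot(\text{adjust}) $... more directly, one computes from \eqref{eq: relations alpha's} with $j=1$: $c_1 \alpha_0 + a_1 \alpha_1 + b_1 \alpha_2 = \theta \alpha_1 = 0$, i.e.\ $\alpha_0 + (k-1)\alpha_2 = 0$ automatically! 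So this quantity is always $0$, meaning $E_\Delta$ always has rank exactly $k-1$ (since the other eigenvalue $\alpha_0 - \alpha_2 \neq 0$ unless $\theta = \pm k$). Then $\rk(E) \geq \rk(E_\Delta) = k - 1$, so $m \geq k - 1$; combined with $m < k$ this gives $m = k-1$. To finish, I would pass to $\overline{\Delta} = \Gamma$ induced on $x$ together with $\Gamma(x)$: since $\alpha_1 = 0$, the vector $\hat{x}$ is orthogonal to all $\hat{y}$, $y \sim x$, so $E_{\overline{\Delta}}$ has rank $\rk(E_\Delta) + 1 = k$, contradicting $m = k - 1 < k$. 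The main obstacle is bookkeeping the two eigenvalues of $E_\Delta$ and recognizing that the all-ones eigenvalue vanishes identically from \eqref{eq: relations alpha's}; once that is seen the rest is the same ``add the vertex $x$ back'' trick used in Proposition~\ref{thm: lacal 2}.
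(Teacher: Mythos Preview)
Your first paragraph is already the paper's proof, essentially verbatim: assume $a_1=0$, note that the local graph $\Delta$ is empty with only eigenvalue $0$, apply Proposition~\ref{thm: lacal 2} to get $-1-\frac{b_1}{\theta+1}=0$ and hence $\theta=-k$, contradiction. You could stop right there.

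The rest of your proposal introduces unnecessary complications and one genuine error. First, Proposition~\ref{thm: lacal 2} has no hypothesis excluding complete multipartite graphs or requiring $k\geq 3$; you have confused it with Lemma~\ref{lem: 2-walk x=y} or Theorem~\ref{thm: bound k with m}. So the detour through $K_{k,k}$ is not needed. Second, in your ``cleaned-up plan'' you write $\alpha_1=\alpha_0\theta/k=0$, and later conclude from \eqref{eq: relations alpha's} that $\alpha_0+(k-1)\alpha_2=0$ ``automatically''. This is false: $\alpha_1=\alpha_0\theta/k$ vanishes only when $\theta=0$, and in general \eqref{eq: relations alpha's} with $j=1$, $a_1=0$, $c_1=1$, $b_1=k-1$ gives $\alpha_0+(k-1)\alpha_2=\theta\alpha_1=\tfrac{\theta^2}{k}\alpha_0$, which is nonzero for $\theta\neq 0$. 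The correct case split is: if $\theta\neq 0$ then both eigenvalues of $E_\Delta$ are nonzero (the $(k-1)$-fold one $\alpha_0-\alpha_2$ could only vanish if $\theta=\pm k$, as you computed), so $\rk(E_\Delta)=k$ already; only when $\theta=0$ do you actually have $\alpha_1=0$ and $\rk(E_\Delta)=k-1$, and then your ``add $x$ back'' step is valid. So your direct argument is repairable, but as written it is wrong, and in any case it just re-derives what Proposition~\ref{thm: lacal 2} already gives you.
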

\begin{proof}
Assume that $a_1=0$, and so $b_1=k-1$. Let $x$ be a vertex of $\Gamma$ and let $\Delta$ be the subgraph of $\Gamma$
induced on the neighbors of $x$. Note that the local graph $\Delta$ has no edges and hence $\Delta$ has only $0$ as an
eigenvalue. As (the rank) $m<k$, by Proposition~\ref{thm: lacal 2} we know that $-1-\frac{b_1}{\theta+1}$
is an eigenvalue of $\Delta$, which shows that $\theta=-k$. This is a contradiction, so $a_1$ should be positive.
\end{proof}

\begin{theorem}\label{thm: multi 3} Let $\Gamma$ be a $2$-walk-regular graph different from a
complete multipartite graph, with valency $k\geq3$ and eigenvalue $\theta\neq\pm k$ with multiplicity $3$. Then
$\Gamma$ is a cubic graph with $a_1=a_2=0$ or a distance-regular graph. Moreover, if $\Gamma$ is distance-regular, then
$\Gamma$ is the cube, the dodecahedron, or the icosahedron.
\end{theorem}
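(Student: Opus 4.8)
The plan is to use the bound $k \leq \frac{(m+2)(m-1)}{2}$ from Theorem~\ref{thm: bound k with m}, which with $m=3$ forces $k \leq 5$, and then to run through the possible valencies $k \in \{3,4,5\}$ while exploiting the local-structure results of Section~\ref{sec:terwilliger}. Since $m = 3 < k$ whenever $k \geq 4$, Corollary~\ref{thm: lacal' 2} applies: $\theta \in \{\theta_1,\theta_d\}$ and $b := -1 - \frac{b_1}{\theta+1}$ is an eigenvalue of every local graph $\Delta$ with multiplicity at least $k - m + \delta_{b,a_1} = k - 3 + \delta_{b,a_1}$. Moreover, by Lemma~\ref{lem: m<k and a_1=0}, for $k \geq 4$ we have $a_1 > 0$, so every local graph $\Delta$ is a nonempty $a_1$-regular graph on $k \in \{4,5\}$ vertices with a prescribed eigenvalue of high multiplicity. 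The first step is therefore a finite case analysis on $(k,a_1)$ and the possible local graphs $\Delta$: for $k=4$, $\Delta$ is a regular graph on $4$ vertices (a matching cannot occur since then $a_1=1$ but a $1$-regular graph on $4$ vertices has eigenvalues $\pm 1$ each with multiplicity $2$, and we need multiplicity $\geq k-3 = 1$, which is automatic — so one must look harder, e.g.\ at whether $b = -1 - b_1/(\theta+1)$ can actually equal $-1$ or $1$), and for $k=5$, $\Delta$ is a regular graph on $5$ vertices, forcing $a_1 \in \{2,4\}$ (the pentagon $C_5$ or $K_5$).

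The second step is to pin down which of these local configurations are consistent with $2$-walk-regularity and actually yield distance-regular graphs. Here one uses that the local eigenvalue $b$ together with $a_1$ determines a strongly-regular-type constraint on $\Delta$ (as in the fundamental bound theorem), and that the intersection array up to distance $2$ — namely $k, a_1, b_1, c_2, a_2, b_2$ — is then heavily constrained by the requirement that $\theta$ is an eigenvalue with the standard-sequence relations \eqref{eq: relations alpha's} and multiplicity exactly $3$. One expects that the surviving cases with $k=4$ or $k=5$ either collapse to no graph at all or to a known small distance-regular graph; combined with the fact that a $2$-walk-regular graph with an eigenvalue of multiplicity $3$ and small diameter should be distance-regular via Corollary~\ref{cor: m-walk multiplicity m implies DRG} (once one argues the rank of the relevant idempotent or the diameter is small enough), this would force $\Gamma$ distance-regular. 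Then one invokes the classification of distance-regular graphs with an eigenvalue of multiplicity $3$ — by Godsil's multiplicity bound (Theorem~\ref{thm: bound k with m} applied in the distance-regular setting, or the cited \cite{g88}) such graphs have bounded diameter and valency, and the list is precisely $K_{3,3}$-type degenerate cases excluded, leaving the cube, the dodecahedron, and the icosahedron as the non-complete-multipartite examples with $k \geq 3$.

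The remaining case is $k = 3$. Here $m = 3 = k$, so Corollary~\ref{thm: lacal' 2} does not directly apply and a different argument is needed; this is where I would expect the main subtlety. By Proposition~\ref{proporg: cubic 1-walk} a cubic $1$-walk-regular graph is already $2$-walk-regular, so the hypothesis is not very restrictive, and one must separate into $a_1 = 0$ and $a_1 > 0$. If $a_1 > 0$, then (as in the proof of Proposition~\ref{prop: 1-walk reg mult 2}) every edge lies in a triangle and $u_1 = -1/2$, forcing $\theta = -k/2$; for $k = 3$ this gives $\theta = -3/2$, and one checks with the cosine recursions and $a_2$ whether multiplicity $3$ is possible — this should either be impossible or lead again to a small distance-regular graph on the list. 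If $a_1 = 0$, I would further split on $a_2$: if $a_2 > 0$ the graph has odd-girth $5$ and short local structure, while if $a_2 = 0$ the graph is ``locally and second-locally'' triangle-and-pentagon-free, i.e.\ $a_1 = a_2 = 0$, which is exactly the alternative conclusion of the theorem; in that sub-case one does not claim distance-regularity. The hard part will be showing that the cubic, $a_1=0$, $a_2>0$ case either forces distance-regularity (hence the dodecahedron, the unique cubic distance-regular graph with odd-girth $5$ and an eigenvalue of multiplicity $3$) or cannot occur — this likely requires a careful look at $p^2_{22}$ and the representation in $\mathbb{R}^3$, using Lemma~\ref{lem: 2-walk x=y} to rule out coincidences among the images $\hat{x}$.
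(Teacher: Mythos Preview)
Your overall strategy---bound $k\le 5$ via Theorem~\ref{thm: bound k with m}, then run a case analysis using Lemma~\ref{lem: m<k and a_1=0} and Corollary~\ref{thm: lacal' 2}---matches the paper's, and the case $k=5$ is handled essentially the same way (the local graph is the pentagon, so $\Gamma$ is the icosahedron). However, two of your cases contain genuine gaps.

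\textbf{The cubic case is overcomplicated, and one step is wrong.} Your claim that $a_1>0$ forces $u_1=-\tfrac12$ ``as in the proof of Proposition~\ref{prop: 1-walk reg mult 2}'' is invalid: that argument used that the $3\times 3$ principal submatrix on a triangle is singular, which holds only because the idempotent there has rank~$2$. With rank~$3$ no such conclusion follows. The paper's route is far simpler and avoids representations entirely: for $k=3$, if $a_1>0$ then $b_1\le 1$ and hence $b_2\le b_1\le 1$; if $a_2>0$ then $b_2=3-a_2-c_2\le 1$. Either way $b_2\le 1$, and Proposition~\ref{prop: t-wr bt=1 DRG} (or diameter~$2$) gives distance-regularity directly. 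One then reads off the cube and dodecahedron from the known list of cubic distance-regular graphs. No analysis of $p^2_{22}$ or the $\mathbb{R}^3$ representation is needed.

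\textbf{The case $k=4$, $a_1=1$ is not finished.} You observe correctly that the local graph is a perfect matching with spectrum $\{1^2,-1^2\}$ and that Corollary~\ref{thm: lacal' 2} only guarantees multiplicity $\ge k-3=1$, which gives no immediate contradiction. The paper's argument here is substantive and you have not supplied it: from $b=-1-\tfrac{b_1}{\theta+1}\in\{\pm1\}$ and $b_1=2$ one gets $\theta=-2$; then, since every edge lies in a unique triangle, $\Gamma$ is a line graph and the triangle--vertex incidence matrix shows $-2$ has multiplicity at least $n/3$, forcing $n\le 9$. Combining this with $c_2k_2=kb_1=8$ and $c_2\le 3$ (else complete multipartite) leaves only $c_2\in\{1,2\}$, and both are eliminated by counting ($n\ge 13$ in the first case; in the second $\Gamma$ would be $L_2(3)$, which has no eigenvalue of multiplicity~$3$). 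Your sketch ``one expects the surviving cases collapse'' does not substitute for this.
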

\begin{proof}
As $\Gamma$ is $2$-walk-regular and not complete multipartite, we have that $k\leq\frac{(m+2)(m-1)}{2}=5$ by
Theorem~\ref{thm: bound k with m}.

For $k=3$, if at least one of the intersection numbers $a_1$ and $a_2$ is not zero, then $b_2 \leq 1$ and this shows
that the graph $\Gamma$ is distance-regular by Proposition~\ref{prop: t-wr bt=1 DRG}. All cubic distance-regular graphs
are known (see \cite[Thm.~7.5.1]{bcn89}), and it follows that $\Gamma$ is either the cube or the dodecahedron.

For the cases with $k>3$, we first observe that $a_1>0$ by Lemma~\ref{lem: m<k and a_1=0}. Note also that $a_1<k-1$
because $\G$ is not complete.

Assume first that $k=5$. Let $x$ be a vertex of $\Gamma$ and let $\Delta$ be the subgraph of $\Gamma$ induced on the
neighbors of $x$. Then the number of edges in $\Delta$ equals $5a_1/2$, which implies that $a_1$ should be even, and
hence $a_1=2$. Now the local graph $\Delta$ is a pentagon, and hence $\Gamma$ is the icosahedron (see
\cite[Prop.~1.1.4]{bcn89}), which is indeed distance-regular with an eigenvalue with multiplicity $3$ (it has spectrum
$\{3^1,\sqrt{5}^3,1^5,0^4,-2^4,-\sqrt{5}^3\}$).

Next, we assume that $k=4$. In this case $a_1\in\{1,2\}$. If $a_1=2$, then the local graph $\Delta$ is a quadrangle and
hence the graph $\Gamma$ is the octahedron. The octahedron is distance-regular and has spectrum $\{4^1,0^3,-2^2\}$, but
it is a complete multipartite graph $K_{3\times2}$.

To complete the proof, we may assume that $k=4$ and $a_1=1$. As $a_1=1$, the local graph $\Delta$ is a disjoint union
of two edges, i.e., the spectrum of $\Delta$ is $\{1^2,-1^2\}$. By Proposition~\ref{thm: lacal 2} we know that
$-1-\frac{b_1}{\theta+1}$ is an eigenvalue of $\Delta$, so $\theta=-2$ because $b_1=2$. Because every vertex is in
precisely two triangles, and every edge is precisely one triangle, it will be useful to consider the triangle-vertex
$(0,1)$-incidence matrix $N$, where $N_{T,x}=1$ if vertex $x$ is in triangle $T$. If $B$ is the adjacency matrix of
$\G$, then $B=N^{\top}N-2I$ (in fact, $\G$ is a line graph, cf.~Proposition \ref{prop: line graph t-w-r}). If $n$ is
the number of vertices and $c$ the number of triangles, then $3c=2n$. Because the rank of $N$ is at most $c$, it
follows that $\G$ has eigenvalue $-2$ with multiplicity at least $n-c=n/3$, which implies that $n \leq 9$. Consider now
the intersection number $c_2$ and the number of vertices $k_2$ at distance $2$ from a fixed vertex in $\G$. Because
$c_2k_2=kb_1=8$ and $\G$ is not complete multipartite, it follows that $c_2 \in \{1,2\}$. If $c_2=1$, then $n \geq
1+k+k_2 =13$, a contradiction. If $c_2=2$, then $n \geq 1+k+k_2=9$, with equality if and only if $\G$ is strongly
regular with parameters $(9,4,1,2)$. This implies that $\G$ is the lattice graph $L_2(3)$, which however has no
eigenvalue with multiplicity $3$. So, there is no possible graph for $k=4$.
\end{proof}

Contrary to many of the other results, it seems difficult to generalize the condition on the eigenvalue multiplicity in
Theorem \ref{thm: multi 3} to a condition on the rank of a $2$-walk-regular idempotent. The multiplicity condition
gives a bound on $n$, and without this bound the case $k=4,a_1=1,b_2=2$ causes difficulties.

Notice that the complete multipartite graph $K_{(m+1)\times \omega}$ has eigenvalue $-\omega$ with multiplicity $m$,
and hence the complete multipartite graph $K_{4\times \omega}$ has an eigenvalue with multiplicity $3$. The only
complete multipartite graph having eigenvalue $0$ with multiplicity $3$ is the earlier mentioned $K_{3\times2}$.
Examples of other 2-walk-regular graphs (not being distance-regular) with an eigenvalue with multiplicity $3$ can be
found in the Foster census of symmetric cubic graphs \cite{FosterCensus}, such as the graphs $F056A$, $F060A$, $F104A$,
$F112C$, as well as the generalized Petersen graphs $G(8,3)$, $G(12,5)$, and $G(24,5)$, which correspond to graphs
$F016A$ (also known as the M\"{o}bius-Kantor graph; the double cover of the cube without quadrangles), $F024A$, and
$F048A$, respectively. In a forthcoming paper, we will present an infinite family of
$2$-walk-regular graphs with a multiplicity $3$.

\section{The Delsarte bound and geometric graphs}\label{sec: geometric}
In this section we start by observing that the Delsarte bound \cite{D73} for the size of a clique also holds for
$1$-walk-regular graphs. We will in fact prove a somewhat stronger statement and study the cases when equality is
attained. After that, we will focus our attention on the highly related notion of geometric graphs. We will show that
there are finitely many non-geometric $2$-walk-regular graphs with bounded smallest eigenvalue and fixed diameter.

\subsection{The Delsarte bound}\label{sec:delsartebound}

\begin{proposition}\label{prop: bound w with theta}
Let $\G$ be a connected $k$-regular graph with a $1$-walk-regular idempotent $E$ for an eigenvalue $\theta<0$. If $C$ is a clique in $\G$ with characteristic vector
$\chi$, then $|C| \leq 1 - \frac{k}{\theta}$, with equality if and only if $E\chi=0$.
\end{proposition}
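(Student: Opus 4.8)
The plan is to imitate the classical proof of the Delsarte bound using the positive semidefiniteness of the idempotent $E$, now interpreted as a $1$-walk-regular idempotent rather than a minimal idempotent of an association scheme. First I would record the structure of the principal submatrix of $E$ indexed by the vertices of the clique $C$. Since $C$ is a clique, any two distinct vertices of $C$ are adjacent, so by $1$-walk-regularity this submatrix equals $\alpha_0 I + \alpha_1(J-I)$, where $\alpha_0 = E_{xx} > 0$ is the constant diagonal and $\alpha_1$ is the constant value of $E$ on edges. By \eqref{eq: relation alpha0 alpha1} we have $\alpha_1 = \alpha_0 \theta/k$, which is negative since $\theta < 0$.

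The key step is then to evaluate the quadratic form $\chi^{\top} E \chi$, where $\chi$ is the characteristic vector of $C$. Because $E$ is an idempotent it is positive semidefinite, so $\chi^{\top} E \chi \geq 0$. On the other hand, $\chi^{\top} E \chi$ only sees the principal submatrix on $C$, so it equals $\chi^{\top}(\alpha_0 I + \alpha_1(J-I))\chi$ restricted to $C$; with $|C| = c$ this is $\alpha_0 c + \alpha_1 c(c-1) = \alpha_0 c\left(1 + \frac{\theta}{k}(c-1)\right)$. Since $\alpha_0 c > 0$, positive semidefiniteness forces $1 + \frac{\theta}{k}(c-1) \geq 0$, i.e. $c - 1 \leq -k/\theta$ (the inequality direction flips because $\theta < 0$), which is exactly $|C| \leq 1 - k/\theta$.

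For the equality characterization, equality $\chi^{\top} E \chi = 0$ holds precisely when $\chi$ lies in the kernel of $E$ (here I would use that $E$ is positive semidefinite, so $\chi^{\top} E \chi = 0$ iff $E\chi = 0$; concretely, writing $E = UU^{\top}$, $\chi^{\top} E \chi = \|U^{\top}\chi\|^2$, which vanishes iff $U^{\top}\chi = 0$ iff $E\chi = UU^{\top}\chi = 0$). Tracing back through the computation, $\chi^{\top} E \chi = 0$ is equivalent to $1 + \frac{\theta}{k}(c-1) = 0$, i.e. to equality in the bound. Hence $|C| = 1 - k/\theta$ if and only if $E\chi = 0$, as claimed.

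I do not anticipate a serious obstacle here: the only subtlety is making sure the argument uses $1$-walk-regularity correctly (so that $E$ is constant on edges and on the diagonal, which is all that is needed — no higher-distance information is required), and being careful with the sign of $\theta$ when dividing through. The positive-semidefiniteness-to-kernel step is standard but worth stating explicitly in terms of the factorization $E = UU^{\top}$.
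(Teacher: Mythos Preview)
Your proof is correct and follows essentially the same approach as the paper: compute $\chi^{\top}E\chi$ via the principal submatrix $\alpha_0 I + \alpha_1(J-I)$ on the clique, use positive semidefiniteness of $E$ together with $\alpha_1/\alpha_0=\theta/k$ to get the bound, and characterize equality by $\chi^{\top}E\chi=0 \iff E\chi=0$. The only cosmetic difference is that the paper derives the equality case from $\chi^{\top}E\chi=\chi^{\top}E^2\chi=\|E\chi\|^2$ using idempotency directly, whereas you use the factorization $E=UU^{\top}$; these are equivalent.
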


\begin{proof}
Let $C$ be a clique of $\Gamma$ of size $c$. As in Proposition \ref{prop:cliquerank}, the principal submatrix of $E$
indexed by the vertices of $C$ equals $\alpha_0I+\alpha_1 (J-I)$. Since $E$ is positive semidefinite, it follows that
$0 \leq \chi^{\top}E\chi = c(\alpha_0+\alpha_1(c-1))$. Now the bound on $c$ follows by using \eqref{eq: relation alpha0
alpha1}: $\alpha_0/\alpha_1=k/\theta$. If equality holds, then $0 = \chi^{\top}E\chi =\chi^{\top}E^2\chi
=\|E\chi\|^2$, so $E\chi=0$ (and the other way around).
\end{proof}

We call a clique with size attaining this bound a {\em Delsarte clique}. Note that if the multiplicity of $\theta$
equals $|C|-1$, that is, the bound of Proposition \ref{prop:cliquerank} is tight, then $C$ is a Delsarte clique.
Clearly, Proposition \ref{prop: bound w with theta} applies to $1$-walk-regular graphs, so that we obtain the following
Delsarte bound.

\begin{theorem}\label{thm: Delsarte 1-walk reg}
Let $\Gamma$ be a $1$-walk-regular graph with valency $k$ and smallest eigenvalue $\theta_d$. Then every
clique of $\Gamma$ has at most $1-\frac{k}{\theta_d}$ vertices.
\end{theorem}

We remark that if the graph is $1$-walk-regular, then equality in Proposition~\ref{prop: bound w with theta} can only
occur for $\theta=\theta_d$. Line graphs of regular graphs with valency at least $3$ constitute a class of graphs for
which the bound is satisfied with equality. However, the minimal idempotent corresponding to its smallest eigenvalue
does not necessarily satisfy the conditions of Proposition \ref{prop: bound w with theta}. On the other hand, the
Cartesian product $K_m \oplus K_n \oplus K_p$ of three complete graphs (a generalized Hamming graph) is
$0$-walk-regular with maximal cliques of size $m, n$, and $p$, while the Delsarte `bound' equals $(m+n+p)/3$, so for
particular values of $m,n$, and $p$, it has maximal cliques of size attaining the Delsarte bound, but also larger
cliques. A final remark is that the same approach works for bounding the maximum number of vertices mutually at
distance $t$ in a $t$-walk-regular graph.

In a distance-regular graph with diameter $D$, a Delsarte clique $C$ has covering radius (that is, the maximum distance
of a vertex to the clique) equal to $D-1$ (note that in every connected graph with diameter $D$, the covering radius of
a clique is either $D-1$ or $D$). Moreover, $C$ is {\em completely regular} in the sense that every vertex at distance
$i$ from $C$ is at distance $i$ from the same number of vertices $\phi_i$ of $C$ (and hence it is at distance $j$ from
the same number of vertices $\phi_{i,j}$ of $C$ for every $j$), for $i=0,\dots,D-1$ . We can generalize this as
follows.

\begin{proposition}\label{prop:coveringradius}
Let $\Gamma$ be a $1$-walk-regular graph with $d+1$ distinct eigenvalues, and let $C$ be a
Delsarte clique. Then the covering radius of $C$ is at most $d-1$. Moreover, if $\G$ is $t$-walk-regular, then every
vertex at distance $i$ from $C$ is at distance $i$ from the same number of vertices $\phi_i$ of $C$, for
$i=0,\dots,t-1$.
\end{proposition}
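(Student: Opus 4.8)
The plan is to use the Delsarte clique condition $E\chi = 0$ (where $E$ is the minimal idempotent for $\theta_d$ and $\chi$ is the characteristic vector of $C$), together with the spectral decomposition $A^\ell = \sum_i \theta_i^\ell E_i$, to control how $A^\ell\chi$ behaves. The key observation is that for any polynomial $p$, the vector $p(A)\chi$ lies in the span of $\{E_i\chi : i \neq d\}$ once we know $E_d\chi = 0$; since there are only $d$ idempotents other than $E_d$ and $E_0\chi$ is a multiple of the all-ones vector, the vectors $\chi, A\chi, \dots, A^{d-1}\chi$ span a space containing all the relevant information. More precisely, I would argue that the vector $A^\ell \chi$, for $\ell = 0, 1, \dots$, lies in a space of dimension at most $d$ (spanned by $E_0\chi, E_1\chi, \dots, E_{d-1}\chi$), so there is a polynomial relation of degree at most $d$ among the $A^\ell\chi$. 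One then wants to translate "$(A^\ell \chi)_x$ for small $\ell$ determines everything" into a statement about distances.

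For the covering radius bound: suppose some vertex $x$ is at distance $r$ from $C$, meaning $(A^\ell \chi)_x = 0$ for $\ell < r$ and $(A^r\chi)_x > 0$. If $r \geq d$, then I would derive a contradiction by noting that the vectors $A^0\chi, A^1\chi, \dots, A^{d-1}\chi$ all have zero $x$-entry but must span (together with the fact that $E_d\chi = 0$) a $d$-dimensional space; more carefully, since $\chi, A\chi, \dots, A^{d-1}\chi$ already suffice to express $A^d\chi$ (as $A$ restricted to the cyclic subspace generated by $\chi$ has at most $d$ distinct eigenvalues — namely a subset of $\{\theta_0,\dots,\theta_{d-1}\}$, because $E_d\chi=0$), we get $A^d\chi \in \langle \chi, A\chi, \dots, A^{d-1}\chi\rangle$, and iterating, $(A^\ell\chi)_x = 0$ for all $\ell$, forcing the entry of $(\sum_{\ell} A^\ell/\text{something})\chi$ — i.e. the entry of a vector with strictly positive entries (reachability) — to vanish, contradicting connectedness. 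So the covering radius is at most $d-1$.

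For the completely-regular-type statement: assume $\G$ is $t$-walk-regular and let $x$ be a vertex at distance $i \leq t-1$ from $C$. The number of vertices of $C$ at distance $i$ from $x$ is $|\Gamma_i(x) \cap C|$; I would compute this by using that for $y \in C$ with $\dist(x,y) = i$, a weighted count over $C$ of $A_i$-entries together with the walk-regularity entries $\alpha_j$ of the idempotents can be inverted. The clean way: since $E\chi = 0$, we have $\sum_{j} \alpha_j \, |\Gamma_j(x) \cap C| = (E\chi)_x \cdot (\text{normalization})= 0$ adjusted by the $\hat x$-contribution, but more usefully, $(E_i \chi)_x$ depends only on the distances from $x$ to $C$; and since $\chi$ is a combination of $E_0\chi, \dots, E_{d-1}\chi$ and each $(E_\ell\chi)_x$ is expressible via the cosines $\alpha^{(\ell)}_j$ (which are constants for $j \leq t$) times the unknown counts $\phi_{i,j} := |\Gamma_j(x)\cap C|$, we get a linear system; the point is that if $x$ is at distance exactly $i \leq t-1$ from $C$ then $\phi_{i,j} = 0$ for $j < i$, and $E_d\chi = 0$ gives one extra equation. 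I would set up the $(d+1)\times$ something Vandermonde-type system in the $\phi_{i,j}$'s and show $\phi_i := \phi_{i,i}$ is forced.

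The main obstacle I anticipate is the second part: making the combinatorial bookkeeping of $\phi_{i,j} = |\Gamma_j(x)\cap C|$ precise and showing the relevant linear system (whose coefficients are the cosines $\alpha_j^{(\ell)}$ for $\ell = 0, \dots, d$, valid only for $j \leq t$) has a unique solution for $\phi_i$ when $i \leq t-1$. The truncation "$j \leq t$" interacts delicately with "distance to $C$ is $i \leq t-1$", and one must check that no information about $\phi_{i,j}$ with $j > t$ leaks in — which should follow because a vertex at distance $i \leq t-1$ from $C$ is at distance at most $t$ from every vertex of $C$ reachable through a shortest path, but the careful argument is via considering $A^\ell \chi$ for $\ell \leq t$ and reading the $x$-entry, where every term $A_j \circ E_i$ that appears has $j \leq \ell \leq t$ so the constants $\alpha_j^{(i)}$ are all well-defined. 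The covering radius part is comparatively routine once the spanning/cyclic-subspace argument is in place.
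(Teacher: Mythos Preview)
Your argument for the covering radius bound is correct and matches the paper's: since $E_d\chi=0$, the cyclic $\langle A\rangle$-module generated by $\chi$ has dimension at most $d$, so $A^d\chi \in \langle \chi, A\chi,\dots,A^{d-1}\chi\rangle$; a vertex $x$ with $(A^\ell\chi)_x=0$ for all $\ell<d$ would then have $(A^\ell\chi)_x=0$ for all $\ell$, contradicting connectedness.

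For the second part, however, you are missing the one observation that makes the problem trivial, and without it your Vandermonde plan does not close. The point is that $C$ is a \emph{clique}: if $\dist(x,C)=i$ and $y\in C$ realises this distance, then every other $z\in C$ is adjacent to $y$, so $\dist(x,z)\in\{i,i+1\}$. Thus there are only \emph{two} distance classes in $C$ seen from $x$, of sizes $\phi_i$ and $c-\phi_i$, and a single linear equation suffices. The paper simply reads the $x$-entry of $E_d\chi=0$ (valid since $i+1\le t$, so both $\alpha_i$ and $\alpha_{i+1}$ are defined) to obtain
\[
\phi_i\,\alpha_i+(c-\phi_i)\,\alpha_{i+1}=0,
\]
which determines $\phi_i$ once one checks $\alpha_i\neq\alpha_{i+1}$. (If $\alpha_i=\alpha_{i+1}$ then this common value is $0$, and the recursion $c_j\alpha_{j-1}+a_j\alpha_j+b_j\alpha_{j+1}=\theta\alpha_j$ forces $\alpha_j=0$ for all $j\le i+1$, contradicting $\alpha_0>0$.)

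Without the clique observation you only know that the distances from $x$ to $C$ lie in $\{i,i+1,\dots\}$ (your phrase ``at most $t$'' is the right bound but not the sharp one), giving several unknowns $\phi_{i,j}$. But you have only two equations whose right-hand side is known independently of $x$: $(E_0\chi)_x=c/n$ and $(E_d\chi)_x=0$. For the other idempotents, $(E_\ell\chi)_x$ is itself an unknown quantity depending on $x$, so no genuine linear system in the $\phi_{i,j}$ emerges. The alternative via $(A^\ell\chi)_x$ has the same defect: those walk-counts are not known in advance either. So the ``main obstacle'' you anticipated is real, and the clique property is exactly what dissolves it.
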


\begin{proof} Let $E$ be the minimal idempotent for the smallest eigenvalue $\theta_d$. By Proposition \ref{prop: bound w with theta}, we have that $E\chi=0$. Consider the adjacency algebra
$\AL=\langle I,A,A^2,\dots,A^d \rangle$ of $\G$. Because $E$ is a nonzero matrix in $\AL$, it follows that $\AL \chi$
has dimension at most $d$. This implies that if $(A^d\chi)_x \neq 0$, then $(A^i\chi)_x \neq 0$ for some $i<d$, hence
every vertex is at distance at most $d-1$ from $C$.

Now let $c$ be the size of $C$ and assume that $\G$ is $t$-walk-regular. Let $x$ be a vertex at distance $i$ from $C$,
for $i<t$. Let $\phi_i$ be the number of vertices of $C$ that are at distance $i$ from $x$ (we intend to show that this
number does not depend on $x$), then the number of vertices of $C$ that are at distance $i+1$ from $x$ equals
$c-\phi_i$. Because $(E\chi)_x=0$, it follows that $\phi_i\alpha_i+(c-\phi_i)\alpha_{i+1}=0$. Indeed, this shows that
$\phi_i$ does not depend on $x$, as long as $\alpha_i \neq \alpha_{i+1}$. Suppose however that $\alpha_i =
\alpha_{i+1}$. Then it follows that $\alpha_{i+1}=0$, and by repeatedly using \eqref{eq: relations alpha's}, it follows
that $\alpha_j=0$ for all $j \leq i+1$, in particular $\alpha_0=0$, a contradiction.
\end{proof}

In the above we used the original definition of completely regular codes by Delsarte \cite{D73}. Neumaier's
\cite{Neumaier} alternative definition (which is equivalent for codes in distance-regular graphs) is in terms of the
distance partition with respect to the code being equitable, and also this property can be adjusted to Delsarte cliques
in $t$-walk-regular graphs in a straightforward manner; cf.~Neumaier \cite[Thm.~4.1]{Neumaier}. Note however that for
arbitrary codes (not just cliques), the two concepts of `partial complete regularity' are not equivalent. `Partially
Delsarte completely regular' seems to be stronger than `partially Neumaier completely regular' (at least in
distance-regular graphs), in the same way as $t$-walk-regularity is stronger than $t$-partial distance-regularity. An
example showing this is the code consisting of two vertices at distance $n-1$ in the $n$-cube.

\subsection{Geometric graphs}\label{secsub:geometric}

A graph is {\em geometric} if there exists a set of Delsarte cliques such that every edge lies on exactly one of them.
The notion of geometric graph in this sense was introduced by Godsil \cite{G93} for distance-regular graphs. Examples
of geometric graphs are bipartite graphs (trivially) and line graphs of a regular graphs with valency at least 3.

Koolen and Bang \cite{KB10} proved that there are only finitely many non-geometric distance-regular graphs with
smallest eigenvalue at least $-\omega$ and diameter at least $3$. It is also possible to state a similar result for
$2$-walk-regular graphs. More precisely, Koolen and Bang \cite[Thm.~3.3]{KB10} showed that there are finitely many
distance-regular graphs with smallest eigenvalue $-\omega$, diameter $D\geq 3$, and small $c_2$ (compared with $a_1$).
In order to prove this, they bound the valency $k$ using Godsil's multiplicity bound (the analogue of Theorem \ref{thm:
bound k with m}), using the multiplicity of the second largest eigenvalue $\theta_1$. In turn, a bound on $m(\theta_1)$
is derived from the analogue of Proposition~\ref{thm: lacal 2}, after showing that $m(\theta_1)<k$. One of the key
points for the latter inequality is to give an upper bound for the number of vertices in $\Gamma$. Their argument,
however, does not apply to $2$-walk-regular graphs. The following lemma intents to solve this problem.

\begin{lemma}\label{lem:boundv}
Let $\omega\geq 2$ be an integer. Let $\Gamma$ be a $2$-walk-regular graph with valency $k$, diameter $D$, and smallest
eigenvalue at least $-\omega$. If $\epsilon$ is such that $0<\epsilon<1$ and $c_2\geq a_1\epsilon$, then
$|V|<\left(\frac{2\omega^2}{\epsilon}\right)^D Dk.$
\end{lemma}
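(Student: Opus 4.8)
The plan is to bound $|V|$ by controlling how fast the numbers $k_i = |\Gamma_i(x)|$ can grow as $i$ increases, using the smallest-eigenvalue hypothesis together with $2$-walk-regularity. The starting point is the crude estimate $|V| = \sum_{i=0}^{D} k_i \le D \max_i k_i$, so it suffices to show $k_i < \left(\frac{2\omega^2}{\epsilon}\right)^D k$ for every $i$. Since $k_0 = 1$ and $k_1 = k$, I would prove a recursive inequality of the form $k_{i+1} \le \frac{2\omega^2}{\epsilon}\, k_i$ for all $i \le D-1$, from which the claim follows by induction.

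To get the recursion, I would first recall the standard identity $k_i b_i = k_{i+1} c_{i+1}$, which in a $2$-walk-regular graph makes sense at least for small indices; for the growth estimate one uses $k_{i+1} = k_i b_i / c_{i+1}$, so I need an upper bound on $b_i$ and a lower bound on $c_{i+1}$. For the upper bound on $b_i$, the key fact is that a Delsarte-type or interlacing argument via the smallest eigenvalue $-\omega$ bounds the local behaviour: more precisely, for a vertex $y \in \Gamma_i(x)$ and its neighbourhood, the smallest eigenvalue being at least $-\omega$ forces $b_i \le $ (something like) $(\omega-1)(c_{i+1}+\dots)$, or more simply one uses that $b_i \le k$ trivially and controls the ratio instead. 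The cleaner route: since $\theta_d \ge -\omega$, a clique bound / Hoffman-type argument shows that $\Gamma$ has no independent set structure forcing $b_i$ too large relative to $c_2$; concretely I would show $b_i \le \omega\, c_{i+1}$ would already give the recursion with constant $\omega$, but that inequality may fail, so instead I expect to need a two-step argument involving $a_1$.

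The role of the hypothesis $c_2 \ge a_1 \epsilon$ is to handle the denominator $c_{i+1}$: for $i \ge 1$ one has $c_{i+1} \ge c_2$, and $c_2 \ge a_1 \epsilon$ converts the bound into one involving $a_1$. Combined with a bound of the shape $b_i \le$ (const)$\cdot \omega \cdot \max(a_1,1)$ coming from the smallest-eigenvalue condition applied to the local graph of a vertex in $\Gamma_i(x)$ (here is where Proposition \ref{prop: local 1} and the local-graph analysis of Section \ref{sec:terwilliger} enter, giving $a_1 + 1 \le$ something times $\omega$ relative to $b_1$, hence relating $a_1$, $b_1$, and $\omega$), one should land at $k_{i+1}/k_i = b_i/c_{i+1} \le \frac{2\omega^2}{\epsilon}$. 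I would also separately check the first step $k_1/k_0 = k$, which must itself be $< \frac{2\omega^2}{\epsilon}$; this follows from $k = a_1 + b_1 + 1$ together with the bounds $b_1 \le \omega\, c_2$ (interlacing with $\theta_d \ge -\omega$ on the subgraph induced by $x$, a neighbour, and its non-neighbours at distance $2$) and $c_2 \ge a_1\epsilon$, giving $k \le a_1 + \omega c_2 + 1 \le c_2/\epsilon + \omega c_2 + 1 \le \frac{2\omega^2}{\epsilon} c_2 \le \frac{2\omega^2}{\epsilon}\cdot k$ — wait, that is circular, so more carefully $k \le (1/\epsilon + \omega)c_2 + 1$ and $c_2 \le k$, which gives $k \le \frac{2\omega^2}{\epsilon}$ only after dividing, i.e. one must instead bound $c_2 \le \omega$ or similar; the honest statement is $k_1 = k \le \frac{2\omega^2}{\epsilon}$ needs $c_2$ controlled, and indeed $c_2 b_1 = k b_1 \ge$ ... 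I would sort this boundary case out directly.

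\textbf{Main obstacle.} The hard part will be producing the uniform bound $b_i/c_{i+1} \le \frac{2\omega^2}{\epsilon}$ for \emph{all} $i$, not just $i=1$, since for $i \ge 2$ the graph is only assumed $2$-walk-regular and the intersection numbers $b_i, c_{i+1}$ are not even well-defined in general — they depend on the chosen vertices. The fix is to argue for individual vertices: fix $x$ and $y \in \Gamma_i(x)$, set $b_i(x,y), c_i(x,y)$ as in the preliminaries, and bound $|\Gamma_{i+1}(x)| = \sum_{y \in \Gamma_i(x)} b_i(x,y) / (\text{average } c_{i+1})$ using that every $z \in \Gamma_{i+1}(x)$ has $c_{i+1}(x,z) \ge c_2$ (a genuine constant by $2$-walk-regularity) neighbours in $\Gamma_i(x)$, while $b_i(x,y) \le b_1$ is false in general but $b_i(x,y) \le k - c_i(x,y) - a_i(x,y)$ and the smallest-eigenvalue bound on the local graph of $y$ limits $a_i(x,y)$ and $b_i(x,y)$ jointly. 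So: $k_{i+1} c_2 \le \sum_{z \in \Gamma_{i+1}(x)} c_{i+1}(x,z) = (\text{edges between } \Gamma_i(x), \Gamma_{i+1}(x)) = \sum_{y\in\Gamma_i(x)} b_i(x,y) \le k_i \cdot \max_y b_i(x,y)$, and then bounding $\max_y b_i(x,y) \le 2\omega^2 a_1 \le 2\omega^2 c_2/\epsilon$ via the local-eigenvalue estimate is the crux. I would expect the precise constant-chasing to be the only real work, with the structural input being entirely Proposition \ref{prop: local 1} (the local graph of any vertex has smallest eigenvalue $\ge -1 - b_1/(\theta_d+1) \ge -1 - b_1/(1-\omega)$, forcing $a_1$ and $k$ into a bounded relationship).
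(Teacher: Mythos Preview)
Your overall strategy --- control $|V|$ by bounding the growth ratio $k_{i+1}/k_i$, using $c_{i+1}(x,z)\ge c_2$ on one side and some upper bound on the $b_i(x,y)$'s on the other via the edge-count identity $\sum_{z\in\Gamma_{i+1}(x)} c_{i+1}(x,z)=\sum_{y\in\Gamma_i(x)} b_i(x,y)$ --- is exactly the paper's. The gap is in the two places you flagged as the hard work.

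\textbf{The bound on $b_i(x,y)$.} You write that ``$b_i(x,y)\le b_1$ is false in general'' and then try to replace it by a local-eigenvalue estimate giving $\max_y b_i(x,y)\le 2\omega^2 a_1$. The first claim is wrong and the second cannot hold (take $a_1=0$). In fact $b_i(x,y)\le b_1$ \emph{does} hold for every $y\in\Gamma_i(x)$ with $i\ge 1$, by the same one-line argument as for $c_{i+1}\ge c_2$: pick $z\in\Gamma(y)\cap\Gamma_{i-1}(x)$ and observe that any $w\in\Gamma(y)\cap\Gamma_{i+1}(x)$ satisfies $\dist(z,w)=2$, so $\Gamma(y)\cap\Gamma_{i+1}(x)\subseteq\Gamma(y)\cap\Gamma_2(z)$, and the right-hand side has size exactly $b_1$ since $\Gamma$ is $2$-walk-regular. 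This is the key step, and it gives $k_{i+1}\le (b_1/c_2)k_i$ directly.

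\textbf{The relation between $k$, $a_1$, and $\omega$.} You propose to extract this from Proposition~\ref{prop: local 1}, but the paper uses something far simpler: an independent set of size $s$ in the local graph $\Delta(x)$ yields an induced $K_{1,s}$ in $\Gamma$, whose smallest eigenvalue is $-\sqrt{s}$; interlacing then forces $s\le\omega^2$, hence $k\le\omega^2(a_1+1)$. From this, $b_1=k-1-a_1\le(\omega^2-1)(a_1+1)$, and combining with $(a_1+1)/c_2\le 1/\epsilon+1<2/\epsilon$ gives $b_1/c_2<2\omega^2/\epsilon$. No local-spectrum analysis is needed, and the base case $k_1=k$ does not require any separate treatment (the sum $\sum_{i=0}^D k_i$ is bounded by $1+\sum_{i=1}^D(2\omega^2/\epsilon)^{i-1}k$).
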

\proof Let $\Delta$ be the subgraph of $\Gamma$ induced on the neighbors of $x$. Observe first that the size of a
coclique in $\Delta$ is at most $\omega^2$, because otherwise by eigenvalue interlacing, $\Gamma$ would have an
eigenvalue smaller than $-\omega$. Because the number of vertices in $\Delta$ is $k$, it follows that $k\leq
\omega^2(a_1+1)$.

Note that the assumptions on $\epsilon$ imply that $\frac{a_1+1}{c_2}<\frac2{\epsilon}$. Since $b_1=k-1-a_1$, this
implies that
\begin{equation*}
\frac{b_1}{c_2} \leq \frac{(\omega^2-1)(a_1+1)}{c_2}< \frac{2\omega^2}{\epsilon}.
\end{equation*}
Fix $x\in V$ and let $k_i=|\Gamma_i(x)|$ ($i=0,\dots,D$). We claim that $k_{i+1}\leq \frac{b_1}{c_2}k_i$ for
$i=1,\dots,D-1$. In order to show this claim, first observe the following. For $y\in \G_{i+1}(x)$, let
$c_{i+1}(y)=|\Gamma(y)\cap \Gamma_{i}(x)|$. By taking $z\in \Gamma_{i-1}(x)$ with $\mbox{dist}(y,z)=2$, and observing
that $\Gamma(y)\cap \Gamma(z) \subset \Gamma(y)\cap \Gamma_{i}(x)$, it follows that $c_2 \leq c_{i+1}(y)$. Similarly,
one can show that $b_i(y) \leq b_1$ for $y \in \G_i(x)$, where $b_i(y)=|\Gamma(y)\cap \Gamma_{i+1}(x)|$. Now
$k_{i+1}c_2 \leq \sum_{y\in \Gamma_{i+1}(x)}c_{i+1}(y)=\sum_{y\in \Gamma_{i}(x)}b_{i}(y) \leq k_ib_1$, which proves the
claim. Thus,
\[\pushQED{\qed}
|V|=\sum_{i=0}^D k_i< 1+\sum_{i=1}^D\left(\frac{2\omega^2}{\epsilon}\right)^{i-1}k<\left(\frac{2\omega^2}{\epsilon}\right)^D Dk.
\qedhere
\popQED\]\\

As a consequence of this lemma, the proof by Koolen and Bang \cite{KB10} also applies to 2-walk-regular graphs, so we
have the following result (cf.~\cite[Thm.~3.3]{KB10}).

\begin{theorem}\label{thm:bangkoolen}
Let $0<\epsilon<1$, and let $\omega\geq 2$ and $D\geq 3$ be integers. Let $\Gamma$ be a $2$-walk-regular graph with
valency $k$, diameter $D$, smallest eigenvalue at least $-\omega$, and with $c_2\geq a_1\epsilon$. Then
$k<D^2\left(\frac{2\omega^2}{\epsilon}\right)^{2D+4}$. In particular, there are finitely many such graphs.
\end{theorem}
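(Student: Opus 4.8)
The plan is to follow the strategy of Koolen and Bang \cite{KB10} for distance-regular graphs, using Lemma~\ref{lem:boundv} as the replacement for the ingredient in their argument that fails for $2$-walk-regular graphs. First I would recall that for a geometric graph, the smallest eigenvalue equals $-\ell$ where $\ell$ is (essentially) the number of Delsarte cliques through a vertex; the point of the theorem is to bound $k$ when the graph is \emph{not} geometric, so I would set up a contradiction or an explicit inequality: assuming the parameters $\omega$, $D$, $\epsilon$ are fixed, the only thing that could be unbounded is $k$ (and the number of vertices), and the goal is to rule this out by bounding $m(\theta_1)$, the multiplicity of the second-largest eigenvalue.

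The key steps, in order, are as follows. First, use Lemma~\ref{lem:boundv} to bound $|V|$ by $\left(\frac{2\omega^2}{\epsilon}\right)^D Dk$; combined with the elementary estimate $k \leq \omega^2(a_1+1)$ from the same lemma (coclique-in-local-graph interlacing), this controls the size of the graph in terms of $k$ and the fixed data. Second, observe that $m(\theta_1) \geq (|V|-1)/(\text{something})$ is the wrong direction; instead one wants $m(\theta_1) < k$ so that Corollary~\ref{thm: lacal' 2} (the local-structure result) applies. The standard way to get $m(\theta_1) < k$ is via a lower bound on $m(\theta_1)$ in terms of $|V|$ together with the observation that if $m(\theta_1)\geq k$ were to force $|V|$ too large relative to the bound just obtained, we get a contradiction — more precisely, one uses that $|V| \leq 1 + k + k(k-1) + \cdots$ is controlled, and that a large multiplicity together with the spectral constraints (the absolute bound, or just $\sum m(\theta_i) = |V|$ with $d$ controlled) pins things down; here is where I would lean directly on the corresponding argument in \cite[Thm.~3.3]{KB10}, which now goes through verbatim because Lemma~\ref{lem:boundv} supplies the missing vertex bound. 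Third, once $m(\theta_1) < k$, apply Corollary~\ref{thm: lacal' 2}: $b := -1 - \frac{b_1}{\theta_1+1}$ is an eigenvalue of each local graph $\Delta$ with multiplicity at least $k - m(\theta_1)$; since $\Delta$ has only $k$ vertices, this is already a strong constraint, and feeding it back bounds $k$ in terms of $m(\theta_1)$. Fourth, apply Theorem~\ref{thm: bound k with m} to get $k \leq \frac{(m+2)(m-1)}{2}$ with $m = m(\theta_1)$, turning a bound on $m(\theta_1)$ into the desired bound on $k$. Chasing the constants through (using $k \leq \omega^2(a_1+1)$, $\frac{b_1}{c_2} < \frac{2\omega^2}{\epsilon}$, and the $|V|$-bound) should yield $k < D^2\left(\frac{2\omega^2}{\epsilon}\right)^{2D+4}$; finiteness then follows since $\omega$, $D$, $\epsilon$, and $k$ being bounded leaves only finitely many feasible parameter sets, and $2$-walk-regular graphs with bounded valency and diameter (hence bounded $|V|$) are finite in number.

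The main obstacle I anticipate is establishing $m(\theta_1) < k$ — this is precisely the step Koolen and Bang handle for distance-regular graphs but which does not transfer directly, and it is the reason Lemma~\ref{lem:boundv} was isolated. The delicate point is that one needs a \emph{lower} bound on $m(\theta_1)$ that is strong enough to clash with an \emph{upper} bound on $|V|$ when $m(\theta_1) \geq k$; in the distance-regular setting this comes from the three-term recurrence for the standard sequence and the sign-change / interlacing properties of the $u_i$, and for $2$-walk-regular graphs only $u_0, u_1, u_2$ are available, so one has to be careful that the portion of the Koolen--Bang argument being invoked really only uses local ($c_2$ versus $a_1$) information plus the vertex bound. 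Once that reduction is in place, everything else is bookkeeping: the application of Corollary~\ref{thm: lacal' 2} and Theorem~\ref{thm: bound k with m} is mechanical, and the final constant-chasing, while tedious, presents no conceptual difficulty — which is why I would simply cite \cite[Thm.~3.3]{KB10} for the detailed estimates and emphasize only that Lemma~\ref{lem:boundv} is the one new input that makes the transfer legitimate.
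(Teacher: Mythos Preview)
Your proposal is essentially correct and matches the paper's approach: the paper's ``proof'' of this theorem consists of a single sentence stating that, once Lemma~\ref{lem:boundv} supplies the vertex bound that was missing in the $2$-walk-regular setting, the argument of Koolen and Bang \cite[Thm.~3.3]{KB10} goes through verbatim. Your final paragraph says exactly this, and your sketch of the Koolen--Bang strategy (bound $|V|$, force $m(\theta_1)<k$, invoke the local-structure result, then Godsil's multiplicity bound) is the right outline. One small cleanup: your opening remark about geometric graphs and ``the point of the theorem is to bound $k$ when the graph is not geometric'' is misplaced --- Theorem~\ref{thm:bangkoolen} makes no geometric/non-geometric distinction; that dichotomy enters only later, in Proposition~\ref{prop:omegageometric} and Theorem~\ref{thm: finitely many non-geometric}.
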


Next is to show, as it happens with distance-regular graphs (see Koolen and Bang \cite[Thm.~5.3]{KB10}), that if $a_1$
is large enough (compared to $c_2$), then a $2$-walk-regular graph with smallest eigenvalue at least $-\omega$ is
geometric. The next result by Metsch \cite{M99} is a key point for that purpose.

\begin{proposition}\cite[Result 2.1]{M99}\label{thm: Metsch}
Let $k\geq 2$, $\mu\geq 1$, $\lambda \geq 0$, and $s\geq 1$. Suppose that $\Gamma$ is a regular graph with valency $k$
such that every two non-adjacent vertices have at most $\mu$ common neighbors, and every two adjacent vertices have
exactly $\lambda$ common neighbors. Define a line as a maximal clique in $\Gamma$ with at least
$\lambda+2-(s-1)(\mu-1)$ vertices. If $\lambda> (2s-1)(\mu-1)-1$ and $k<(s+1)(\lambda+1)-s(s+1)(\mu-1)/2$, then every
vertex is in at most $s$ lines, and each edge lies in a unique line.
\end{proposition}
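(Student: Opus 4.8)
This is Metsch's criterion for recognising the point graph of a partial linear space, and the plan is to follow the Bose-type ``clique-geometry'' argument: attach to every edge a canonical candidate line built from its common neighbourhood, show this candidate is a clique of the prescribed size, and then deploy the two numerical hypotheses separately --- the inequality $\lambda>(2s-1)(\mu-1)-1$ to forbid unwanted non-adjacencies via the bound $\mu$ on common neighbours of non-adjacent pairs, and the valency bound to cap the number of lines at a point.

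The first step is, for a fixed edge $\{x,y\}$, to observe that $\Gamma(x)\cap\Gamma(y)$ has exactly $\lambda$ vertices and to call a common neighbour $z$ \emph{good} if it is non-adjacent to fewer than some fixed number (of order $(s-1)(\mu-1)$) of the remaining vertices of $\Gamma(x)\cap\Gamma(y)$; put $[x,y]:=\{x,y\}\cup\{z:z\text{ good}\}$. The threshold is calibrated so that two good common neighbours can never be non-adjacent: if $z,w$ were, then inside the $\lambda$-set $\Gamma(x)\cap\Gamma(y)$ they would still have many common neighbours by inclusion--exclusion, and together with $x,y$ this would exceed $\mu$ precisely because $\lambda>(2s-1)(\mu-1)-1$. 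Thus $[x,y]$ is a clique. The same threshold is chosen so that at most $(s-1)(\mu-1)$ common neighbours of $\{x,y\}$ are bad, whence $|[x,y]|\geq\lambda+2-(s-1)(\mu-1)$; bounding the number of bad vertices is where the valency bound first enters, since a large set of bad common neighbours, together with the vertices they fail to be adjacent to, either overpopulates $\Gamma(x)$ or yields a coclique too large for the $\mu$-bound. Pinning down the threshold so that clique-ness and large-ness both succeed against the exact constants in the hypotheses is the first delicate point.

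Granting this, $[x,y]$ is a line, and it is the unique one through $\{x,y\}$: any maximal clique $L\supseteq\{x,y\}$ of size at least $\lambda+2-(s-1)(\mu-1)$ has $L\setminus\{x,y\}\subseteq\Gamma(x)\cap\Gamma(y)$, and every $v\in L\setminus\{x,y\}$ is adjacent to all the other roughly $\lambda-(s-1)(\mu-1)$ of them, hence $v$ is good and $L\subseteq[x,y]$; maximality then forces $L=[x,y]$. So the lines partition the edge set and every edge lies in exactly one.

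Finally, to bound the number of lines through a vertex $x$: the lines through $x$ partition $\Gamma(x)$, and each contributes at least $\lambda+1-(s-1)(\mu-1)$ of its vertices, so a direct count shows there are at most $k/(\lambda+1-(s-1)(\mu-1))$ of them --- which already gives ``at most $s$'' when $s\leq 2$ (and, for any $s$, when $\mu=1$). For $\mu\geq 2$ and $s\geq 3$ this is too weak, and one must instead show that $s+1$ lines through $x$ force several of them to exceed the minimal size: picking one neighbour of $x$ on each line and estimating the cross-adjacencies through the $\mu$-bound (each such adjacency generating yet another line off $x$) inflates $|\Gamma(x)|$ beyond the value allowed by $k<(s+1)(\lambda+1)-s(s+1)(\mu-1)/2$. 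Carrying this refined count out uniformly in $s$, so that it meets exactly the shape of the valency hypothesis, is the main obstacle; everything before it is inclusion--exclusion on $\lambda$-element common-neighbour sets together with the lone structural input that a non-adjacent pair has at most $\mu$ common neighbours.
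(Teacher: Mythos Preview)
The paper does not prove this proposition at all: it is quoted verbatim as \cite[Result~2.1]{M99} and used as a black box in the proof of Proposition~\ref{prop:omegageometric}. There is therefore nothing in the paper to compare your sketch against.

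For what it is worth, your outline is a fair high-level summary of the Bose--Metsch clique-geometry method, and the two places you flag as delicate are indeed where the work lies in Metsch's original argument. One small inaccuracy: in the final step you write that the lines through $x$ \emph{partition} $\Gamma(x)$, but at that stage you only know that every edge lies in at most one line, so the lines through $x$ are pairwise disjoint subsets of $\Gamma(x)$ whose union need not be all of it; the counting argument still goes through since you only need disjointness for the lower bound on $|\Gamma(x)|$. If you want to actually carry out the proof rather than sketch it, you should consult Metsch's paper directly, since the precise calibration of the ``good neighbour'' threshold and the final counting step are more intricate than your outline suggests.
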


\begin{proposition}\label{prop:omegageometric}
Let $\omega\geq 2$ be an integer and let $\Gamma$ be a $2$-walk-regular graph with valency $k$, diameter $D\geq 2$, and
smallest eigenvalue in the interval $[-\omega, 1-\omega)$. If $a_1> \omega^4 c_2$, then $\Gamma$ is geometric.
\end{proposition}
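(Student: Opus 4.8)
The plan is to apply Metsch's criterion (Proposition~\ref{thm: Metsch}) to $\Gamma$ with $\lambda=a_1$, $\mu=c_2$ (noting that any two non-adjacent vertices in a $2$-walk-regular graph have \emph{at most} $c_2$ common neighbors, with equality at distance $2$), and to choose the parameter $s$ to be related to $\omega$. The natural choice is $s=\omega-1$: since the smallest eigenvalue lies in $[-\omega,1-\omega)$, the coclique bound in the local graph (as in the proof of Lemma~\ref{lem:boundv}) gives $k\leq \omega^2(a_1+1)$, and more to the point, the Delsarte bound (Theorem~\ref{thm: Delsarte 1-walk reg}) says the maximal cliques will have size roughly $k/\omega$, so we expect each vertex to lie in about $\omega$ maximal cliques. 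Thus I would verify the two hypotheses of Proposition~\ref{thm: Metsch}: that $a_1>(2s-1)(c_2-1)-1$ and $k<(s+1)(a_1+1)-s(s+1)(c_2-1)/2$. Both should follow from the assumption $a_1>\omega^4 c_2$ together with $k\leq \omega^2(a_1+1)$ after a routine (but somewhat delicate) calculation; the margin $\omega^4$ is generous enough to absorb the quadratic-in-$s$ term.

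Once Metsch applies, every edge lies in a unique \emph{line} (maximal clique of size at least $a_1+2-(s-1)(c_2-1)$), and every vertex is in at most $s=\omega-1$ lines. The second step is to show that these lines are in fact Delsarte cliques. For this I would count: fix a vertex $x$; it lies in at most $\omega-1$ lines, these lines partition the $k$ edges at $x$ (equivalently, partition $\Gamma(x)$ since each line through $x$ meets $\Gamma(x)$ in the clique minus $x$), and within the local graph $\Delta(x)$ each line-minus-$x$ is a clique whose complement contains no large coclique. A clean way is: the lines through $x$ give a partition of the $k$ neighbors into at most $\omega-1$ cliques of $\Delta(x)$; since $\Delta(x)$ has no coclique of size $\omega$ (otherwise interlacing forces an eigenvalue $<-\omega$), actually I want each clique to be \emph{large}. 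The size of a line is at least $a_1+2-(s-1)(c_2-1) = a_1+2-(\omega-2)(c_2-1)$, which under $a_1>\omega^4 c_2$ is close to $a_1$; combined with $k\le\omega^2(a_1+1)$ and the fact that a clique $C$ containing an edge at $x$ satisfies (by Proposition~\ref{prop: bound w with theta} applied with the idempotent for $\theta_d$, or just by positive-semidefiniteness of the relevant principal submatrix of $E_d$) $|C|\le 1-k/\theta_d \le 1+k/(\omega-1)$, one gets tight enough bounds to force equality. More precisely, I would show that the lines through $x$ must each have size exactly $1-k/\theta_d$: there are at most $\omega-1\le -\theta_d\le\omega$ of them... so I would instead argue that $\theta_d=-\omega$ must hold (if $\theta_d>-\omega$ then $|C|\le 1+k/(\omega-1)$ is too small for the lines to cover all $k$ neighbors of $x$ with at most $\omega-1$ cliques, using the lower bound on line size), and then that the lines have size exactly $1-k/\theta_d=1+k/\omega$, i.e.\ they are Delsarte cliques.

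Having established that each line through any vertex is a Delsarte clique and that the lines partition the edge set (each edge in a unique line), the definition of geometric graph is met, so $\Gamma$ is geometric, completing the proof.

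The main obstacle, I expect, is the bookkeeping in the second step: translating Metsch's conclusion (edges lie in unique ``lines'' of size at least a certain explicit threshold, each vertex in at most $s$ lines) into the statement that these lines have size \emph{exactly} the Delsarte bound $1-k/\theta_d$. This requires simultaneously (i) pinning down $\theta_d=-\omega$, (ii) using the lower bound on line size from Metsch and the upper bound on clique size from the Delsarte inequality to squeeze out equality, and (iii) checking that the $\le s=\omega-1$ lines at a vertex genuinely partition (rather than just cover) its neighborhood — which follows because two distinct lines through $x$ share only $x$ (a common edge would put them in the same unique line). Verifying the numerical hypotheses of Proposition~\ref{thm: Metsch} from $a_1>\omega^4c_2$ and $k\le\omega^2(a_1+1)$ is the other piece requiring care, but the exponent $4$ is clearly chosen to make this go through comfortably.
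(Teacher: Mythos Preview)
Your choice of $s=\omega-1$ in Metsch's criterion does not work, and this is a genuine gap. The inequality you need is $k<(s+1)(a_1+1)-s(s+1)(c_2-1)/2$, which for $s=\omega-1$ gives roughly $k<\omega(a_1+1)$. But the coclique bound in the local graph only yields $k\leq\omega^2(a_1+1)$, and the hypothesis $a_1>\omega^4c_2$ does nothing to bridge this factor of $\omega$. Moreover, even if Metsch \emph{did} apply with $s=\omega-1$, you would reach a contradiction rather than geometry: at most $\omega-1$ lines through $x$ would partition $\Gamma(x)$ into cliques each of size strictly less than $k/(\omega-1)$ (since $-\theta_d>\omega-1$ strictly by hypothesis), covering fewer than $k$ neighbors. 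So the ``squeeze'' you describe in step (ii) cannot close with $s=\omega-1$; the correct count is that each vertex lies in exactly $\omega$ Delsarte cliques, forcing $\theta_d=-\omega$.

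The paper's proof is a two-pass bootstrap that your sketch misses. First apply Metsch with the crude value $s=\omega^2$ (which \emph{is} verifiable from $k\leq\omega^2(a_1+1)$ and $a_1>\omega^4c_2$); this shows lines exist, have size exceeding $\omega^2+1$, and every vertex lies in at most $\omega^2$ of them. A flag count then shows there are more vertices than lines, so the vertex-line incidence matrix $N$ has $NN^\top=A+D$ singular, where $D$ is diagonal with entries $M_x$ (the number of lines through $x$). Analysing a null vector of $NN^\top$ against the bounds $z^\top Az\geq-\omega\|z\|^2$ and $z^\top Dz\geq\omega\|z\|^2$ (the latter from $M_x\geq\omega$, which follows from the Delsarte bound) forces equality somewhere, producing a vertex with exactly $\omega$ lines through it. This yields the improved bound $k\leq\omega(a_1+1)$, after which one runs Metsch a second time with $s=\omega$ to conclude that every vertex lies in exactly $\omega$ lines, each a Delsarte clique. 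The incidence-matrix singularity argument is the key idea you are missing.
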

\begin{proof}
Let $s=\omega^2$, $\lambda=a_1$, $\mu=c_2$, and define a line as in Proposition~\ref{thm: Metsch}. Using that $k\leq
\omega^2(a_1+1)$, we can check that the conditions of Proposition~\ref{thm: Metsch} are fulfilled. So, every vertex is
in at most $\omega^2$ lines and every edge is in a unique line, where the size of a line is at least
$$
a_1+2-(\omega^2-1)(c_2-1)>\omega^4c_2-(\omega^2-1)(c_2-1)=\omega^2(\omega^2c_2-c_2+1)+c_2-1\geq \omega^2+1.
$$
From Theorem~\ref{thm: Delsarte 1-walk reg}, we know that the size of a clique is less than $1+\frac{k}{\omega-1}$. Let
$M_x$ be the number of lines through the vertex $x$ and let $\omega_x+1$ be the average size of these lines. Then
$M_x=\frac{k}{\omega_x}> \frac{k}{k/(\omega-1)}=\omega-1$, hence
\begin{equation}\label{eq: bound number lines}
M_x \geq \omega.
\end{equation}
Let $\mathcal{C}$ denote the set of lines, and let $f$ be the number of flags (incident vertex-line pairs) $(x,C)$. By
counting these pairs in two ways, we obtain that $|V|\omega^2\geq f \geq |\mathcal{C}|(\omega^2+1)$, and hence there
are more vertices than lines.

Now let $N$ be the vertex-line incidence matrix of $\Gamma$. Then $NN^{\top}=A+D$, where $D$ is a diagonal matrix with
$D_{xx}=M_x$; moreover, $NN^{\top}$ is singular. Because $z^{\top}Az\geq -\omega$ and $z^{\top}Dz \geq \omega$ for
every vector $z$ of length $1$, it follows that if $z$ is an eigenvector of $NN^{\top}$ with eigenvalue $0$, then
equality holds in both inequalities. In particular, this implies that there is a vertex $x$ with $M_x=\omega$. It
follows that for this vertex $x$ we have $\omega_x=\frac{k}{\omega}$ (and all lines
through $x$ are Delsarte cliques), hence $a_1\geq \frac{k}{\omega}-1$, or equivalently, $k\leq \omega(a_1+1)$.

Now that we have a better bound for $k$, we can apply again Proposition~\ref{thm: Metsch}, but this time we can set
$s=\omega$. Now every vertex is in at most $\omega$ lines, so we must have equality in \eqref{eq: bound number lines}.
Thus, every vertex is in exactly $\omega$ lines, and hence every line is a Delsarte clique. So $\G$ is geometric.
\end{proof}

As a consequence of Theorem \ref{thm:bangkoolen} and Proposition \ref{prop:omegageometric}, we have the following
result.

\begin{theorem}\label{thm: finitely many non-geometric}
Let $\omega\geq 2$ and $D\geq 3$. There are finitely many non-geometric $2$-walk-regular graphs with diameter $D$ and
smallest eigenvalue at least $-\omega$.
\end{theorem}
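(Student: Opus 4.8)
The plan is to split the class of $2$-walk-regular graphs with smallest eigenvalue at least $-\omega$ and diameter $D$ into two subfamilies according to the relative size of $a_1$ and $c_2$, and to bound each subfamily separately. Concretely, fix the threshold $\epsilon := 1/\omega^4$ (the exact constant is not important; any $0<\epsilon<1$ small enough to match Proposition~\ref{prop:omegageometric} will do). First I would consider the graphs with $c_2 \geq a_1 \epsilon$: by Theorem~\ref{thm:bangkoolen} applied with this $\epsilon$, the valency $k$ is bounded by an explicit function of $\omega$ and $D$, so there are only finitely many such graphs (a graph of diameter $D$ and valency $k$ has at most $1+k+k(k-1)+\cdots$ vertices, hence finitely many isomorphism types once $k$ and $D$ are fixed). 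Since the theorem makes no claim of geometricity here, these finitely many graphs are simply absorbed into the "finitely many exceptions".

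Second I would consider the graphs with $c_2 < a_1 \epsilon$, i.e.\ $a_1 > \omega^4 c_2$. Here Proposition~\ref{prop:omegageometric} applies directly: if in addition the smallest eigenvalue lies in $[-\omega, 1-\omega)$, the graph is geometric, so it contributes nothing to the non-geometric count. The only remaining case to dispose of is when the smallest eigenvalue $\theta_d$ lies in $[1-\omega, 0)$, i.e.\ $-\omega < \theta_d$; but then, by Theorem~\ref{thm: Delsarte 1-walk reg}, every clique of $\Gamma$ has at most $1 - k/\theta_d < 1 + k/(\omega-1)$ vertices, and in particular a triangle gives $a_1 + 2 \leq 1 - k/\theta_d$, so $k \leq -\theta_d(a_1+1) < (\omega-1)(a_1+1)$. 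Combined with $a_1 > \omega^4 c_2 \geq \omega^4$ (since $c_2 \geq 1$ whenever $D \geq 2$), one can run Metsch's result (Proposition~\ref{thm: Metsch}) with $s = \omega - 1$ in place of $\omega^2$; the inequalities $\lambda > (2s-1)(\mu-1)-1$ and $k < (s+1)(\lambda+1) - s(s+1)(\mu-1)/2$ are then comfortably satisfied because $a_1$ dominates $c_2$ so strongly, yielding that each edge lies in a unique line of size at least $a_1 + 2 - (\omega-2)(c_2-1) \geq \omega$, whence by the same flag-counting and idempotent argument as in Proposition~\ref{prop:omegageometric} the graph is geometric. So in fact the restriction $\theta_d < 1-\omega$ in Proposition~\ref{prop:omegageometric} costs us nothing, and every graph in the second subfamily is geometric.

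Putting the two parts together: every non-geometric $2$-walk-regular graph with diameter $D$ and smallest eigenvalue at least $-\omega$ must fall in the first subfamily (otherwise it is geometric by the above), and that subfamily is finite. Hence there are finitely many non-geometric such graphs.

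The main obstacle I anticipate is the boundary case $\theta_d \in [1-\omega,0)$: Proposition~\ref{prop:omegageometric} as stated does not cover it, and one must re-examine Metsch's constants to check that the Delsarte bound $k < 1 + k/(\omega-1)$ — which is genuinely weaker than the $k \leq \omega^2(a_1+1)$ bound used in the main case — is still strong enough, together with $a_1 > \omega^4 c_2$, to force each edge into a unique line and to make every line a Delsarte clique. This is a routine but slightly delicate verification of polynomial inequalities in $\omega$, $a_1$, $c_2$; the large gap between $a_1$ and $c_2$ is what makes it go through. (Alternatively, one could avoid this case analysis entirely by strengthening Proposition~\ref{prop:omegageometric} to allow $\theta_d \in [-\omega,0)$, which amounts to the same computation.)
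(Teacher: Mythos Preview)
Your overall strategy --- split at the threshold $\epsilon = 1/\omega^4$, apply Theorem~\ref{thm:bangkoolen} when $c_2 \geq \epsilon a_1$ and Proposition~\ref{prop:omegageometric} when $a_1 > \omega^4 c_2$ --- is exactly the paper's argument (the paper states the theorem as an immediate consequence of those two results). You are also right to notice that Proposition~\ref{prop:omegageometric} only literally covers $\theta_d \in [-\omega,\,1-\omega)$; the paper glosses over this.

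However, your treatment of the boundary case $\theta_d \in [1-\omega,0)$ contains a genuine error. You write that ``a triangle gives $a_1 + 2 \leq 1 - k/\theta_d$'' and deduce $k \leq -\theta_d(a_1+1)$. This is false: the Delsarte bound (Theorem~\ref{thm: Delsarte 1-walk reg}) bounds the size of an \emph{existing} clique, but in a $2$-walk-regular graph there is no reason for a clique of size $a_1+2$ to exist --- the $a_1$ common neighbours of an edge need not be mutually adjacent. So the inequality $k < (\omega-1)(a_1+1)$ is unjustified, and your re-run of Metsch with $s=\omega-1$ rests on it.

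The clean fix is much simpler than what you attempt. Since $D\geq 3$ forces $\theta_d < -1$ (a connected non-complete regular graph has smallest eigenvalue $<-1$; alternatively, the $K_{1,2}$ induced on $x$ and two non-adjacent neighbours interlaces), set $\omega' := \lceil -\theta_d \rceil$. Then $2 \leq \omega' \leq \omega$ and $\theta_d \in [-\omega',\,1-\omega')$, so Proposition~\ref{prop:omegageometric} applies verbatim with $\omega'$ in place of $\omega$: the hypothesis $a_1 > (\omega')^4 c_2$ follows from $a_1 > \omega^4 c_2 \geq (\omega')^4 c_2$. No new Metsch computation is needed.
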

Let us remark that we need to fix both $\omega$ and $D$ for the finiteness. Conder and Nedela
\cite[Prop.~2.5]{ConderNedela} constructed infinitely many $3$-arc-transitive cubic graphs with girth $11$. Because a
geometric graph without triangles must be bipartite, this shows that there are infinitely many non-geometric
$3$-walk-regular graphs with smallest eigenvalue larger than $-3$. To show that we need to fix $\omega$, we consider
the symmetric bilinear forms graph. This graph has as vertices the symmetric $n \times n$ matrices over $\F_q$, where
two vertices are adjacent if their difference has rank $1$; see \cite[Sec.~9.5.D]{bcn89}. For $q$ even and $n \geq 4$,
this graph is not distance-regular, but it is $2$-walk-regular. For $n=4$, these graphs have diameter $5$, and one can
show using the distance-distribution diagram (see \cite[p.~22]{BCNcoradd}) that the smallest eigenvalue equals
$-1-q^3$. Because the valency equals $q^4-1$, this graph cannot be geometric, even though there are `lines' of size
$q$, but these are not Delsarte cliques. Note that also for $n>4$ (and $q$ even), we can show that the symmetric bilinear forms graph is not geometric, but we omit the proof, because it is rather involved.

On the other hand, we need $2$-walk-regularity, because the earlier mentioned $2$-coclique extensions of the lattice
graphs provide an infinite family of non-geometric $1$-walk-regular graphs with diameter $2$ and smallest eigenvalue
$-4$. Theorem~\ref{thm: finitely many non-geometric} thus illustrates once more the important structural gap between
$1$- and $2$-walk-regular graphs.

Note finally that a geometric graph $\G$ is the point graph of the partial linear space of vertices and (some) Delsarte
cliques, and that one can consider also the dual graph on the cliques, that is, the point graph of the dual of this
partial linear space. In particular when $\G$ is locally a disjoint union of cliques (i.e., when
$k=-\theta_d(a_1+1)$), this can be used to obtain new examples of $t$-walk-regular graphs, in the same spirit as in
Proposition \ref{prop: line graph t-w-r}, although now one has to consider the so-called geometric girth instead of the
usual girth. For example, the Hamming graphs have geometric girth $4$ (as $c_2>1$), and the dual graphs of the Hamming
graph (with diameter at least three) are only $1$-walk-regular. The distance-regular near octagon coming from the
Hall-Janko group (see \cite[Sec.~13.6]{bcn89}) has geometric girth $6$ and its dual is $2$-walk-regular.

We finish by observing that besides distance-regular graphs and the above mentioned symmetric bilinear forms graphs, we do not know of many examples of $2$-walk-regular graphs with $c_2 \geq 2$. We challenge the reader to construct more
such examples.

\section*{Acknowledgments}

The authors thank the referee for his careful reading and helpful comments and Akihiro Munemasa for pointing the authors to the fundamental bound.

\bibliographystyle{abbrv}

\bibliography{bibfile}
\end{document}